\newtheorem{theo}{Theorem}
\newtheorem*{crit}{Criterion for univariate mirror maps}
\newtheorem*{theo2}{Theorem}
\newtheorem*{propo2}{Landau's criterion}
\newtheorem{lemme}{Lemma}
\theoremstyle{remark}
\newtheorem*{Remarque}{Remark}
\newtheorem*{Remarques}{Remarks}
\numberwithin{equation}{section}
\author{E. DELAYGUE}
\title{Criterion for the integrality of the Taylor coefficients of mirror maps in several variables}
\date{}
\begin{document}
\maketitle

\begin{abstract}
We give a necessary and sufficient condition for the integrality of the Taylor coefficients at the origin of formal power series $q_i({\mathbf z})=z_i\exp(G_i({\mathbf z})/F({\mathbf z}))$, with ${\mathbf z}=(z_1,\dots,z_d)$ and where $F({\mathbf z})$ and $G_i({\mathbf z})+\log(z_i)F({\mathbf z})$, $i=1,\dots,d$ are particular solutions of certain $A$-systems of differential equations. This criterion is based on the analytical properties of Landau's function (which is classically associated with the sequences of factorial ratios) and it generalizes the criterion in the case of one variable presented in ``Critère pour l'intégralité des coefficients de Taylor des applications miroir'' [J.~Reine Angew. Math.]. One of the techniques used to prove this criterion is a generalization of a version of a theorem of Dwork on the formal congruences between formal series, proved by Krattenthaler and Rivoal in ``Multivariate $p$-adic formal congruences and integrality of Taylor coefficients of mirror maps'' [arXiv:0804.3049v3, math.NT]. This criterion involves the integrality of the Taylor coefficients of new univariate mirror maps listed in ``Tables of Calabi--Yau equations'' [arXiv:math/0507430v2, math.AG] by Almkvist, van Enckevort, van Straten and Zudilin. 
\end{abstract}

\section{Introduction}

The mirror maps considered in this article are formal series of $d$ variables $z_i(x_1,\dots,x_d)$, $i=1,\dots,d$, such that the map 
$$
(x_1,\dots,x_d)\mapsto(z_1(x_1,\dots,x_d),\dots,z_d(x_1,\dots,x_d))
$$
is the compositional inverse of the map
$$
(y_1,\dots,y_d)\mapsto(q_1(y_1,\dots,y_d),\dots,q_d(y_1,\dots,y_d)),
$$ 
with, writing ${\mathbf y}=(y_1,\dots,y_d)$, $q_i({\mathbf y})=y_i\exp(G_i({\mathbf y})/F({\mathbf y}))$ for $i=1,\dots,d$ and where $F({\mathbf y})$ and $G_i({\mathbf y})+\log(y_i)F({\mathbf y})$ are particular solutions of a certain $A$-system of linear differential equations. These objects are geometric in nature because the series $F({\mathbf y})$ are $A$-hypergeometric functions (\footnote{The $A$-hypergeometric series are also called GKZ hypergeometric series. See \cite{Stienstra} for an introduction to these series, which generalize the classic hypergeometric series in the multivariate case.}) which can be viewed as the period of certain multi-parameter families of algebraic varieties in a product of weighted projective spaces (see \cite{Hosono} for details). 
\medskip

A classic example of multivariate mirror maps, studied in \cite{Batyrev}, \cite{Stienstra} and \cite{Tanguy 2} is related to the series
\begin{equation}\label{exchange1}
F(z_1,z_2)=\sum_{m,n\geq 0}\frac{(3m+3n)!}{m!^3n!^3}z_1^mz_2^n
\end{equation}
which is solution of the system of differential equations
\[\left\{ 
\begin{array}{clcr}
D_1^3y-z_1\left(3D_1+3D_2+1\right)\left(3D_1+3D_2+2\right)\left(3D_1+3D_2+3\right)y=0,\\
D_2^3y-z_2\left(3D_1+3D_2+1\right)\left(3D_1+3D_2+2\right)\left(3D_1+3D_2+3\right)y=0,
\end{array}
\right.
\]
where $D_1=z_1\frac{d}{dz_1}$ and $D_2=z_2\frac{d}{dz_2}$. We find two other solutions of this system $G_1(z_1,z_2)+\log(z_1)F(z_1,z_2)$ and $G_2(z_1,z_2)+\log(z_2)F(z_1,z_2)$ where
$$
G_1(z_1,z_2)=\sum_{m,n\geq 0}\frac{(3m+3n)!}{m!^3n!^3}(3H_{3m+3n}-3H_m)z_1^mz_2^n
$$
and 
$$
G_2(z_1,z_2)=\sum_{m,n\geq 0}\frac{(3m+3n)!}{m!^3n!^3}(3H_{3m+3n}-3H_n)z_1^mz_2^n.
$$
This set of solutions enables us to define two \textit{canonical coordinates} 
$$
q_1(z_1,z_2)=z_1\exp(G_1(z_1,z_2)/F(z_1,z_2))\quad\textup{and}\quad q_2(z_1,z_2)=z_2\exp(G(z_1,z_2)/F(z_1,z_2)).
$$
The associated \textit{mirror maps} are defined by the formal series $z_1(q_1,q_2)$ and $z_2(q_1,q_2)$ such that the map $(q_1,q_2)\mapsto(z_1(q_1,q_2),z_2(q_1,q_2))$ is the compositional inverse of the map $(z_1,z_2)\mapsto(q_1(z_1,z_2),q_2(z_1,z_2))$.

According to the Corollary $1$ from \cite{Tanguy 2}, the series $q_1(z_1,z_2)$, $q_2(z_1,z_2)$, $z_1(q_1,q_2)$ and $z_2(q_1,q_2)$ have integral Taylor coefficients. 
\medskip

Mirror maps are of interest in Mathematical Physics and Algebraic Geometry. Particularly, within Mirror Symmetry Theory, it has been observed that the Taylor coefficients of mirror maps are integers. This surprising observation has led to the study of these objects within Number Theory, which has led to its proof in many cases (see further down in the introduction). The aim of this article is to establish a necessary and sufficient condition for the integrality of all the Taylor coefficients of mirror maps defined by ratios of factorials of linear forms.

\subsection{Definition of mirror maps}

In order to define the mirror maps involved in this article, we introduce some standard multi-index notation, which we use throughout the article. Namely, given a positive integer $d$, $k\in\{1,\dots,d\}$ and vectors ${\mathbf m}:=(m_1,\dots,m_d)$ and ${\mathbf n}:=(n_1,\dots,n_d)$ in $\mathbb{R}^d$, we write ${\mathbf m}\cdot{\mathbf n}$ for the scalar product $m_1n_1+\cdots+m_dn_d$ and ${\mathbf m}^{(k)}$ for $m_k$. We write ${\mathbf m}\geq{\mathbf n}$ if and only if $m_i\geq n_i$ for all $i\in\{1,\dots,d\}$. In addition, if ${\mathbf z}:=(z_1,\dots,z_d)$ is a vector of variables and if ${\mathbf n}:=(n_1,\dots,n_d)\in\mathbb{Z}^d$, then we write ${\mathbf z}^{{\mathbf n}}$ for the product $z_1^{n_1}\cdots z_d^{n_d}$. Finally, we write ${\mathbf 0}$ for the vector $(0,\dots,0)\in\mathbb{Z}^d$.

Given two sequences of vectors in $\mathbb{N}^d$ $e:=({\mathbf e}_1,\dots,{\mathbf e}_{q_1})$ and $f:=({\mathbf f}_1,\dots,{\mathbf f}_{q_2})$ , we write $|e|:=\sum_{i=1}^{q_1}{\mathbf e}_i$ and $|f|:=\sum_{i=1}^{q_2}{\mathbf f}_i\in\mathbb{N}^d$ so that, for all $k\in\{1,\dots,d\}$, we have $|e|^{(k)}=\sum_{i=1}^{q_1}\mathbf{e}_i^{(k)}$ and $|f|^{(k)}=\sum_{i=1}^{q_2}\mathbf{f}_i^{(k)}$. For all $\mathbf{n}\in\mathbb{N}^d$, we write
$$
\mathcal{Q}_{e,f}({\mathbf n}):=\frac{({\mathbf e}_1\cdot{\mathbf n})!\cdots({\mathbf e}_{q_1}\cdot{\mathbf n})!}{({\mathbf f}_1\cdot{\mathbf n})!\cdots({\mathbf f}_{q_2}\cdot{\mathbf n})!}.
$$ 
We define the formal series
$$
F_{e,f}({\mathbf z}):=\sum_{{\mathbf n}\geq {\mathbf 0}}\frac{({\mathbf e}_1\cdot{\mathbf n})!\cdots({\mathbf e}_{q_1}\cdot{\mathbf n})!}{({\mathbf f}_1\cdot{\mathbf n})!\cdots({\mathbf f}_{q_2}\cdot{\mathbf n})!}{\mathbf z}^{{\mathbf n}}
$$
and
\begin{equation}\label{definitionG}
G_{e,f,k}({\mathbf z}):=\sum_{{\mathbf n}\geq{\mathbf 0}}\frac{({\mathbf e}_1\cdot{\mathbf n})!\cdots({\mathbf e}_{q_1}\cdot{\mathbf n})!}{({\mathbf f}_1\cdot{\mathbf n})!\cdots({\mathbf f}_{q_2}\cdot{\mathbf n})!}\left(\sum_{i=1}^{q_1}{\mathbf e}_i^{(k)}H_{{\mathbf e}_i\cdot{\mathbf n}}-\sum_{j=1}^{q_2}{\mathbf f}_j^{(k)}H_{{\mathbf f}_j\cdot{\mathbf n}}\right){\mathbf z}^{{\mathbf n}},
\end{equation}
where $k\in\{1,\dots,d\}$ and, for all $m\in\mathbb{N}$, $H_m:=\sum_{i=1}^{m}\frac{1}{i}$ is the $m$-th harmonic number. The series $F_{e,f}({\mathbf z})$ is a $A$-hypergeometric series and is therefore a solution of a $A$-system of linear differential equations. In some cases, we find $d$ additional solutions of this system together with at most logarithmic singularities at the origin, the $G_{e,f,k}({\mathbf z})+\log(z_k)F({\mathbf z})$ for $k\in\{1,\dots,d\}$. 

In the context of mirror symmetry, when $|e|=|f|$, the $d$ functions 
$$
q_{e,f,k}({\mathbf z}):=z_k\exp(G_{e,f,k}({\mathbf z})/F_{e,f}({\mathbf z})),\,k\in\{1,\dots,d\},
$$ 
are \textit{canonical coordinates}. The compositional inverse of the map 
$$
{\mathbf z}\mapsto(q_{e,f,1}({\mathbf z}),\dots,q_{e,f,d}({\mathbf z}))
$$
defines the vector $(z_{e,f,1}({\mathbf q}),\dots,z_{e,f,d}({\mathbf q}))$ of \textit{mirror maps}.

The aim of this article is to establish a necessary and sufficient condition for the integrality of the coefficients of the $d$ mirror maps $z_{e,f,k}({\mathbf q})$, that is, to determine under which conditions, for all $k\in\{1,\dots,d\}$, we have $z_{e,f,k}({\mathbf q})\in\mathbb{Z}[[{\mathbf q}]]$. In the context of Number Theory of this article, the mirror map $z_{e,f,k}({\mathbf q})$ and the corresponding canonical coordinate $q_{e,f,k}({\mathbf z})$ play strictly the same role because, for all $k\in\{1,\dots,d\}$, we have $q_{e,f,k}({\mathbf z})\in z_k\mathbb{Z}[[{\mathbf z}]]$ if and only if, for all $k\in\{1,\dots,d\}$, we have $z_{e,f,k}({\mathbf q})\in q_k\mathbb{Z}[[{\mathbf q}]]$ (see \cite[Partie 1.2]{Tanguy 2}). Therefore, we shall formulate the criterion exclusively for canonical coordinate but it also holds for the corresponding mirror maps.

\subsection{Statement of the criterion}\label{\'Enoncé du critère}

Before stating the criterion for the integrality of the Taylor coefficients of $q_{e,f,k}({\mathbf z})$, we recall the definition of \textit{Landau's function} associated with a ratio of factorials of linear forms. Given two sequences of vectors in $\mathbb{N}^d$ $e:=({\mathbf e}_1,\dots,{\mathbf e}_{q_1})$ and $f:=({\mathbf f}_1,\dots,{\mathbf f}_{q_2})$ , we write $\Delta_{e,f}$ the Landau's function associated with $\mathcal{Q}_{e,f}$, which is defined, for all ${\mathbf x}\in\mathbb{R}^d$, by
$$
\Delta_{e,f}({\mathbf x}):=\sum_{i=1}^{q_1}\lfloor{\mathbf e}_i\cdot{\mathbf x}\rfloor-\sum_{j=1}^{q_2}\lfloor{\mathbf f}_j\cdot{\mathbf x}\rfloor,
$$
where $\lfloor\cdot\rfloor$ denotes the floor function. We also write $\{\cdot\}$ for the fractional part function. We still write $\lfloor\cdot\rfloor$, respectively $\{\cdot\}$, for the function defined, for all ${\mathbf x}=(x_1,\cdots,x_d)\in\mathbb{R}^d$, by $\lfloor{\mathbf x}\rfloor:=(\lfloor x_1\rfloor,\cdots,\lfloor x_d\rfloor)$, respectively by $\{{\mathbf x}\}:=(\{x_1\},\cdots,\{x_d\})$. For all ${\mathbf c}\in\mathbb{N}^d$, we have $\lfloor{\mathbf c}\cdot{\mathbf x}\rfloor=\lfloor{\mathbf c}\cdot\{{\mathbf x}\}\rfloor+{\mathbf c}\cdot\lfloor{\mathbf x}\rfloor$ and therefore $\Delta_{e,f}({\mathbf x})=\Delta_{e,f}(\{{\mathbf x}\})+(|e|-|f|)\cdot\lfloor{\mathbf x}\rfloor$. So, we have $|e|=|f|$ if and only if $\Delta_{e,f}$ is $1$-periodic in each of its variables. We write $\mathcal{D}_{e,f}$ for the semi-algebraic set of all ${\mathbf x}\in [0,1[^d$ such that there exists ${\mathbf d}\in\{{\mathbf e}_1,\cdots,{\mathbf e}_{q_1},{\mathbf f}_1,\cdots,{\mathbf f}_{q_2}\}$ verifying ${\mathbf d}\cdot{\mathbf x}\geq 1$. The set $[0,1[^d\setminus\mathcal{D}_{e,f}$ is nonempty and the function $\Delta_{e,f}$ vanishes on $[0,1[^d\setminus\mathcal{D}_{e,f}$. The following proposition shows that the Landau's function provides a characterization of the sequences $e$ and $f$ such that, for all ${\mathbf n}\in\mathbb{N}^d$, $\mathcal{Q}_{e,f}({\mathbf n})$ is an integer.

\begin{propo2}\label{critère Landau}
Let $e$ and $f$ be two sequences of vectors in $\mathbb{N}^d$. We have the following dichotomy.
\begin{itemize}
\item[$(i)$] If, for all ${\mathbf x}\in [0,1]^d$, we have $\Delta_{e,f}({\mathbf x})\geq 0$, then, for all ${\mathbf n}\in\mathbb{N}^d$, we have $\mathcal{Q}_{e,f}({\mathbf n})\in\mathbb{N}$.
\item[$(ii)$] If there exists ${\mathbf x}\in [0,1]^d$ such that $\Delta_{e,f}({\mathbf x})\leq -1$, then there are only finitely many prime numbers $p$ such that all terms of the family $\mathcal{Q}_{e,f}$ are in $\mathbb{Z}_p$.
\end{itemize}
\end{propo2}

\begin{Remarque}
Assertion $(i)$ is a result of Landau from \cite{Landau}: he has proved that it is in fact a necessary and sufficient condition. We prove Landau's criterion assertion $(ii)$ in Section \ref{section demLandau}.
\end{Remarque} 

In literature, one can distinguish several results proving the integrality of the Taylor coefficients of univariate mirror maps (\textit{i.e.} $d=1$) when $|e|=|f|$. One can find them, in an increasing order of generality, in \cite{Lian 2}, \cite{Zudilin}, \cite{Tanguy} and \cite{Delaygue}. Refer to the introduction from \cite{Delaygue} for a detailed statement of all these results. In the univariate case, the most general result builds up a criterion for the integrality of the Taylor coefficients of mirror maps defined by sequences of ratios of factorials. According to the notations of this article, it reads as follows:

\begin{crit}[Theorem 1 from \cite{Delaygue}] 
Let $e$ and $f$ be two disjoint sequences of positive integers such that $\mathcal{Q}_{e,f}$ is a sequence of integers (which is equivalent to $\Delta_{e,f}\geq 0$ on $[0,1]$) and which satisfy $|e|=|f|$. Then, we have the following dichotomy.
\begin{itemize}
\item[$(i)$] If, for all $x\in\mathcal{D}_{e,f}$, we have $\Delta_{e,f}(x)\geq 1$, then $q_{e,f,1}(z)\in z\mathbb{Z}[[z]]$.
\item[$(ii)$] If there exists $x\in\mathcal{D}_{e,f}$ such that $\Delta_{e,f}(x)=0$, then there are only finitely many prime numbers $p$ such that $q_{e,f,1}(z)\in z\mathbb{Z}_p[[z]]$.
\end{itemize}
\end{crit}

In the multivariate case, Krattenthaler and Rivoal proved in \cite{Tanguy 2} the integrality of the Taylor coefficients of mirror maps belonging to large infinite families. In order to state this result, for all $k\in\{1,\dots,d\}$, we write ${\mathbf 1}_k$ for the vector in $\mathbb{N}^d$, all coordinates of which equal to zero except the $k$-th which is equal to $1$.

\begin{theo2}[Corollary 1 from \cite{Tanguy 2}]
Let $e$ and $f$ be two sequences of vectors in $\mathbb{N}^d$ verifying $|e|=|f|$ and such that $f$ is only composed of vectors of the form ${\mathbf 1}_k$ with $k\in\{1,\dots,d\}$. Then, for all $k\in\{1,\dots,d\}$, we have $q_{e,f,k}({\mathbf z})\in z_k\mathbb{Z}[[{\mathbf z}]]$.
\end{theo2}

The purpose of this article is to prove the following theorems, which provide a characterization of the multivariate mirror maps, associated with integral ratios of factorials of linear forms and all the Taylor coefficients of which are integers. We prove in Section \ref{partie comp ant} that they contain the results of other authors who worked on this subject previously. First, we consider the case $|e|=|f|$ and then we state the results when there exists $k\in\{1,\dots,d\}$ such that $|e|^{(k)}>|f|^{(k)}$. When there exists $k\in\{1,\dots,d\}$ such that $|e|^{(k)}<|f|^{(k)}$, the family $\mathcal{Q}_{e,f}$ has a term that is not an integer and the question of the integrality of the Taylor coefficients of $q_{e,f,k}(\mathbf{z})$ is still open.

\begin{theo}\label{critère}
Let $e$ and $f$ be two disjoint sequences of nonzero vectors in $\mathbb{N}^d$ such that $\mathcal{Q}_{e,f}$ is a family of integers (equivalent to $\Delta_{e,f}\geq 0$ on $[0,1]^d$) and which satisfy $|e|=|f|$. Then we have the following dichotomy.
\begin{itemize}
\item[$(i)$] If, for all ${\mathbf x}\in\mathcal{D}_{e,f}$, we have $\Delta_{e,f}({\mathbf x})\geq 1$, then, for all $k\in\{1,\dots,d\}$, we have $q_{e,f,k}({\mathbf z})\in z_k\mathbb{Z}[[{\mathbf z}]]$.
\item[$(ii)$] If there exists ${\mathbf x}\in\mathcal{D}_{e,f}$ such that $\Delta_{e,f}({\mathbf x})=0$, then there exists $k\in\{1,\dots,d\}$ such that there are only finitely many prime numbers $p$ such that $q_{e,f,k}({\mathbf z})\in z_k\mathbb{Z}_p[[{\mathbf z}]]$.
\end{itemize}
\end{theo}

\begin{Remarques}
\begin{itemize}
\item Note the similarity between Landau's criterion and Theorem \ref{critère}.
\item We assume that the terms of the sequences $e$ and $f$ are nonzero and that these sequences are disjoint in order to rule out the possibility that $\Delta_{e,f}$ vanish identically, which corresponds to the formal series $F_{e,f}({\mathbf z})=(1-z_1)^{-1}\cdots(1-z_d)^{-1}$, $G_{e,f,k}({\mathbf z})=0$ and $q_{e,f,k}({\mathbf z})=z_k$.
\item Assertion $(ii)$ of Theorem \ref{critère} is optimal as, if $\Delta_{e,f}$ vanishes on $\mathcal{D}_{e,f}$ and if $d\geq 2$, then there may exist $k\in\{1,\dots,d\}$ such that $q_{e,f,k}({\mathbf z})\in z_k\mathbb{Z}[[{\mathbf z}]]$. Indeed, if one chooses $d=2$, $e=((3,0))$ and $f=((2,0),(1,0))$. Then we have $\mathcal{D}_{e,f}=\{(x_1,x_2)\in[0,1[^2\,:\,x_1\geq 1/3\}$, $\Delta_{e,f}((1/2,0))=0$ and $q_{e,f,2}({\mathbf z})=z_2$.  
\item Theorem \ref{critère} generalizes the criterion for univariate mirror maps and Corollary 1 from \cite{Tanguy 2} (see Section \ref{partie comp ant}). 
\end{itemize}
\end{Remarques}

We will now state a criterion for the integrality of the Taylor coefficients of \textit{mirror-type} maps $q_{{\mathbf L},e,f}$ defined, for all ${\mathbf L}\in\mathbb{N}^d$, by $q_{{\mathbf L},e,f}({\mathbf z}):=\exp(G_{{\mathbf L},e,f}({\mathbf z})/F_{e,f}({\mathbf z}))$, where $G_{{\mathbf L},e,f}$ is the formal power series 
\begin{equation}\label{definitionG L}
G_{{\mathbf L},e,f}({\mathbf z}):=\sum_{{\mathbf n}\geq{\mathbf 0}}\frac{({\mathbf e}_1\cdot{\mathbf n})!\cdots({\mathbf e}_{q_1}\cdot{\mathbf n})!}{({\mathbf f}_1\cdot{\mathbf n})!\cdots({\mathbf f}_{q_2}\cdot{\mathbf n})!}H_{{\mathbf L}\cdot{\mathbf n}}\,{\mathbf z}^{{\mathbf n}}.
\end{equation}

We write $\mathcal{E}_{e,f}$ for the set of all ${\mathbf L}\in\mathbb{N}^d\setminus\{{\mathbf 0}\}$ such that there is a ${\mathbf d}\in\{{\mathbf e}_1,\dots,{\mathbf e}_{q_1},{\mathbf f}_1,\dots,{\mathbf f}_{q_2}\}$ satisfying ${\mathbf L}\leq{\mathbf d}$. We have $q_{{\mathbf L},e,f}({\mathbf z})\in 1+\sum_{j=1}^dz_j\mathbb{Q}[[{\mathbf z}]]$ and 
\begin{equation}\label{retouche}
z_k^{-1}q_{e,f,k}({\mathbf z})=\left(\prod_{i=1}^{q_1}\big(q_{{\mathbf e}_i,e,f}({\mathbf z})\big)^{{\mathbf e}_i^{(k)}}\right)/\left(\prod_{j=1}^{q_2}\big(q_{{\mathbf f}_j,e,f}({\mathbf z})\big)^{{\mathbf f}_j^{(k)}}\right),
\end{equation}
so that if, for all ${\mathbf L}\in\mathcal{E}_{e,f}$, we have $q_{{\mathbf L},e,f}({\mathbf z})\in\mathbb{Z}[[\mathbf{z}]]$, then, for all $k\in\{1,\dots,d\}$, we have $q_{e,f,k}(\mathbf{z})\in z_k\mathbb{Z}[[{\mathbf z}]]$. Thus, assertion $(i)$ of Theorem \ref{critère2} implies assertion $(i)$ of Theorem~\ref{critère}. Assertion $(ii)$ of Theorem \ref{critère2} adds details to assertion $(ii)$ of Theorem \ref{critère}. To be more precise, it proves that there exists $k\in\{1,\dots,d\}$ such that $q_{e,f,k}(\mathbf{z})\notin z_k\mathbb{Z}[[\mathbf{z}]]$ and that all the mirror-type maps indeed involved in \eqref{retouche} have at least one Taylor coefficient which is not an integer. Thus Theorem \ref{critère} can be seen as a corollary of Theorem \ref{critère2}.

\begin{theo}\label{critère2}
Let $e$ and $f$ be two disjoint sequences of nonzero vectors in $\mathbb{N}^d$ such that $\mathcal{Q}_{e,f}$ is a family of integers (which is equivalent to $\Delta_{e,f}\geq 0$ on $[0,1]^d$) and which satisfy $|e|=|f|$. Then we have the following dichotomy.
\begin{itemize}
\item[$(i)$] If, for all $\mathbf{x}\in\mathcal{D}_{e,f}$, we have $\Delta_{e,f}({\mathbf x})\geq 1$, then, for all ${\mathbf L}\in\mathcal{E}_{e,f}$, we have $q_{{\mathbf L},e,f}({\mathbf z})\in\mathbb{Z}[[{\mathbf z}]]$.
\item[$(ii)$] If there exists ${\mathbf x}\in\mathcal{D}_{e,f}$ such that $\Delta_{e,f}({\mathbf x})=0$, then there exists $k\in\{1,\dots,d\}$ such that, if ${\mathbf L}\in\mathcal{E}_{e,f}$ verifies ${\mathbf L}^{(k)}\geq 1$, then there are only finitely many prime numbers $p$ such that $q_{{\mathbf L},e,f}({\mathbf z})\in\mathbb{Z}_p[[{\mathbf z}]]$. Furthermore, there are only finitely many prime numbers $p$ such that $q_{e,f,k}(\mathbf{z})\in z_k\mathbb{Z}_p[[\mathbf{z}]]$.
\end{itemize}
\end{theo}

Theorem \ref{critère2} generalizes Theorem 2 from \cite{Delaygue} and Theorem 2 from \cite{Tanguy 2} (see Section \ref{partie comp ant}). If there exists $k\in\{1,\dots,d\}$ such that $|e|^{(k)}>|f|^{(k)}$, we have the following theorem which generalizes Theorem 3 from \cite{Delaygue}.

\begin{theo}\label{cas e>f}
Let $e$ and $f$ be two disjoint sequences of nonzero vectors in $\mathbb{N}^d$ such that $\mathcal{Q}_{e,f}$ is a family of integers (which is equivalent to $\Delta_{e,f}\geq 0$ on $[0,1]^d$) and such that there exists $k\in\{1,\dots,d\}$ verifying $|e|^{(k)}>|f|^{(k)}$. Then,
\begin{itemize}
\item[$(a)$] there are only finitely many prime numbers $p$ such that $q_{e,f,k}({\mathbf z})\in z_k\mathbb{Z}_p[[{\mathbf z}]]$;
\item[$(b)$] for all ${\mathbf L}\in\mathcal{E}_{e,f}$ verifying ${\mathbf L}^{(k)}\geq 1$, there are only finitely many prime numbers $p$ such that $q_{{\mathbf L},e,f}({\mathbf z})\in\mathbb{Z}_p[[{\mathbf z}]]$.
\end{itemize}
\end{theo}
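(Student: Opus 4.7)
The plan is to reduce the multivariate statement to its univariate counterpart (Theorem~3 of \cite{Delaygue}) by specializing every variable other than $z_k$ to zero, where $k$ is the coordinate for which $|e|^{(k)}>|f|^{(k)}$. I will denote by $\varphi$ the continuous ring homomorphism $\mathbb{Q}[[\mathbf{z}]]\to\mathbb{Q}[[z_k]]$ sending $z_j\mapsto 0$ for $j\neq k$, and I will form univariate sequences $e'$ and $f'$ from the $k$-th coordinates $(\mathbf{e}_i^{(k)})_{1\leq i\leq q_1}$ and $(\mathbf{f}_j^{(k)})_{1\leq j\leq q_2}$ by first discarding the zero entries and then cancelling any common entries between the two resulting sequences. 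By construction, $e'$ and $f'$ are disjoint sequences of positive integers with $|e'|=|e|^{(k)}>|f|^{(k)}=|f'|$, and the identity $\mathcal{Q}_{e',f'}(n)=\mathcal{Q}_{e,f}(n\mathbf{1}_k)$ shows that $\mathcal{Q}_{e',f'}$ is integer-valued; hence the pair $(e',f')$ satisfies all the hypotheses of the univariate Theorem~3 of \cite{Delaygue}.

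Next, I will verify that $\varphi$ translates the multivariate objects into their univariate namesakes associated with $(e',f')$. The key observation is that $\varphi$ sends to zero every monomial except those indexed by $\mathbf{n}=n\mathbf{1}_k$ with $n\in\mathbb{N}$. A direct inspection of \eqref{definitionG} and \eqref{definitionG L}, together with the remark that each cancelled pair of equal entries contributes $cH_{cn}-cH_{cn}=0$ to the sums defining the $G$-series, yields
$$
\varphi(F_{e,f})=F_{e',f'}(z_k),\qquad \varphi(G_{e,f,k})=G_{e',f',1}(z_k),
$$
and, under the assumption $\mathbf{L}^{(k)}\geq 1$, also $\varphi(G_{\mathbf{L},e,f})=G_{\mathbf{L}^{(k)},e',f'}(z_k)$. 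Since $F_{e,f}$ has constant term $1$, the formal exponentials and quotients involved are well-defined, and $\varphi$ commutes with these operations, yielding $\varphi(q_{e,f,k})=q_{e',f',1}(z_k)$ and $\varphi(q_{\mathbf{L},e,f})=q_{\mathbf{L}^{(k)},e',f'}(z_k)$.

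The theorem then follows by contraposition. If $q_{e,f,k}(\mathbf{z})\in z_k\mathbb{Z}_p[[\mathbf{z}]]$ for some prime $p$, then $\varphi$ sends it into $z_k\mathbb{Z}_p[[z_k]]$, forcing $q_{e',f',1}(z_k)\in z_k\mathbb{Z}_p[[z_k]]$; analogously for $q_{\mathbf{L},e,f}$ whenever $\mathbf{L}^{(k)}\geq 1$. Since $|e'|>|f'|$, Theorem~3 of \cite{Delaygue} asserts that each of these univariate integrality conditions holds for at most finitely many primes $p$, which is exactly assertions $(a)$ and $(b)$. The point that will require the most care is the bookkeeping in the reduction step: I must check that cancelling common entries between $e'$ and $f'$ genuinely leaves $F_{e',f'}$, $G_{e',f',1}$ and $G_{L',e',f'}$ unchanged, and that the degenerate cases in which $e'$ or $f'$ collapses to the empty sequence remain covered by the univariate theorem. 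None of this presents a substantive obstacle, as all the analytic content has already been packaged in \cite{Delaygue}.
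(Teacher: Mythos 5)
Your reduction-by-specialization strategy is a genuinely different route from the paper, and it does handle assertion $(a)$ cleanly: since the specialization $\varphi$ kills every monomial outside the $z_k$-axis, the identities $\varphi(F_{e,f})=F_{e',f'}(z_k)$ and $\varphi(G_{e,f,k})=G_{e',f',1}(z_k)$ (using $0\cdot H_{0}=0$ and $cH_{cn}-cH_{cn}=0$ for cancelled pairs) are correct, and Theorem~3$(a)$ of \cite{Delaygue} then gives the conclusion for $q_{e,f,k}$.

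However, there is a real gap in assertion $(b)$: after you discard zeros and cancel common entries to make $e'$ and $f'$ disjoint, the set $\mathcal{E}_{e',f'}=\{1,\dots,M_{e',f'}\}$ may have shrunk so much that $\mathbf{L}^{(k)}\notin\mathcal{E}_{e',f'}$, and then Theorem~3$(b)$ of \cite{Delaygue} simply does not apply to $q_{\mathbf{L}^{(k)},e',f'}$. A concrete instance: take $d=2$, $k=1$, $e=((3,1),(1,0))$, $f=((3,0),(0,1))$, so that $|e|^{(1)}=4>3=|f|^{(1)}$ and $\Delta_{e,f}\geq 0$ on $[0,1]^2$ (on $[0,1[^2$ one has $\Delta_{e,f}(x,y)=\lfloor 3x+y\rfloor-\lfloor 3x\rfloor\geq 0$). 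Then $e^{(1)}=(3,1)$, $f^{(1)}=(3,0)$, and your procedure gives $e'=(1)$, $f'=()$, so $\mathcal{E}_{e',f'}=\{1\}$. But $\mathbf{L}=(3,1)\in\mathcal{E}_{e,f}$ with $\mathbf{L}^{(1)}=3\geq 1$, and $3\notin\mathcal{E}_{e',f'}$. You cannot avoid the cancellation either, because without it $e'$ and $f'$ would fail the disjointness hypothesis of the univariate theorem, and you cannot restrict to the cancelled pair and then re-enlarge $\mathcal{E}$: the conclusion of Theorem~3$(b)$ in \cite{Delaygue} is only stated for $L\leq M$ of the cancelled pair. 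Note also that the equality $|e'|=|e|^{(k)}$ you assert fails once cancellation occurs (only the inequality $|e'|>|f'|$ survives), though that slip is harmless. The paper's own proof avoids all of this by working directly: it evaluates $\Phi_{p,k}(p\mathbf{1}_k)$ and $\Phi_{\mathbf{L},p}(p\mathbf{1}_k)$ and shows, using $v_p(\mathcal{Q}(p\mathbf{1}_k))=|e|^{(k)}-|f|^{(k)}\geq 1$ and $H_{\mathbf{L}^{(k)}}\in\mathbb{Z}_p^\times$ for large $p$, that they lie outside $p\mathbb{Z}_p$; this only uses $\mathbf{L}^{(k)}\geq 1$ and never needs an upper bound on $\mathbf{L}^{(k)}$, which is exactly the extra leverage your reduction lacks.
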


\subsection{Comparison of Theorems \ref{critère}, \ref{critère2} and \ref{cas e>f} with previous results}\label{partie comp ant}

First, we prove that Theorems \ref{critère} and \ref{critère2} generalize Corollary 1 and Theorem 2 from \cite{Tanguy 2}. We only have to prove that, if $e$ and $f$ are two disjoint sequences of nonzero vectors in $\mathbb{N}^d$, verifying $|e|=|f|$ and such that $f$ is only constituted by vectors $\mathbf{1}_k$ with $k\in\{1,\dots,d\}$, then, for all $\mathbf{x}\in\mathcal{D}_{e,f}$, we have $\Delta_{e,f}(\mathbf{x})\geq 1$. Indeed, if $\mathbf{x}\in\mathcal{D}_{e,f}$, then $\mathbf{x}\in[0,1[^d$ and, for all $k\in\{1,\dots,d\}$, we have $\mathbf{1}_k\cdot\mathbf{x}=0$. Thus, there exists an element $\mathbf{d}$ in $e$ such that $\mathbf{d}\cdot\mathbf{x}\geq 1$ and we have 
$$
\Delta_{e,f}(\mathbf{x})=\sum_{i=1}^{q_1}\lfloor\mathbf{e}_i\cdot\mathbf{x}\rfloor-\sum_{j=1}^{q_2}\lfloor\mathbf{f}_j\cdot\mathbf{x}\rfloor=\sum_{i=1}^{q_1}\lfloor\mathbf{e}_i\cdot\mathbf{x}\rfloor\geq 1.
$$

Let us now prove that Theorems \ref{critère2} and \ref{cas e>f} generalize Theorems 2 and 3 from \cite{Delaygue}. It is sufficient to note that if $d=1$, then $e$ and $f$ are two sequences of positive integers and, writing $M_{e,f}$ for the greatest element in the sequences $e$ and $f$, we obtain $\mathcal{E}_{e,f}=\{1,\dots,M_{e,f}\}$ and $\mathcal{D}_{e,f}=[1/M_{e,f},1[$.

\subsection{Structure of proofs}

First, we prove assertion $(ii)$ of Landau's criterion in Section~\ref{section demLandau}. 

Section \ref{sectioncongruform} is dedicated to the statement and the proof of Theorem \ref{theo généralisé}, which generalizes criteria of formal congruences proved by Dwork and by Krattenthaler and Rivoal. These criteria were crucial for the previous results about the integrality of the Taylor coefficients of mirror maps. Theorem \ref{theo généralisé} is central to the proofs of Theorems \ref{critère} and \ref{critère2}. 

In Section \ref{equiv crit}, we reduce the proofs of Theorems \ref{critère}, \ref{critère2} and \ref{cas e>f} to the proofs of $p$-adic relations. 

Section \ref{section technical lemma} is dedicated to the statement and the proof of a technical lemma which we will use to prove both assertions of Theorems \ref{critère} and \ref{critère2}.

We prove assertions $(i)$ of Theorems \ref{critère} and \ref{critère2} in Section \ref{rte65}, this is by far the longest and the most technical part of this article. Particularly, we have to prove certain number of delicate $p$-adic estimations in order to be able to apply Theorem \ref{theo généralisé}.

In Sections \ref{Proof(ii)768} and \ref{démo e>f}, we prove assertions $(ii)$ of Theorems \ref{critère} and \ref{critère2} and the Theorem  \ref{cas e>f}, which ensue rather fast from reformulations of these theorems established in Section \ref{equiv crit}.

Finally in Section \ref{consequenceZudi}, we prove that Theorems \ref{critère} and \ref{critère2} enable us to obtain the integrality of the Taylor coefficients of new univariate mirror maps listed in \cite{Tables} by Almkvist, van Enckevort, van Straten and Zudilin.

\section{Proof of assertion $(ii)$ of Landau's criterion}\label{section demLandau}

First, let us introduce some additional notations which we will use throughout this article. Given $d\in\mathbb{N}$, $d\geq 1$, $\lambda\in\mathbb{R}$, $k\in\{1,\dots,d\}$ and vectors ${\mathbf m}:=(m_1,\dots,m_d)$ and ${\mathbf n}:=(n_1,\dots,n_d)$ in $\mathbb{R}^d$, we write ${\mathbf m}+{\mathbf n}$ for $(m_1+n_1,\dots,m_d+n_d)$, $\lambda{\mathbf m}$ or ${\mathbf m}\lambda$ for $(\lambda m_1,\dots,\lambda m_d)$, and ${\mathbf m}/\lambda$ for $(m_1/\lambda,\dots,m_d/\lambda)$ when $\lambda$ is nonzero. 

To prove assertion $(ii)$ of Landau's criterion, we will use the fact that, for all prime $p$ and all ${\mathbf n}\in\mathbb{N}^d$, we have $v_p(\mathcal{Q}_{e,f}({\mathbf n}))=\sum_{\ell=1}^{\infty}\Delta({\mathbf n}/p^{\ell})$. Indeed, we recall that, for all $m\in\mathbb{N}$, we have the formula $v_p(m!)=\sum_{\ell=1}^{\infty}\lfloor m/p^{\ell}\rfloor$. Thereby, we get
\begin{align*}
v_p(\mathcal{Q}_{e,f}({\mathbf n}))
&=v_p\left(\frac{({\mathbf e}_1\cdot{\mathbf n})!\cdots({\mathbf e}_{q_1}\cdot{\mathbf n})!}{({\mathbf f}_1\cdot{\mathbf n})!\cdots({\mathbf f}_{q_2}\cdot{\mathbf n})!}\right)\\
&=\sum_{\ell=1}^{\infty}\left(\sum_{i=1}^{q_1}\lfloor{\mathbf e}_i\cdot{\mathbf n}/p^{\ell}\rfloor-\sum_{j=1}^{q_2}\lfloor{\mathbf f}_j\cdot{\mathbf n}/p^{\ell}\rfloor\right)=\sum_{\ell=1}^{\infty}\Delta\left(\frac{{\mathbf n}}{p^{\ell}}\right).
\end{align*}

We will need the following lemma, which we will also use for the proofs of assertions $(ii)$ of Theorems \ref{critère} and \ref{critère2}. In the rest of the article, we write $\mathbf{1}$ for the vector $(1,\dots,1)\in\mathbb{N}^d$.

\begin{lemme}\label{localconst}
Let $u:=({\mathbf u}_1,\dots,{\mathbf u}_n)$ be a sequence of vectors in $\mathbb{N}^d$ and ${\mathbf x}_0\in\mathbb{R}^d$. Then, there exists $\mu>0$ such that, for all ${\mathbf x}\in\mathbb{R}^d$ satisfying ${\mathbf 0}\leq{\mathbf x}\leq\mu{\mathbf 1}$ and all $i\in\{1,\dots,n\}$, we have $\lfloor{\mathbf u}_i\cdot({\mathbf x}_0+{\mathbf x})\rfloor=\lfloor{\mathbf u}_i\cdot{\mathbf x}_0\rfloor$.
\end{lemme}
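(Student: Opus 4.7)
The plan is to exploit the fact that the entries of each $\mathbf{u}_i$ are nonnegative, so that adding $\mathbf{x}\geq\mathbf{0}$ to $\mathbf{x}_0$ can only increase $\mathbf{u}_i\cdot\mathbf{x}_0$. The floor $\lfloor\mathbf{u}_i\cdot(\mathbf{x}_0+\mathbf{x})\rfloor$ can therefore only jump upward, and it will remain equal to $\lfloor\mathbf{u}_i\cdot\mathbf{x}_0\rfloor$ provided we do not cross the next integer above $\mathbf{u}_i\cdot\mathbf{x}_0$. Quantitatively, for each $i$ I would set
$$
c_i:=\lfloor\mathbf{u}_i\cdot\mathbf{x}_0\rfloor+1-\mathbf{u}_i\cdot\mathbf{x}_0,
$$
which lies in $(0,1]$ (it equals $1$ if $\mathbf{u}_i\cdot\mathbf{x}_0$ is an integer and equals $1-\{\mathbf{u}_i\cdot\mathbf{x}_0\}>0$ otherwise). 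The equality $\lfloor\mathbf{u}_i\cdot(\mathbf{x}_0+\mathbf{x})\rfloor=\lfloor\mathbf{u}_i\cdot\mathbf{x}_0\rfloor$ is then equivalent to the double inequality $0\leq\mathbf{u}_i\cdot\mathbf{x}<c_i$.

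The lower bound is automatic from $\mathbf{x}\geq\mathbf{0}$ and $\mathbf{u}_i\in\mathbb{N}^d$. For the upper bound, I would use the crude estimate $\mathbf{u}_i\cdot\mathbf{x}\leq\mu\sum_{k=1}^d\mathbf{u}_i^{(k)}$ for $\mathbf{0}\leq\mathbf{x}\leq\mu\mathbf{1}$. Choosing
$$
\mu:=\tfrac{1}{2}\min\Bigl\{\tfrac{c_i}{\sum_{k=1}^d\mathbf{u}_i^{(k)}}\,:\,i\in\{1,\dots,n\},\ \mathbf{u}_i\neq\mathbf{0}\Bigr\}
$$
(with any positive $\mu$ if every $\mathbf{u}_i$ is zero) guarantees $\mathbf{u}_i\cdot\mathbf{x}<c_i$ for every $i$ and every admissible $\mathbf{x}$, which is precisely the desired conclusion.

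There is no genuine obstacle here: the statement is essentially a right-continuity property for the piecewise-constant maps $\mathbf{x}\mapsto\lfloor\mathbf{u}_i\cdot\mathbf{x}\rfloor$ along the nonnegative orthant, and the finiteness of the family $\{\mathbf{u}_1,\dots,\mathbf{u}_n\}$ lets us pick a uniform $\mu$. The only point to be slightly careful about is that $c_i$ must be strictly positive even when $\mathbf{u}_i\cdot\mathbf{x}_0$ is an integer, which is why I defined $c_i$ with the $+1$ rather than as the naive $1-\{\mathbf{u}_i\cdot\mathbf{x}_0\}$.
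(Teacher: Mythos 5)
Your proof is correct and follows essentially the same strategy as the paper's: both exploit the nonnegativity of $\mathbf{u}_i\cdot\mathbf{x}$ (so the floor can only move upward), identify the positive margin $c_i=\lfloor\mathbf{u}_i\cdot\mathbf{x}_0\rfloor+1-\mathbf{u}_i\cdot\mathbf{x}_0=1-\{\mathbf{u}_i\cdot\mathbf{x}_0\}$ before the next integer, and choose $\mu$ by minimizing $c_i/\sum_k\mathbf{u}_i^{(k)}$ over the finitely many nonzero $\mathbf{u}_i$. The only cosmetic difference is that you write out the threshold explicitly whereas the paper abstractly invokes a $\nu_y>0$ with $\lfloor y+\nu_y\rfloor=\lfloor y\rfloor$; note also that your worry about defining $c_i$ via the $+1$ rather than $1-\{\mathbf{u}_i\cdot\mathbf{x}_0\}$ is moot, since these two expressions coincide for all real arguments, including integers.
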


\begin{proof}
For all $y>0$, there exists $\nu_y>0$ such that $\lfloor y+\nu_y\rfloor=\lfloor y\rfloor$. Thus, writing $\nu:=\min\{\nu_{{\mathbf u}_i\cdot{\mathbf x}_0}\,:\,1\leq i\leq n\}>0$, we obtain that, for all $i\in\{1,\dots,n\}$, we have $\lfloor{\mathbf u}_i.{\mathbf x}_0+\nu\rfloor=\lfloor{\mathbf u}_i.{\mathbf x}_0\rfloor$. Therefore, writing $\mu:=\min\{\nu/|{\mathbf u}_i|\,:\,1\leq i\leq n,\,{\mathbf u}_i\neq{\mathbf 0}\}>0$, we get that, for all ${\mathbf 0}\leq{\mathbf x}\leq\mu{\mathbf 1}$ and all $i\in\{1,\dots,n\}$, we have ${\mathbf u}_i\cdot{\mathbf x}\leq\mu |{\mathbf u}_i|\leq\nu$ so $\lfloor{\mathbf u}_i\cdot({\mathbf x}_0+{\mathbf x})\rfloor=\lfloor{\mathbf u}_i\cdot{\mathbf x}_0\rfloor$. This completes the proof of the lemma.
\end{proof}

\begin{proof}[Proof of assertion $(ii)$ of Landau's criterion]
Given ${\mathbf x}_0\in [0,1]^d$ satisfying $\Delta_{e,f}({\mathbf x}_0)\leq -1$ and applying Lemma \ref{localconst} with, instead of $u$, the sequence constituted by the elements of $e$ and $f$, we obtain that there exists $\mu>0$ such that, for all ${\mathbf x}\in\mathbb{R}^d$ verifying ${\mathbf 0}\leq{\mathbf x}\leq\mu{\mathbf 1}$, we have $\Delta_{e,f}({\mathbf x}_0+{\mathbf x})=\Delta_{e,f}({\mathbf x}_0)\leq -1$. We write $\mathcal{U}:=\{{\mathbf x}_0+{\mathbf x}\,:\,{\mathbf 0}\leq{\mathbf x}\leq\mu{\mathbf 1}\}$ during the proof.

There exists a constant $\mathcal{N}_1$ such that, for all prime $p\geq\mathcal{N}_1$, there is ${\mathbf n}_p\in\mathbb{N}^d$ such that ${\mathbf n}_p/p\in\mathcal{U}$. There exists a constant $\mathcal{N}_2$ such that, for all prime $p\geq\mathcal{N}_2$ and all ${\mathbf d}\in\{{\mathbf e}_1,\dots,{\mathbf e}_{q_1},{\mathbf f}_1,\dots,{\mathbf f}_{q_2}\}$, we have $|{\mathbf d}|(\mu+1)/p<1$.

Thus, for all prime number $p\geq\mathcal{N}:=\max(\mathcal{N}_1,\mathcal{N}_2)$ and all integer $\ell\geq 2$, we have $\Delta_{e,f}({\mathbf n}_p/p)\leq -1$ and, as ${\mathbf n}_p/p\in\mathcal{U}$, we have ${\mathbf n}_p/p\leq (1+\mu){\mathbf 1}$ and ${\mathbf n}_p/p^{\ell}\leq{\mathbf n}_p/p^2\leq(\mu+1)/p{\mathbf 1}$. As a result, for all ${\mathbf d}\in\{{\mathbf e}_1,\dots,{\mathbf e}_{q_1},{\mathbf f}_1,\dots,{\mathbf f}_{q_2}\}$, we obtain ${\mathbf d}\cdot{\mathbf n}_p/p^{\ell}\leq |{\mathbf d}|(\mu+1)/p<1$, which leads to ${\mathbf n}_p/p^{\ell}\in [0,1[^d\setminus\mathcal{D}_{e,f}$ and so $\Delta_{e,f}({\mathbf n}_p/p^{\ell})=0$.

Thus, for all prime $p\geq\mathcal{N}$, we have $v_p(\mathcal{Q}_{e,f}({\mathbf n}_p))=\sum_{\ell=1}^{\infty}\Delta_{e,f}({\mathbf n}_p/p^{\ell})\leq -1$, which finishes the proof of Landau's criterion.
\end{proof}

\section{Formal congruences}\label{sectioncongruform}

The proof of assertion $(i)$ of Theorem \ref{critère2} is essentially based on the generalization (Theorem \ref{theo généralisé} below) of a theorem of Krattenthaler and Rivoal \cite[Theorem 1, p. 3]{Tanguy 2} which is a multivariate adaptation of a Dwork's theorem \cite[Theorem 1, p. 296]{Dwork}. 

Before stating the Theorem \ref{theo généralisé}, we introduce some notations. Let $p$ be a prime number and $d\in\mathbb{N}$, $d\geq 1$. We write $\Omega$ for the completion of the algebraic closure of $\mathbb{Q}_p$ and $\mathcal{O}$ for the ring of integers of $\Omega$. 

If $\mathcal{N}$ is a subset of $\bigcup_{t\geq 1}\left(\{0,\dots,p^t-1\}^d\times\{t\}\right)$, then, for all $s\in\mathbb{N}$, we write $\Psi_s(\mathcal{N})$ for the set of all ${\mathbf u}\in\{0,\dots,p^s-1\}^d$ such that, for all $({\mathbf n},t)\in\mathcal{N}$, with $t\leq s$, and all ${\mathbf j}\in\{0,\dots,p^{s-t}-1\}^d$, we have ${\mathbf u}\neq{\mathbf j}+p^{s-t}{\mathbf n}$. 

Given $\mathbf{u}\in\{0,\dots,p^s-1\}^d$, $\mathbf{u}:=\sum_{k=0}^{s-1}\mathbf{u_k}p^k$ with $\mathbf{u}_k\in\{0,\dots,p-1\}^d$, we write $\mathcal{M}_s(\mathbf{u})$ for the word $\mathbf{u}_{0}\cdots\mathbf{u}_{s-1}$ of length $s$ on the alphabet $\{0,\dots,p-1\}^d$. According to this definition, we have $\mathbf{u}\in\Psi_s(\mathcal{N})$ if and only if none of the words $\mathcal{M}_t(\mathbf{n})$, $(\mathbf{n},t)\in\mathcal{N}$, is a suffix of $\mathcal{M}_s(\mathbf{u})$.

For example, let us take $\mathcal{N}:=\{(\mathbf{0},t)\,:\,t\geq 1\}$. In this case, $\Psi_s(\mathcal{N})$ is the set of all $\mathbf{u}=\sum_{k=0}^{s-1}\mathbf{u}_kp^k$ such that $\mathbf{u}_{s-1}\neq\mathbf{0}$. We observe that $\Psi_s(\mathcal{N})=\Psi_s(\mathcal{N}')$ with $\mathcal{N}'=\{(\mathbf{0},1)\}$.

\begin{theo}\label{theo généralisé}
Let us fix a prime number $p$. Let $({\mathbf A}_r)_{r\geq 0}$ be a sequence of maps from $\mathbb{N}^d$ to $\Omega\setminus\{0\}$ and $({\mathbf g}_r)_{r\geq 0}$ be a sequence of maps from $\mathbb{N}^d$ to $\mathcal{O}\setminus\{0\}$. We assume that there exists $\mathcal{N}\subset\bigcup_{t\geq 1}\left(\{0,\dots,p^t-1\}^d\times\{t\}\right)$ such that, for all $r\geq 0$, we have
\begin{itemize}
\item[$(i)$]{$|{\mathbf A}_r({\mathbf 0})|_p=1$;}
\item[$(ii)$]{for all ${\mathbf m}\in\mathbb{N}^d$, we have ${\mathbf A}_r({\mathbf m})\in {\mathbf g}_r({\mathbf m})\mathcal{O}$;}
\end{itemize}
\begin{itemize}
\item[$(iii)$]{for all $s\in\mathbb{N}$ and $\mathbf{m}\in\mathbb{N}^d$, we have: 
\begin{itemize}
\item[$(a)$] for all ${\mathbf u}\in\Psi_s(\mathcal{N})$ and ${\mathbf v}\in\{0,\dots,p-1\}^d$, we have  
$$
\frac{{\mathbf A}_r({\mathbf v}+{\mathbf u}p+{\mathbf m}p^{s+1})}{{\mathbf A}_r({\mathbf v}+{\mathbf u}p)}-\frac{{\mathbf A}_{r+1}({\mathbf u}+{\mathbf m}p^{s})}{{\mathbf A}_{r+1}({\mathbf u})}\in p^{s+1}\frac{{\mathbf g}_{r+s+1}({\mathbf m})}{{\mathbf A}_r({\mathbf v}+{\mathbf u}p)}\mathcal{O};
$$
\begin{itemize}
\item[$(a_1)$] furthermore, if ${\mathbf v}+p{\mathbf u}\in\Psi_{s+1}(\mathcal{N})$, then we have
$$
\frac{{\mathbf A}_r({\mathbf v}+{\mathbf u}p+{\mathbf m}p^{s+1})}{{\mathbf A}_r({\mathbf v}+{\mathbf u}p)}-\frac{{\mathbf A}_{r+1}({\mathbf u}+{\mathbf m}p^{s})}{{\mathbf A}_{r+1}({\mathbf u})}\in p^{s+1}\frac{{\mathbf g}_{r+s+1}({\mathbf m})}{{\mathbf g}_r({\mathbf v}+{\mathbf u}p)}\mathcal{O};
$$
\item[$(a_2)$] on the other hand, if ${\mathbf v}+p{\mathbf u}\notin\Psi_{s+1}(\mathcal{N})$, then we have 
$$
\frac{{\mathbf A}_{r+1}({\mathbf u}+p^{s}{\mathbf m})}{{\mathbf A}_{r+1}({\mathbf u})}\in p^{s+1}\frac{{\mathbf g}_{s+r+1}({\mathbf m})}{{\mathbf g}_{r}({\mathbf v}+p{\mathbf u})}\mathcal{O};
$$
\end{itemize}
\medskip

\item[$(b)$] For all $({\mathbf n},t)\in\mathcal{N}$, we have ${\mathbf g}_{r}\left({\mathbf n}+p^{t}{\mathbf m}\right)\in p^{t}{\mathbf g}_{r+t}({\mathbf m})\mathcal{O}$; 
\end{itemize}
}
\end{itemize}
Then, for all ${\mathbf a}\in\{0,\dots,p-1\}^d$, ${\mathbf m}\in\mathbb{N}^d$, $s,r\in\mathbb{N}$ and ${\mathbf K}\in\mathbb{Z}^d$, we have 
\begin{multline}\label{TheBut}
{\mathbf S}_r({\mathbf a},{\mathbf K},s,p,{\mathbf m}):=\\
\sum_{{\mathbf m}p^s\leq{\mathbf j}\leq({\mathbf m}+{\mathbf 1})p^s-{\mathbf 1}}\big({\mathbf A}_r({\mathbf a}+p({\mathbf K}-{\mathbf j})){\mathbf A}_{r+1}({\mathbf j})-{\mathbf A}_{r+1}({\mathbf K}-{\mathbf j}){\mathbf A}_r({\mathbf a}+{\mathbf j}p)\big)\in p^{s+1}{\mathbf g}_{s+r+1}({\mathbf m})\mathcal{O},
\end{multline}
where we extend $\mathbf{A}_r$ to $\mathbb{Z}^d$ by ${\mathbf A}_r({\mathbf n})=0$ if there is an $i\in\{1,\dots,d\}$ such that $n_i<0$.
\end{theo}

This theorem generalizes Theorem 1 from \cite{Tanguy 2}. Indeed, let $A:\mathbb{N}^d\mapsto\mathbb{Z}_p\setminus\{0\}$ and $g:\mathbb{N}^d\mapsto\mathbb{Z}_p\setminus\{0\}$ be two maps verifying conditions $(i)$, $(ii)$ and $(iii)$ of Theorem 1 from~\cite{Tanguy 2}. Let $(\mathbf{A}_r)_{r\geq 0}$ be the constant sequence of value $A$ and $(\mathbf{g}_r)_{r\geq 0}$ be the constant sequence of value $g$. These two sequences verify conditions $(i)$ and $(ii)$ of Theorem~\ref{theo généralisé}. Let us choose $\mathcal{N}:=\emptyset$ so that, for all $s\in\mathbb{N}$, we have $\Psi_s(\mathcal{N})=\{0,\dots,p^s-1\}^d$. In particular, conditions $(a_2)$ and $(b)$ of Theorem \ref{theo généralisé} are empty. Thus we only have to prove that $(\mathbf{A}_r)_{r\geq 0}$ and $(\mathbf{g}_r)_{r\geq 0}$ verify assertions $(a)$ and $(a_1)$ of Theorem \ref{theo généralisé}. The equality $\Psi_{s+1}(\mathcal{N})=\{0,\dots,p^{s+1}-1\}^d$, associated with assertion $(ii)$, proves that condition $(a_1)$ implies assertion $(a)$. But assertion $(a_1)$ corresponds to no other assertion than $(iii)$ of Theorem 1 from \cite{Tanguy 2}. Thus the conditions of Theorem \ref{theo généralisé} are valid and we have the conclusion of Theorem~1 from \cite{Tanguy 2}.

The aim of the end of this section is to prove Theorem \ref{theo généralisé}.

\subsection{Proof of Theorem \ref{theo généralisé}}

The structure of the proof is based on those of the theorems of Dwork and Krattenthaler and Rivoal, but it rather appreciably differs in details.

For all $s\in\mathbb{N}$, $s\geq 1$, we write $\alpha_s$ for the following assertion: for all ${\mathbf a}\in\{0,\dots,p-1\}^d$, $u\in\{0,\dots,s-1\}$, ${\mathbf m}\in\mathbb{N}^d$, $r\geq0$ and ${\mathbf K}\in\mathbb{Z}^d$, we have the congruence 
$$
{\mathbf S}_r({\mathbf a},{\mathbf K},u,p,{\mathbf m})\in p^{u+1}{\mathbf g}_{u+r+1}({\mathbf m})\mathcal{O}.
$$

For all $s\in\mathbb{N}$, $s\geq 1$ and $t\in\{0,\dots,s\}$, we write $\beta_{t,s}$ for the following assertion: for all ${\mathbf a}\in\{0,\dots,p-1\}^d$, ${\mathbf m}\in\mathbb{N}^d$, $r\geq0$ and ${\mathbf K}\in\mathbb{Z}^d$, we have the congruence 
\begin{multline*}
{\mathbf S}_r({\mathbf a},{\mathbf K}+{\mathbf m}p^{s},s,p,{\mathbf m})\equiv\\ \sum_{{\mathbf j}\in\Psi_{s-t}(\mathcal{N})}\frac{{\mathbf A}_{t+r+1}({\mathbf j}+{\mathbf m}p^{s-t})}{{\mathbf A}_{t+r+1}({\mathbf j})}{\mathbf S}_r({\mathbf a},{\mathbf K},t,p,{\mathbf j})\mod p^{s+1}{\mathbf g}_{s+r+1}({\mathbf m})\mathcal{O}.
\end{multline*}
For all ${\mathbf a}\in\{0,\dots,p-1\}^d$, ${\mathbf K}\in\mathbb{Z}^d$, $r\in\mathbb{N}$ and ${\mathbf j}\in\mathbb{N}^d$, we set 
$$
{\mathbf U}_r({\mathbf a},{\mathbf K},p,{\mathbf j}):={\mathbf A}_r({\mathbf a}+p({\mathbf K}-{\mathbf j})){\mathbf A}_{r+1}({\mathbf j})-{\mathbf A}_{r+1}({\mathbf K}-{\mathbf j}){\mathbf A}_r({\mathbf a}+{\mathbf j}p).
$$
Then we have 
$$
{\mathbf S}_r({\mathbf a},{\mathbf K},s,p,{\mathbf m})=\sum_{{\mathbf 0}\leq{\mathbf j}\leq(p^s-1){\mathbf 1}}{\mathbf U}_r({\mathbf a},{\mathbf K},p,{\mathbf j}+{\mathbf m}p^s).
$$ 
We state now four lemmas enabling us to prove \eqref{TheBut}.

\begin{lemme}\label{Assertion 1}
Assertion $\alpha_1$ is true.
\end{lemme}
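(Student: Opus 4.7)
The plan is to observe that $\alpha_1$ only concerns the value $u=0$, for which the sum defining ${\mathbf S}_r({\mathbf a},{\mathbf K},0,p,{\mathbf m})$ collapses to its single term at ${\mathbf j}={\mathbf m}$. Writing ${\mathbf N}:={\mathbf K}-{\mathbf m}$, this gives
$$
{\mathbf S}_r({\mathbf a},{\mathbf K},0,p,{\mathbf m})={\mathbf A}_r({\mathbf a}+p{\mathbf N}){\mathbf A}_{r+1}({\mathbf m})-{\mathbf A}_{r+1}({\mathbf N}){\mathbf A}_r({\mathbf a}+p{\mathbf m}),
$$
and we must show this lies in $p{\mathbf g}_{r+1}({\mathbf m})\mathcal{O}$. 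First I would dispose of the trivial case where ${\mathbf N}$ has a negative coordinate: since ${\mathbf a}\in\{0,\dots,p-1\}^d$, the vector ${\mathbf a}+p{\mathbf N}$ then also has a negative coordinate, so both ${\mathbf A}_{r+1}({\mathbf N})$ and ${\mathbf A}_r({\mathbf a}+p{\mathbf N})$ vanish by the convention extending $\mathbf{A}_r$ to $\mathbb{Z}^d$, and ${\mathbf S}_r=0$.

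For the non-trivial case ${\mathbf N}\in\mathbb{N}^d$, the key is to apply hypothesis $(iii)(a)$ at $s=0$. Since $\Psi_0(\mathcal{N})=\{{\mathbf 0}\}$ (the condition over $({\mathbf n},t)\in\mathcal{N}$ with $t\leq 0$ is vacuous because $t\geq 1$ in $\mathcal{N}$), taking ${\mathbf u}={\mathbf 0}$, ${\mathbf v}={\mathbf a}$ and multiplying out by ${\mathbf A}_r({\mathbf a})\,{\mathbf A}_{r+1}({\mathbf 0})$ (legitimate because $|{\mathbf A}_{r+1}({\mathbf 0})|_p=1$ by~$(i)$) yields
$$
{\mathbf A}_r({\mathbf a}+p{\mathbf n})=\frac{{\mathbf A}_r({\mathbf a}){\mathbf A}_{r+1}({\mathbf n})}{{\mathbf A}_{r+1}({\mathbf 0})}+p\,{\mathbf g}_{r+1}({\mathbf n})\,\beta_{\mathbf n}
$$
for ${\mathbf n}={\mathbf m}$ and ${\mathbf n}={\mathbf N}$, with $\beta_{\mathbf m},\beta_{\mathbf N}\in\mathcal{O}$.

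Substituting these two expressions into ${\mathbf S}_r$, the "main terms'' $\frac{{\mathbf A}_r({\mathbf a})}{{\mathbf A}_{r+1}({\mathbf 0})}{\mathbf A}_{r+1}({\mathbf N}){\mathbf A}_{r+1}({\mathbf m})$ cancel, leaving
$$
{\mathbf S}_r({\mathbf a},{\mathbf K},0,p,{\mathbf m})=p\bigl({\mathbf g}_{r+1}({\mathbf N}){\mathbf A}_{r+1}({\mathbf m})\beta_{\mathbf N}-{\mathbf g}_{r+1}({\mathbf m}){\mathbf A}_{r+1}({\mathbf N})\beta_{\mathbf m}\bigr).
$$
Applying $(ii)$ to write ${\mathbf A}_{r+1}({\mathbf m})\in{\mathbf g}_{r+1}({\mathbf m})\mathcal{O}$ and ${\mathbf A}_{r+1}({\mathbf N})\in{\mathbf g}_{r+1}({\mathbf N})\mathcal{O}$ shows that each of the two summands belongs to $p\,{\mathbf g}_{r+1}({\mathbf m}){\mathbf g}_{r+1}({\mathbf N})\mathcal{O}$, which is contained in $p\,{\mathbf g}_{r+1}({\mathbf m})\mathcal{O}$ because ${\mathbf g}_{r+1}({\mathbf N})\in\mathcal{O}$.

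There is no real obstacle here: the result is essentially just hypothesis $(iii)(a)$ at its simplest instance, combined with $(i)$ and $(ii)$. The only points requiring minor vigilance are the legitimacy of the division by ${\mathbf A}_{r+1}({\mathbf 0})$ (immediate from $(i)$) and the bookkeeping for the convention $\mathbf{A}_r({\mathbf n})=0$ when ${\mathbf n}$ has a negative coordinate.
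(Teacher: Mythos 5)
Your proof is correct and takes essentially the same approach as the paper: both isolate the single term of ${\mathbf S}_r({\mathbf a},{\mathbf K},0,p,{\mathbf m})$, apply hypothesis $(iii)(a)$ at $s=0$, ${\mathbf u}={\mathbf 0}$, ${\mathbf v}={\mathbf a}$ to the two factors ${\mathbf A}_r({\mathbf a}+p{\mathbf N})$ and ${\mathbf A}_r({\mathbf a}+p{\mathbf m})$, observe the resulting main terms cancel, and then use $(ii)$ to place the error terms in $p{\mathbf g}_{r+1}({\mathbf m})\mathcal{O}$. The paper presents the same computation as a factorization of ${\mathbf S}_r$ followed by two congruences, but the algebra is identical.
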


\begin{lemme}\label{Assertion 2}
For all $s,r\in\mathbb{N}$, ${\mathbf m}\in\mathbb{N}^d$, $a\in\{0,\dots,p-1\}^d$, ${\mathbf j}\in\Psi_s(\mathcal{N})$ and ${\mathbf K}\in\mathbb{Z}^d$, we have
$$
{\mathbf U}_r({\mathbf a},{\mathbf K}+{\mathbf m}p^{s},p,{\mathbf j}+{\mathbf m}p^{s})\equiv\frac{{\mathbf A}_{r+1}({\mathbf j}+{\mathbf m}p^{s})}{{\mathbf A}_{r+1}({\mathbf j})}{\mathbf U}_r({\mathbf a},{\mathbf K},p,{\mathbf j})\mod p^{s+1}{\mathbf g}_{s+r+1}({\mathbf m})\mathcal{O}.
$$
\end{lemme}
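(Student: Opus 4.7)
The plan is a direct algebraic computation built on one cancellation. First I would unfold both sides using the definition
\[\mathbf{U}_r(\mathbf{a},\mathbf{K},p,\mathbf{j})=\mathbf{A}_r(\mathbf{a}+p(\mathbf{K}-\mathbf{j}))\mathbf{A}_{r+1}(\mathbf{j})-\mathbf{A}_{r+1}(\mathbf{K}-\mathbf{j})\mathbf{A}_r(\mathbf{a}+p\mathbf{j}),\]
and exploit the observation that the shift by $\mathbf{m}p^s$ cancels in every argument involving a difference, since $(\mathbf{K}+\mathbf{m}p^s)-(\mathbf{j}+\mathbf{m}p^s)=\mathbf{K}-\mathbf{j}$. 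This yields
\[\mathbf{U}_r(\mathbf{a},\mathbf{K}+\mathbf{m}p^s,p,\mathbf{j}+\mathbf{m}p^s)=\mathbf{A}_r(\mathbf{a}+p(\mathbf{K}-\mathbf{j}))\mathbf{A}_{r+1}(\mathbf{j}+\mathbf{m}p^s)-\mathbf{A}_{r+1}(\mathbf{K}-\mathbf{j})\mathbf{A}_r(\mathbf{a}+p\mathbf{j}+\mathbf{m}p^{s+1}).\]

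Subtracting $\frac{\mathbf{A}_{r+1}(\mathbf{j}+\mathbf{m}p^s)}{\mathbf{A}_{r+1}(\mathbf{j})}\mathbf{U}_r(\mathbf{a},\mathbf{K},p,\mathbf{j})$, the two summands containing the common factor $\mathbf{A}_r(\mathbf{a}+p(\mathbf{K}-\mathbf{j}))\mathbf{A}_{r+1}(\mathbf{j}+\mathbf{m}p^s)$ cancel identically, and after pulling $\mathbf{A}_r(\mathbf{a}+p\mathbf{j})$ out of the remaining terms, the difference between the two sides of the claimed congruence reduces to the single product
\[-\mathbf{A}_{r+1}(\mathbf{K}-\mathbf{j})\,\mathbf{A}_r(\mathbf{a}+p\mathbf{j})\left[\frac{\mathbf{A}_r(\mathbf{a}+p\mathbf{j}+\mathbf{m}p^{s+1})}{\mathbf{A}_r(\mathbf{a}+p\mathbf{j})}-\frac{\mathbf{A}_{r+1}(\mathbf{j}+\mathbf{m}p^s)}{\mathbf{A}_{r+1}(\mathbf{j})}\right].\]

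At this point I would invoke hypothesis $(iii)(a)$ of Theorem~\ref{theo g�n�ralis�} with $\mathbf{v}:=\mathbf{a}$ and $\mathbf{u}:=\mathbf{j}$, whose requirements $\mathbf{a}\in\{0,\dots,p-1\}^d$ and $\mathbf{j}\in\Psi_s(\mathcal{N})$ are precisely the hypotheses of the lemma. It places the bracketed expression in $p^{s+1}\frac{\mathbf{g}_{r+s+1}(\mathbf{m})}{\mathbf{A}_r(\mathbf{a}+p\mathbf{j})}\mathcal{O}$; multiplying by $\mathbf{A}_r(\mathbf{a}+p\mathbf{j})$ clears the denominator, and the remaining factor $\mathbf{A}_{r+1}(\mathbf{K}-\mathbf{j})$ belongs to $\mathcal{O}$ by hypothesis $(ii)$ (or vanishes by the extension convention when $\mathbf{K}-\mathbf{j}$ has a negative coordinate, in which case both sides of the congruence are seen to coincide directly). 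Hence the whole difference lies in $p^{s+1}\mathbf{g}_{r+s+1}(\mathbf{m})\mathcal{O}$, as required. I foresee no substantial obstacle: the lemma is essentially a one-line consequence of $(iii)(a)$ once one spots the cancellation produced by the identical shift $\mathbf{m}p^s$ applied to $\mathbf{K}$ and $\mathbf{j}$.
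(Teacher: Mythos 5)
Your proof is correct and follows essentially the same route as the paper's: after unfolding $\mathbf{U}_r$ on both sides, the term $\mathbf{A}_r(\mathbf{a}+p(\mathbf{K}-\mathbf{j}))\mathbf{A}_{r+1}(\mathbf{j}+\mathbf{m}p^s)$ cancels because the shift by $\mathbf{m}p^s$ disappears in $\mathbf{K}-\mathbf{j}$, leaving exactly the single bracketed factor to which hypothesis $(iii)(a)$ applies with $\mathbf{v}=\mathbf{a}$, $\mathbf{u}=\mathbf{j}$, and then condition $(ii)$ (or the vanishing convention) bounds the leftover $\mathbf{A}_{r+1}(\mathbf{K}-\mathbf{j})$. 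You are slightly more explicit than the paper in handling the case where $\mathbf{K}-\mathbf{j}$ has a negative coordinate, which is a minor but welcome refinement.
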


\begin{lemme}\label{iii 0}
For all $s\in\mathbb{N}$, $s\geq 1$, if $\alpha_s$ is true, then, for all ${\mathbf a}\in\{0,\dots,p-1\}^d$, ${\mathbf K}\in\mathbb{Z}^d$, $r\geq 0$ and ${\mathbf m}\in\mathbb{N}^d$, we have
$$
{\mathbf S}_r({\mathbf a},{\mathbf K},s,p,{\mathbf m})\equiv\sum_{{\mathbf j}\in\Psi_s(\mathcal{N})}{\mathbf U}_r({\mathbf a},{\mathbf K},p,{\mathbf j}+{\mathbf m}p^s)\mod p^{s+1}{\mathbf g}_{s+r+1}({\mathbf m})\mathcal{O}.
$$
\end{lemme}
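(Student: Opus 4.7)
The plan is to identify the terms in ${\mathbf S}_r({\mathbf a},{\mathbf K},s,p,{\mathbf m})$ indexed by ${\mathbf j}\notin\Psi_s(\mathcal{N})$, to regroup them into partial sums each of the form ${\mathbf S}_r({\mathbf a},{\mathbf K},s-t,p,\,\cdot\,)$, and then to combine the assumption $\alpha_s$ with condition $(iii)(b)$ to show that every such partial sum already lies in $p^{s+1}{\mathbf g}_{s+r+1}({\mathbf m})\mathcal{O}$.

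To set up the regrouping I would introduce $\mathcal{N}^{*}\subseteq\mathcal{N}$, the set of pairs $({\mathbf n},t)\in\mathcal{N}$ with $1\leq t\leq s$ that are \emph{minimal} in the sense that no $({\mathbf n}',t')\in\mathcal{N}$ with $t'<t$ has $\mathcal{M}_{t'}({\mathbf n}')$ as a suffix of $\mathcal{M}_{t}({\mathbf n})$. Using the suffix-avoidance description of $\Psi_s$ and the finiteness of the chains $t>t'>\cdots$ (since $t\geq 1$), one checks that $\Psi_s(\mathcal{N})=\Psi_s(\mathcal{N}^{*})$ and that the complement enjoys the disjoint decomposition
$$\{0,\dots,p^s-1\}^d\setminus\Psi_s(\mathcal{N})=\bigsqcup_{({\mathbf n},t)\in\mathcal{N}^{*}}\big\{{\mathbf i}+p^{s-t}{\mathbf n}\,:\,{\mathbf 0}\leq{\mathbf i}\leq(p^{s-t}-1){\mathbf 1}\big\}.$$
The combinatorial point is that for ${\mathbf u}\notin\Psi_s(\mathcal{N})$ the \emph{minimum} length $t$ of a suffix of $\mathcal{M}_s({\mathbf u})$ matching some $\mathcal{M}_{t}({\mathbf n})$ with $({\mathbf n},t)\in\mathcal{N}$ is uniquely realized by a pair in $\mathcal{N}^{*}$, whose ${\mathbf n}$ is read off from the last $t$ positions of $\mathcal{M}_s({\mathbf u})$; if two pairs $({\mathbf n}_1,t_1),({\mathbf n}_2,t_2)\in\mathcal{N}^{*}$ both realised ${\mathbf u}={\mathbf i}_j+p^{s-t_j}{\mathbf n}_j$ with, say, $t_1<t_2$, then $\mathcal{M}_{t_1}({\mathbf n}_1)$ would be a suffix of $\mathcal{M}_{t_2}({\mathbf n}_2)$, contradicting minimality of $({\mathbf n}_2,t_2)$.

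With this decomposition in hand, the identity $({\mathbf i}+p^{s-t}{\mathbf n})+{\mathbf m}p^s={\mathbf i}+p^{s-t}({\mathbf n}+p^{t}{\mathbf m})$ rearranges the unwanted part of the sum into
$$\sum_{{\mathbf j}\notin\Psi_s(\mathcal{N})}{\mathbf U}_r({\mathbf a},{\mathbf K},p,{\mathbf j}+{\mathbf m}p^s)=\sum_{({\mathbf n},t)\in\mathcal{N}^{*}}{\mathbf S}_r({\mathbf a},{\mathbf K},s-t,p,{\mathbf n}+p^{t}{\mathbf m}).$$
For each $({\mathbf n},t)\in\mathcal{N}^{*}$ we have $0\leq s-t\leq s-1$, so $\alpha_s$ applied with $u:=s-t$ and ${\mathbf m}$ replaced by ${\mathbf n}+p^{t}{\mathbf m}$ yields ${\mathbf S}_r({\mathbf a},{\mathbf K},s-t,p,{\mathbf n}+p^{t}{\mathbf m})\in p^{s-t+1}{\mathbf g}_{s-t+r+1}({\mathbf n}+p^{t}{\mathbf m})\mathcal{O}$. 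Condition $(iii)(b)$, applied with the index shift $r\mapsto s-t+r+1$, then gives ${\mathbf g}_{s-t+r+1}({\mathbf n}+p^{t}{\mathbf m})\in p^{t}{\mathbf g}_{s+r+1}({\mathbf m})\mathcal{O}$, so the product of the two divisibilities is $p^{s+1}{\mathbf g}_{s+r+1}({\mathbf m})\mathcal{O}$, which is the required bound on each summand.

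The main obstacle is the first step: defining $\mathcal{N}^{*}$ carefully enough so that the complement of $\Psi_s(\mathcal{N})$ splits as a truly \emph{disjoint} union. A naive summation over all pairs $({\mathbf n},t)\in\mathcal{N}$ with $t\leq s$ would count indices ${\mathbf j}$ possessing several suffix matches with distinct $({\mathbf n},t)$'s multiple times, and no restriction on the inner index ${\mathbf i}$ would then give back a full ${\mathbf S}_r$; restricting to minimal pairs both removes the overcounting and preserves the fact that ${\mathbf i}$ ranges freely over $\{0,\dots,p^{s-t}-1\}^d$. Once this combinatorial partition is secured, the arithmetic part is a direct substitution of $\alpha_s$ and $(iii)(b)$.
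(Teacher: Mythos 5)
Your proposal is correct and follows the same overall route as the paper: partition the complement of $\Psi_s(\mathcal{N})$ into blocks of the form $\{\mathbf{i}+p^{s-t}\mathbf{n}\}$, reindex each block so that its contribution becomes a single $\mathbf{S}_r(\mathbf{a},\mathbf{K},s-t,p,\mathbf{n}+p^t\mathbf{m})$, then apply $\alpha_s$ followed by condition $(b)$. The one place you genuinely diverge is the combinatorial step: the paper notes only that any two of the sets $J(\mathbf{n},t)$ are either disjoint or nested and that $\mathcal{N}_s$ is finite, and then extracts a maximal disjoint subfamily somewhat implicitly; you instead give a canonical choice by defining $\mathcal{N}^*$ as the suffix-minimal pairs, assigning to each $\mathbf{u}\notin\Psi_s(\mathcal{N})$ the shortest matching suffix. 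This makes the disjoint union explicit and the covering argument cleaner, at no cost. You also treat the paper's intermediate assertion $\mathcal{A}_u$ (which the paper establishes by induction) as the immediate reindexing identity $(\mathbf{i}+p^{s-t}\mathbf{n})+\mathbf{m}p^s=\mathbf{i}+(\mathbf{n}+p^t\mathbf{m})p^{s-t}$, which is indeed all that is needed and is slightly more economical. Both proofs are correct; yours is a tidier packaging of the same ideas.
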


\begin{lemme}\label{Assertion 3}
For all $s\in\mathbb{N}$, $s\geq 1$, and all $t\in\{0,\dots,s-1\}$, assertions $\alpha_s$ and $\beta_{t,s}$ imply assertion $\beta_{t+1,s}$.
\end{lemme}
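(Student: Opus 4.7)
The plan is to derive $\beta_{t+1,s}$ from $\beta_{t,s}$ by peeling off one more $p$-adic digit of the summation index and absorbing the resulting error terms using $\alpha_s$ together with the sharp hypotheses $(iii)(a_1)$ and $(iii)(a_2)$. In the right-hand side of $\beta_{t,s}$, each $\mathbf{j} \in \Psi_{s-t}(\mathcal{N})$ would be written uniquely as $\mathbf{j} = \mathbf{v} + p\mathbf{i}$ with $\mathbf{v} \in \{0,\dots,p-1\}^d$ and $\mathbf{i} \in \{0,\dots,p^{s-t-1}-1\}^d$. The suffix-avoidance description of $\Psi$ forces $\mathbf{i} \in \Psi_{s-t-1}(\mathcal{N})$, so that $\mathbf{j}$ runs precisely over those pairs $(\mathbf{v},\mathbf{i})$ with $\mathbf{i} \in \Psi_{s-t-1}(\mathcal{N})$ and $\mathbf{v}+p\mathbf{i} \in \Psi_{s-t}(\mathcal{N})$.

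The first main step is to replace the ratio $\mathbf{A}_{t+r+1}(\mathbf{j}+\mathbf{m}p^{s-t})/\mathbf{A}_{t+r+1}(\mathbf{j})$ in $\beta_{t,s}$ by $\mathbf{A}_{t+r+2}(\mathbf{i}+\mathbf{m}p^{s-t-1})/\mathbf{A}_{t+r+2}(\mathbf{i})$. Since $\mathbf{j} \in \Psi_{s-t}(\mathcal{N})$, hypothesis $(iii)(a_1)$, applied with $(r,s,\mathbf{u},\mathbf{v})$ replaced by $(t+r+1, s-t-1, \mathbf{i}, \mathbf{v})$, shows that the difference of these two ratios lies in $p^{s-t}\mathbf{g}_{s+r+1}(\mathbf{m})/\mathbf{g}_{t+r+1}(\mathbf{j})\,\mathcal{O}$. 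Multiplying by $\mathbf{S}_r(\mathbf{a},\mathbf{K},t,p,\mathbf{j})$, which by $\alpha_s$ (applicable since $t \leq s-1$) lies in $p^{t+1}\mathbf{g}_{t+r+1}(\mathbf{j})\mathcal{O}$, makes the factor $\mathbf{g}_{t+r+1}(\mathbf{j})$ cancel and leaves an error in $p^{s+1}\mathbf{g}_{s+r+1}(\mathbf{m})\mathcal{O}$, which is exactly the desired modulus.

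The second step is to regroup the sum by fixing $\mathbf{i} \in \Psi_{s-t-1}(\mathcal{N})$ and extending the inner sum over $\mathbf{v}$ from $\{\mathbf{v} : \mathbf{v}+p\mathbf{i} \in \Psi_{s-t}(\mathcal{N})\}$ to the full cube $\{0,\dots,p-1\}^d$. Once this completion is justified, the elementary identity $\sum_{\mathbf{v}\in\{0,\dots,p-1\}^d} \mathbf{S}_r(\mathbf{a},\mathbf{K},t,p,\mathbf{v}+p\mathbf{i}) = \mathbf{S}_r(\mathbf{a},\mathbf{K},t+1,p,\mathbf{i})$, obtained by subdividing the defining range of $\mathbf{S}_r$ into blocks of length $p^t$, produces the right-hand side of $\beta_{t+1,s}$. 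The terms added by the completion correspond exactly to $\mathbf{v}$ with $\mathbf{v}+p\mathbf{i} \notin \Psi_{s-t}(\mathcal{N})$; for each of these, hypothesis $(iii)(a_2)$ under the same parameter shift bounds $\mathbf{A}_{t+r+2}(\mathbf{i}+\mathbf{m}p^{s-t-1})/\mathbf{A}_{t+r+2}(\mathbf{i})$ by $p^{s-t}\mathbf{g}_{s+r+1}(\mathbf{m})/\mathbf{g}_{t+r+1}(\mathbf{v}+p\mathbf{i})\,\mathcal{O}$, and multiplication by $\mathbf{S}_r(\mathbf{a},\mathbf{K},t,p,\mathbf{v}+p\mathbf{i}) \in p^{t+1}\mathbf{g}_{t+r+1}(\mathbf{v}+p\mathbf{i})\mathcal{O}$ (via $\alpha_s$ again) lands these contributions in $p^{s+1}\mathbf{g}_{s+r+1}(\mathbf{m})\mathcal{O}$.

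The main obstacle will be the careful tracking of denominators in the $p$-adic estimates: the weaker form $(iii)(a)$ alone would leave an error involving $\mathbf{g}_{t+r+1}(\mathbf{j})/\mathbf{A}_{t+r+1}(\mathbf{j})$, which in general does not lie in $\mathcal{O}$. It is exactly the combinatorial dichotomy $\mathbf{v}+p\mathbf{i} \in \Psi_{s-t}(\mathcal{N})$ versus $\mathbf{v}+p\mathbf{i} \notin \Psi_{s-t}(\mathcal{N})$ that splits the index set into two complementary regimes and lets us invoke either the sharpening $(iii)(a_1)$ or the sharpening $(iii)(a_2)$ with matching denominators $\mathbf{g}_{t+r+1}(\mathbf{v}+p\mathbf{i})$; the $\alpha_s$-estimate on $\mathbf{S}_r(\mathbf{a},\mathbf{K},t,p,\mathbf{v}+p\mathbf{i})$ then supplies exactly the matching numerator. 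Everything else is bookkeeping: verifying the parameter substitutions, collecting the valuations, and identifying the subdivided sum as $\mathbf{S}_r(\mathbf{a},\mathbf{K},t+1,p,\mathbf{i})$.
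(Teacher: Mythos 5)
Your proposal is correct and follows essentially the same argument as the paper: both decompose $\mathbf{j}\in\Psi_{s-t}(\mathcal{N})$ as a leading digit plus $p$ times a prefix, invoke $\alpha_s$ to produce the factor $p^{t+1}\mathbf{g}_{t+r+1}(\cdot)$, apply $(a_1)$ on the indices with $\mathbf{v}+p\mathbf{i}\in\Psi_{s-t}(\mathcal{N})$ and $(a_2)$ on those with $\mathbf{v}+p\mathbf{i}\notin\Psi_{s-t}(\mathcal{N})$ (using the implication $\mathbf{v}+p\mathbf{i}\in\Psi_{s-t}(\mathcal{N})\Rightarrow\mathbf{i}\in\Psi_{s-t-1}(\mathcal{N})$ for the completion), and finish with the block identity $\sum_{\mathbf{v}}\mathbf{S}_r(\mathbf{a},\mathbf{K},t,p,\mathbf{v}+p\mathbf{i})=\mathbf{S}_r(\mathbf{a},\mathbf{K},t+1,p,\mathbf{i})$. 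The only difference is presentational: the paper bounds the explicit difference $X$, whereas you phrase it as a two-step replacement and completion, but the estimates and parameter substitutions are identical.
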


Before proving these lemmas, we check that their validity implies \eqref{TheBut}. We prove by induction on $s$ that $\alpha_s$ is true for all $s\geq 1$, which leads to the conclusion of Theorem \ref{theo généralisé}. According to Lemma \ref{Assertion 1}, $\alpha_1$ is true. Let us assume that $\alpha_s$ is true for a fixed $s\geq 1$. We note that $\beta_{0,s}$ is the assertion 
\begin{multline*}
\beta_{0,s}:{\mathbf S}_r({\mathbf a},{\mathbf K}+{\mathbf m}p^{s},s,p,{\mathbf m})\equiv\\
\sum_{{\mathbf j}\in\Psi_s(\mathcal{N})}\frac{{\mathbf A}_{r+1}({\mathbf j}+{\mathbf m}p^{s})}{{\mathbf A}_{r+1}({\mathbf j})}{\mathbf S}_r({\mathbf a},{\mathbf K},0,p,{\mathbf j})\mod p^{s+1}{\mathbf g}_{s+r+1}({\mathbf m})\mathcal{O}.
\end{multline*}
As ${\mathbf S}_r({\mathbf a},{\mathbf K},0,p,{\mathbf j})={\mathbf U}_r({\mathbf a},{\mathbf K},p,{\mathbf j})$, we have
$$
\sum_{{\mathbf j}\in\Psi_s(\mathcal{N})}\frac{{\mathbf A}_{r+1}({\mathbf j}+{\mathbf m}p^{s})}{{\mathbf A}_{r+1}({\mathbf j})}{\mathbf S}_r({\mathbf a},{\mathbf K},0,p,{\mathbf j})=\sum_{{\mathbf j}\in\Psi_s(\mathcal{N})}\frac{{\mathbf A}_{r+1}({\mathbf j}+{\mathbf m}p^{s})}{{\mathbf A}_{r+1}({\mathbf j})}{\mathbf U}_r({\mathbf a},{\mathbf K},p,{\mathbf j})
$$
and, according to Lemma \ref{Assertion 2}, we get
\begin{align}
\sum_{{\mathbf j}\in\Psi_s(\mathcal{N})}\frac{{\mathbf A}_{r+1}({\mathbf j}+{\mathbf m}p^{s})}{{\mathbf A}_{r+1}({\mathbf j})}{\mathbf U}_r&({\mathbf a},{\mathbf K},p,{\mathbf j})\notag\\
&\equiv\sum_{{\mathbf j}\in\Psi_s(\mathcal{N})}{\mathbf U}_r({\mathbf a},{\mathbf K}+{\mathbf m}p^{s},p,{\mathbf j}+{\mathbf m}p^{s})\mod p^{s+1}{\mathbf g}_{s+r+1}({\mathbf m})\mathcal{O}\notag\\
&\equiv {\mathbf S}_r({\mathbf a},{\mathbf K}+{\mathbf m}p^{s},s,p,{\mathbf m})\mod p^{s+1}{\mathbf g}_{s+r+1}({\mathbf m})\mathcal{O},\label{expli cond 0}
\end{align}
where \eqref{expli cond 0} is obtained \textit{via} Lemma \ref{iii 0}.

Hence, assertion $\beta_{0,s}$ is true. Then we get, according to Lemma \ref{Assertion 3}, the validity of $\beta_{1,s}$. By iteration of Lemma \ref{Assertion 3}, we finally obtain $\beta_{s,s}$ which is 
\begin{align}
{\mathbf S}_r({\mathbf a},{\mathbf K}+{\mathbf m}p^{s},s,p,{\mathbf m})
&\equiv\sum_{{\mathbf j}\in\Psi_{0}(\mathcal{N})}\frac{{\mathbf A}_{s+r+1}({\mathbf j}+{\mathbf m})}{{\mathbf A}_{s+r+1}({\mathbf j})}{\mathbf S}_r({\mathbf a},{\mathbf K},s,p,{\mathbf j})\mod p^{s+1}{\mathbf g}_{s+r+1}({\mathbf m})\mathcal{O}\notag\\
&\equiv\frac{{\mathbf A}_{s+r+1}({\mathbf m})}{{\mathbf A}_{s+r+1}({\mathbf 0})}{\mathbf S}_r({\mathbf a},{\mathbf K},s,p,{\mathbf 0})\mod p^{s+1}{\mathbf g}_{s+r+1}({\mathbf m})\mathcal{O}\label{assert beta s},
\end{align}
where we used the fact that $\Psi_{0}(\mathcal{N})=\{{\mathbf 0}\}$ for \eqref{assert beta s}. 

We will now prove that, for all ${\mathbf a}\in\{0,\dots,p-1\}^d$, $r\in\mathbb{N}$ and ${\mathbf K}\in\mathbb{Z}^d$, we have ${\mathbf S}_r({\mathbf a},{\mathbf K},s,p,{\mathbf 0})\in p^{s+1}\mathcal{O}$. For all ${\mathbf N}\in\mathbb{Z}^d$, we write $\textsl{P}_{{\mathbf N}}$ for the assertion: ``for all ${\mathbf a}\in\{0,\dots,p-1\}^d$ and $r\in\mathbb{N}$, we have $S_r({\mathbf a},{\mathbf N},s,p,{\mathbf 0})\in p^{s+1}\mathcal{O}$''. If there exists $i\in\{1,\dots,d\}$ such that $N_i<0$, then, for all ${\mathbf j}\in\{0,\dots,p^s-1\}^d$, we have ${\mathbf A}_r({\mathbf a}+p({\mathbf N}-{\mathbf j}))=0$ and ${\mathbf A}_{r+1}({\mathbf N}-{\mathbf j})=0$ so that $S_r({\mathbf a},{\mathbf N},s,p,{\mathbf 0})=0\in p^{s+1}\mathcal{O}$. First, we prove by contradiction that, for all $\mathbf{N}\in\mathbb{Z}^d$, $\textsl{P}_{\mathbf{N}}$ is true. Let us assume that there is a minimal element ${\mathbf N}\in\mathbb{N}^d$ such that $\textsl{P}_{{\mathbf N}}$ is false. Given ${\mathbf m}\in\mathbb{N}^d\setminus\{{\mathbf 0}\}$ and ${\mathbf N}':={\mathbf N}-{\mathbf m}p^s$ and applying \eqref{assert beta s} with ${\mathbf N}'$ instead of ${\mathbf K}$, we get
$$
{\mathbf S}_r({\mathbf a},{\mathbf N},s,p,{\mathbf m})\equiv\frac{{\mathbf A}_{s+r+1}({\mathbf m})}{{\mathbf A}_{s+r+1}({\mathbf 0})}{\mathbf S}_r({\mathbf a},{\mathbf N}',s,p,{\mathbf 0})\mod p^{s+1}{\mathbf g}_{s+r+1}({\mathbf m})\mathcal{O}.
$$
As ${\mathbf m}\in \mathbb{N}^d\setminus\{{\mathbf 0}\}$, we have ${\mathbf N}'<{\mathbf N}$, which, according to the definition of ${\mathbf N}$, leads to ${\mathbf S}_r({\mathbf a},{\mathbf N}',s,p,{\mathbf 0})\in p^{s+1}\mathcal{O}$. According to conditions $(i)$ and $(ii)$, we have $|{\mathbf A}_{s+r+1}({\mathbf 0})|_p=1$ and ${\mathbf A}_{s+r+1}({\mathbf m})\in{\mathbf g}_{s+r+1}({\mathbf m})\mathcal{O}$, so we get ${\mathbf S}_r({\mathbf a},{\mathbf N},s,p,{\mathbf m})\in p^{s+1}{\mathbf g}_{s+r+1}({\mathbf m})\mathcal{O}\subset p^{s+1}\mathcal{O}$. Thereby, for all ${\mathbf m}\in\mathbb{N}^d\setminus\{{\mathbf 0}\}$, we have ${\mathbf S}_r({\mathbf a},{\mathbf N},s,p,{\mathbf m})\in p^{s+1}\mathcal{O}$. Given ${\mathbf T}\in\mathbb{N}^d$ such that, for all $i\in\{1,\dots,d\}$ we have $(T_i+1)p^s>N_i$, we get
\begin{align}
\sum_{{\mathbf 0}\leq{\mathbf m}\leq{\mathbf T}} {\mathbf S}_r({\mathbf a}&,{\mathbf N},s,p,{\mathbf m})\notag\\
&=\sum_{{\mathbf 0}\leq{\mathbf m}\leq{\mathbf T}}\sum_{{\mathbf m}p^s\leq{\mathbf j}\leq({\mathbf m}+{\mathbf 1})p^s-{\mathbf 1}}\left({\mathbf A}_r({\mathbf a}+p({\mathbf N}-{\mathbf j})){\mathbf A}_{r+1}({\mathbf j})-{\mathbf A}_{r+1}({\mathbf N}-{\mathbf j}){\mathbf A}_r({\mathbf a}+{\mathbf j}p)\right)\notag\\
&=\sum_{{\mathbf 0}\leq{\mathbf j}\leq{\mathbf N}}\left({\mathbf A}_r({\mathbf a}+p({\mathbf N}-{\mathbf j})){\mathbf A}_{r+1}({\mathbf j})-{\mathbf A}_{r+1}({\mathbf N}-{\mathbf j}){\mathbf A}_r({\mathbf a}+{\mathbf j}p)\right)\label{rappel conv}\\
&=0\label{expli somme 0},
\end{align}
where we used the fact that ${\mathbf A}_r({\mathbf n})=0$ when there is an $i\in\{1,\dots,d\}$ such that $n_i<0$ for \eqref{rappel conv}, and \eqref{expli somme 0} occurs because the term of sum \eqref{rappel conv} is changed into its opposite when changing the index ${\mathbf j}$ in ${\mathbf N}-{\mathbf j}$. So we obtain ${\mathbf S}_r({\mathbf a},{\mathbf N},s,p,{\mathbf 0})=-\sum_{{\mathbf 0}<{\mathbf m}\leq{\mathbf T}}{\mathbf S}_r({\mathbf a},{\mathbf N},s,p,{\mathbf m})\in p^{s+1}\mathcal{O}$,
which is contradictory to the status of ${\mathbf N}$. Thus, for all ${\mathbf N}\in\mathbb{Z}^d$, $\textsl{P}_{{\mathbf N}}$ is true.

Furthermore, conditions $(i)$ and $(ii)$ respectively lead to 
$$
|{\mathbf A}_{s+r+1}({\mathbf 0})|_p=1\quad\textup{and}\quad{\mathbf A}_{s+r+1}({\mathbf m})\in{\mathbf g}_{s+r+1}({\mathbf m})\mathcal{O}.
$$
Then we obtain, according to \eqref{assert beta s}, that ${\mathbf S}_r({\mathbf a},{\mathbf K}+{\mathbf m}p^s,s,p,{\mathbf m})\in p^{s+1}{\mathbf g}_{s+r+1}({\mathbf m})\mathcal{O}$. This latest congruence is valid for all ${\mathbf a}\in\{0,\dots,p-1\}^d$, ${\mathbf K}\in\mathbb{Z}^d$, ${\mathbf m}\in\mathbb{N}^d$ and $r\geq 0$, which proves that the assertion $\alpha_{s+1}$ is true and completes the induction on $s$. We now have to prove Lemmas \ref{Assertion 1}, \ref{Assertion 2}, \ref{iii 0} and \ref{Assertion 3}.

\subsubsection{Proof of Lemma \ref{Assertion 1}}
Given ${\mathbf a}\in\{0,\dots,p-1\}^d$, ${\mathbf K}\in\mathbb{Z}^d$, ${\mathbf m}\in\mathbb{N}^d$ and $r\geq 0$, we have 
\begin{equation}\label{refref}
{\mathbf S}_r({\mathbf a},{\mathbf K},0,p,{\mathbf m})={\mathbf A}_r({\mathbf a}+p({\mathbf K}-{\mathbf m})){\mathbf A}_{r+1}({\mathbf m})-{\mathbf A}_{r+1}({\mathbf K}-{\mathbf m}){\mathbf A}_r({\mathbf a}+p{\mathbf m}).
\end{equation}
If ${\mathbf K}-{\mathbf m}\notin\mathbb{N}^d$, then we have ${\mathbf A}_r({\mathbf a}+p({\mathbf K}-{\mathbf m}))=0$ and ${\mathbf A}_{r+1}({\mathbf K}-{\mathbf m})=0$ so that ${\mathbf S}_r({\mathbf a},{\mathbf K},0,p,{\mathbf m})=0\in p{\mathbf g}_{r+1}({\mathbf m})\mathcal{O}$, as expected. Thus we can assume that ${\mathbf K}-{\mathbf m}\in\mathbb{N}^d$. We rewrite \eqref{refref} as follows. 
\begin{multline}
{\mathbf S}_r({\mathbf a},{\mathbf K},0,p,{\mathbf m})={\mathbf A}_{r}({\mathbf a})\Bigg({\mathbf A}_{r+1}({\mathbf m})\left(\frac{{\mathbf A}_{r}({\mathbf a}+p({\mathbf K}-{\mathbf m}))}{{\mathbf A}_{r}({\mathbf a})}-\frac{{\mathbf A}_{r+1}({\mathbf K}-{\mathbf m})}{{\mathbf A}_{r+1}({\mathbf 0})}\right)\label{der0}\\-{\mathbf A}_{r+1}({\mathbf K}-{\mathbf m})\left(\frac{{\mathbf A}_{r}({\mathbf a}+{\mathbf m}p)}{{\mathbf A}_{r}({\mathbf a})}-\frac{{\mathbf A}_{r+1}({\mathbf m})}{{\mathbf A}_{r+1}({\mathbf 0})}\right)\Bigg).
\end{multline}
As $\Psi_{0}(\mathcal{N})=\{{\mathbf 0}\}$, we can use $(a)$, with ${\mathbf 0}$ instead of ${\mathbf u}$ and ${\mathbf a}$ instead of ${\mathbf v}$, to obtain
$$
\frac{{\mathbf A}_{r}({\mathbf a}+p({\mathbf K}-{\mathbf m}))}{{\mathbf A}_{r}({\mathbf a})}-\frac{{\mathbf A}_{r+1}({\mathbf K}-{\mathbf m})}{{\mathbf A}_{r+1}({\mathbf 0})}\in p\frac{{\mathbf g}_{r+1}({\mathbf K}-{\mathbf m})}{{\mathbf A}_r({\mathbf a})}\mathcal{O}
$$
and
$$
\frac{{\mathbf A}_{r}({\mathbf a}+{\mathbf m}p)}{{\mathbf A}_{r}({\mathbf a})}-\frac{{\mathbf A}_{r+1}({\mathbf m})}{{\mathbf A}_{r+1}({\mathbf 0})}\in p\frac{{\mathbf g}_{r+1}({\mathbf m})}{{\mathbf A}_r({\mathbf a})}\mathcal{O}.
$$
This leads to
\begin{align}
{\mathbf A}_r({\mathbf a}){\mathbf A}_{r+1}({\mathbf m})\left(\frac{{\mathbf A}_{r}({\mathbf a}+p({\mathbf K}-{\mathbf m}))}{{\mathbf A}_{r}({\mathbf a})}-\frac{{\mathbf A}_{r+1}({\mathbf K}-{\mathbf m})}{{\mathbf A}_{r+1}({\mathbf 0})}\right)
&\in p{\mathbf g}_{r+1}({\mathbf K}-{\mathbf m}){\mathbf A}_{r+1}({\mathbf m})\mathcal{O}\notag\\
&\subset p{\mathbf g}_{r+1}({\mathbf m})\mathcal{O}\label{der1}
\end{align}
and
\begin{align}\label{der2}
{\mathbf A}_r({\mathbf a}){\mathbf A}_{r+1}({\mathbf K}-{\mathbf m})\left(\frac{{\mathbf A}_{r}({\mathbf a}+{\mathbf m}p)}{{\mathbf A}_{r}({\mathbf a})}-\frac{{\mathbf A}_{r+1}({\mathbf m})}{{\mathbf A}_{r+1}({\mathbf 0})}\right)
&\in p{\mathbf g}_{r+1}({\mathbf m}){\mathbf A}_{r+1}({\mathbf K}-{\mathbf m})\mathcal{O}\notag\\
&\subset p{\mathbf g}_{r+1}({\mathbf m})\mathcal{O},
\end{align}
where we used condition $(ii)$ for \eqref{der1} and \eqref{der2}, which leads to ${\mathbf A}_{r+1}({\mathbf m})\in{\mathbf g}_{r+1}({\mathbf m})\mathcal{O}$ and ${\mathbf A}_{r+1}({\mathbf K}-{\mathbf m})\in{\mathbf g}_{r+1}({\mathbf K}-{\mathbf m})\mathcal{O}\subset\mathcal{O}$. Applying \eqref{der1} and \eqref{der2} to \eqref{der0}, we obtain ${\mathbf S}_r({\mathbf a},{\mathbf K},0,p,{\mathbf m})\in p{\mathbf g}_{r+1}({\mathbf m})$, which finishes the proof of the lemma.

\subsubsection{Proof of Lemma \ref{Assertion 2}}
We have
\begin{multline}\label{wesh wesh yo}
{\mathbf U}_r({\mathbf a},{\mathbf K}+{\mathbf m}p^{s},p,{\mathbf j}+{\mathbf m}p^{s})-\frac{{\mathbf A}_{r+1}({\mathbf j}+{\mathbf m}p^{s})}{{\mathbf A}_{r+1}({\mathbf j})}{\mathbf U}_r({\mathbf a},{\mathbf K},p,{\mathbf j})\\
=-{\mathbf A}_{r+1}({\mathbf K}-{\mathbf j}){\mathbf A}_r({\mathbf a}+{\mathbf j}p)\left(\frac{{\mathbf A}_r({\mathbf a}+{\mathbf j}p+{\mathbf m}p^{s+1})}{{\mathbf A}_r({\mathbf a}+{\mathbf j}p)}-\frac{{\mathbf A}_{r+1}({\mathbf j}+{\mathbf m}p^{s})}{{\mathbf A}_{r+1}({\mathbf j})}\right).
\end{multline}
As ${\mathbf j}\in\Psi_s(\mathcal{N})$, hypothesis $(a)$ implies that the right-hand side of equality \eqref{wesh wesh yo} lies in 
$$
{\mathbf A}_{r+1}({\mathbf K}-{\mathbf j}){\mathbf A}_r({\mathbf a}+{\mathbf j}p)p^{s+1}\frac{{\mathbf g}_{s+r+1}({\mathbf m})}{{\mathbf A}_r({\mathbf a}+{\mathbf j}p)}\mathcal{O}. 
$$
Furthermore, according to condition $(ii)$, we have ${\mathbf A}_{r+1}({\mathbf K}-{\mathbf j})\in{\mathbf g}_{r+1}({\mathbf K}-{\mathbf j})\mathcal{O}\subset\mathcal{O}$. These estimates prove that the left-hand side of \eqref{wesh wesh yo} lies in $p^{s+1}{\mathbf g}_{s+r+1}({\mathbf m})\mathcal{O}$, which completes the proof of the lemma.

\subsubsection{Proof of Lemma \ref{iii 0}}
Let us fix $r,s\in\mathbb{N}$, $s\geq 1$, such that $\alpha_s$ is true. 

For all $u\in\{0,\dots,s\}$, we write $\mathcal{A}_u$ for the assertion: for all ${\mathbf n}\in\{0,\dots,p^{s-u}-1\}^d$, we have
$$
\sum_{{\mathbf 0}\leq{\mathbf j}\leq (p^{u}-1){\mathbf 1}}{\mathbf U}_r({\mathbf a},{\mathbf K},p,{\mathbf j}+{\mathbf n}p^{u}+{\mathbf m}p^s)={\mathbf S}_r({\mathbf a},{\mathbf K},u,p,{\mathbf n}+{\mathbf m}p^{s-u}).
$$
We will prove by induction on $u$ that, for all $u\in\{0,\dots,s\}$, the assertion $\mathcal{A}_u$ is true. 

If $u=0$, then there is nothing to prove so $\mathcal{A}_0$ is true. Let $u\in\{0,\dots,s-1\}$ such that $\mathcal{A}_u$ is true. Let us prove that $\mathcal{A}_{u+1}$ is true. For all ${\mathbf n}\in\{0,\dots,p^{s-u-1}-1\}^d$, we have
\begin{align}
{\mathbf S}_r({\mathbf a},{\mathbf K},u+1,&p,{\mathbf n}+{\mathbf m}p^{s-u-1})
=\sum_{{\mathbf 0}\leq{\mathbf v}\leq(p-1){\mathbf 1}}{\mathbf S}_r({\mathbf a},{\mathbf K},u,p,{\mathbf v}+{\mathbf n}p+{\mathbf m}p^{s-u})\notag\\
&=\sum_{{\mathbf 0}\leq{\mathbf v}\leq(p-1){\mathbf 1}}\sum_{{\mathbf 0}\leq{\mathbf j}\leq (p^{u}-1){\mathbf 1}}{\mathbf U}_r({\mathbf a},{\mathbf K},p,{\mathbf j}+{\mathbf v}p^{u}+{\mathbf n}p^{u+1}+{\mathbf m}p^s)\label{floux1}\\
&=\sum_{{\mathbf 0}\leq{\mathbf j}\leq (p^{u+1}-1){\mathbf 1}}{\mathbf U}_r({\mathbf a},{\mathbf K},p,{\mathbf j}+{\mathbf n}p^{u+1}+{\mathbf m}p^s)\label{floux2},
\end{align}
where we used assertion $\mathcal{A}_u$ for \eqref{floux1}. Equality \eqref{floux2} proves that $\mathcal{A}_{u+1}$ is true, which finishes the induction on $u$. 

If $\Psi_s(\mathcal{N})=\{0,\dots,p^s-1\}^d$, then Lemma \ref{iii 0} is trivial. In the sequel of this proof, we assume that $\Psi_{s}(\mathcal{N})\neq\{0,\dots,p^s-1\}^d$. We have ${\mathbf u}\in\{0,\dots,p^s-1\}^d\setminus\Psi_s(\mathcal{N})$ if and only if there exists $({\mathbf n},t)\in\mathcal{N}$, $t\leq s$, and ${\mathbf j}\in\{0,\dots,p^{s-t}-1\}^d$ such that ${\mathbf u}={\mathbf j}+p^{s-t}{\mathbf n}$. We write $\mathcal{N}_s$ the set of all $({\mathbf n},t)\in\mathcal{N}$ with $t\leq s$. So we have 
$$
\{0,\dots,p^s-1\}^d\setminus\Psi_s(\mathcal{N})=\bigcup_{({\mathbf n},t)\in\mathcal{N}_s}\{{\mathbf j}+p^{s-t}{\mathbf n}\,:\,{\mathbf j}\in\{0,\dots,p^{s-t}-1\}^d\}.
$$
In particular, the set $\mathcal{N}_s$ is nonempty.

We will prove that there exists $k\in\mathbb{N}$, $k\geq 1$, and $({\mathbf n}_1,t_1),\dots,({\mathbf n}_k,t_k)\in\mathcal{N}_s$ such that the sets $J({\mathbf n}_i,t_i):=\{{\mathbf j}+p^{s-t_i}{\mathbf n}_i\,:\,{\mathbf j}\in\{0,\dots,p^{s-t_i}-1\}^d\}$ induce a partition of $\{0,\dots,p^s-1\}^d\setminus\Psi_s(\mathcal{N})$. We observe that $\mathcal{N}_s\subset\bigcup_{t=1}^s\left(\{0,\dots,p^t-1\}\times\{t\}\right)$ and thus $\mathcal{N}_s$ is finite. Therefore, we only have to prove that if $({\mathbf n},t),({\mathbf n}',t')\in\mathcal{N}_s$, ${\mathbf j}\in\{0,\dots,p^{s-t}-1\}^d$ and ${\mathbf j}'\in\{0,\dots,p^{s-t'}-1\}^d$ verify ${\mathbf j}+p^{s-t}{\mathbf n}={\mathbf j}'+p^{s-t'}{\mathbf n}'$, then we have $J({\mathbf n},t)\subset J({\mathbf n}',t')$ or $J({\mathbf n}',t')\subset J({\mathbf n},t)$. Let us assume, for example, that $t\leq t'$. Then there exists ${\mathbf j}_0\in\{0,\dots,p^{t'-t}-1\}^d$ such that ${\mathbf j}={\mathbf j}'+p^{s-t'}{\mathbf j}_0$, so that $p^{s-t'}{\mathbf n}'=p^{s-t}{\mathbf n}+p^{s-t'}{\mathbf j}_0$ and thus $J({\mathbf n}',t')\subset J({\mathbf n},t)$. Also, if $t\geq t'$, then we have $J({\mathbf n},t)\subset J({\mathbf n}',t')$. Thus, we get
\begin{equation}\label{joint2}
{\mathbf S}_r({\mathbf a},{\mathbf K},s,p,{\mathbf m})=\sum_{{\mathbf j}\in\Psi_s(\mathcal{N})}{\mathbf U}_r({\mathbf a},{\mathbf K},p,{\mathbf j}+{\mathbf m}p^s)+\sum_{{\mathbf j}\in\{0,\dots,p^s-1\}^d\setminus\Psi_s(\mathcal{N})}{\mathbf U}_r({\mathbf a},{\mathbf K},p,{\mathbf j}+{\mathbf m}p^s),
\end{equation}
with
\begin{equation}\label{joint1}
\sum_{{\mathbf j}\in\{0,\dots,p^s-1\}^d\setminus\Psi_s(\mathcal{N})}{\mathbf U}_r({\mathbf a},{\mathbf K},p,{\mathbf j}+{\mathbf m}p^s)=\sum_{i=1}^k\sum_{{\mathbf j}\in\{0,\dots,p^{s-t_i}-1\}^d}{\mathbf U}_r({\mathbf a},{\mathbf K},p,{\mathbf j}+p^{s-t_i}{\mathbf n}_i+{\mathbf m}p^s).
\end{equation}
We now prove that, for all $i\in\{1,\dots,k\}$, we have
\begin{equation}\label{étoile 287}
\sum_{{\mathbf j}\in\{0,\dots,p^{s-t_i}-1\}^d}{\mathbf U}_r({\mathbf a},{\mathbf K},p,{\mathbf j}+p^{s-t_i}{\mathbf n}_i+{\mathbf m}p^s)\in p^{s+1}{\mathbf g}_{s+r+1}({\mathbf m})\mathcal{O}.
\end{equation}

Given $i\in\{1,\dots,k\}$, assertion $\mathcal{A}_{s-t_i}$ leads to
$$
\sum_{{\mathbf 0}\leq{\mathbf j}\leq(p^{s-t_i}-1){\mathbf 1}}{\mathbf U}_r({\mathbf a},{\mathbf K},p,{\mathbf j}+p^{s-t_i}{\mathbf n}_i+{\mathbf m}p^s)={\mathbf S}_r({\mathbf a},{\mathbf K},s-t_i,p,{\mathbf n}_i+{\mathbf m}p^{t_i}).
$$
As $t_i\geq 1$, we get, \textit{via} $\alpha_s$, that 
$$
{\mathbf S}_r({\mathbf a},{\mathbf K},s-t_i,p,{\mathbf n}_i+{\mathbf m}p^{t_i})\in p^{s-t_i+1}{\mathbf g}_{s-t_i+r+1}({\mathbf n}_i+{\mathbf m}p^{t_i})\mathcal{O}.
$$
Applying assertion $(b)$ with $t_i$ instead of $t$ and $r+s-t_i+1$ instead of $r$, we obtain 
$$
p^{s-t_i+1}{\mathbf g}_{s-t_i+r+1}({\mathbf n}_i+{\mathbf m}p^{t_i})\in p^{s-t_i+1}p^{t_i}{\mathbf g}_{s+r+1}({\mathbf m})\mathcal{O}=p^{s+1}{\mathbf g}_{s+r+1}({\mathbf m})\mathcal{O}.
$$
Thus, for all $i\in\{1,\dots,k\}$, we have \eqref{étoile 287}.

Congruence \eqref{étoile 287}, associated with \eqref{joint1} and \eqref{joint2}, proves that
$$
{\mathbf S}_r({\mathbf a},{\mathbf K},s,p,{\mathbf m})\equiv\sum_{{\mathbf j}\in\Psi_s(\mathcal{N})}{\mathbf U}_r({\mathbf a},{\mathbf K},p,{\mathbf j}+{\mathbf m}p^s)\mod p^{s+1}{\mathbf g}_{s+r+1}({\mathbf m})\mathcal{O},
$$
which completes the proof of Lemma \ref{iii 0}.
\medskip

\subsubsection{Proof of Lemma \ref{Assertion 3}}
During this proof, ${\mathbf i}$ indicates an element of $\{0,\dots,p-1\}^d$ and ${\mathbf u}$ indicates an element of $\{0,\dots,p^{s-t-1}-1\}^d$. For $t<s$, we write $\beta_{t,s}$ as follows
\begin{multline}
{\mathbf S}_r({\mathbf a},{\mathbf K}+{\mathbf m}p^{s},s,p,{\mathbf m})\equiv\\
\sum_{{\mathbf i}+{\mathbf u}p\in\Psi_{s-t}(\mathcal{N})}\frac{{\mathbf A}_{t+r+1}({\mathbf i}+{\mathbf u} p+{\mathbf m}p^{s-t})}{{\mathbf A}_{t+r+1}({\mathbf i}+{\mathbf u} p)}{\mathbf S}_r({\mathbf a},{\mathbf K},t,p,{\mathbf i}+{\mathbf u} p)\mod p^{s+1}{\mathbf g}_{s+r+1}({\mathbf m})\mathcal{O}.\label{beta t s}
\end{multline}
We want to prove $\beta_{t+1,s}$, which is
\begin{multline*}
{\mathbf S}_r({\mathbf a},{\mathbf K}+{\mathbf m}p^{s},s,p,{\mathbf m})\equiv\\
\sum_{{\mathbf u}\in\Psi_{s-t-1}(\mathcal{N})}\frac{{\mathbf A}_{t+r+2}({\mathbf u}+{\mathbf m}p^{s-t-1})}{{\mathbf A}_{t+r+2}({\mathbf u})}{\mathbf S}_r({\mathbf a},{\mathbf K},t+1,p,{\mathbf u})\mod p^{s+1}{\mathbf g}_{s+r+1}({\mathbf m})\mathcal{O}.
\end{multline*}
We note that ${\mathbf S}_r({\mathbf a},{\mathbf K},t+1,p,{\mathbf u})=\sum_{{\mathbf 0}\leq{\mathbf i}\leq(p-1){\mathbf 1}}{\mathbf S}_r({\mathbf a},{\mathbf K},t,p,{\mathbf i}+{\mathbf u} p)$. Thus, writing
\begin{multline*}
X:={\mathbf S}_r({\mathbf a},{\mathbf K}+{\mathbf m}p^{s},s,p,{\mathbf m})\\
-\sum_{{\mathbf 0}\leq{\mathbf i}\leq(p-1){\mathbf 1}}\sum_{{\mathbf u}\in\Psi_{s-t-1}(\mathcal{N})}\frac{{\mathbf A}_{t+r+2}({\mathbf u}+{\mathbf m}p^{s-t-1})}{{\mathbf A}_{t+r+2}({\mathbf u})}{\mathbf S}_r({\mathbf a},{\mathbf K},t,p,{\mathbf i}+{\mathbf u} p),
\end{multline*}
we only have to prove that $X\in p^{s+1}{\mathbf g}_{s+r+1}({\mathbf m})\mathcal{O}$. We have 
\begin{equation}\label{Psi+1}
{\mathbf i}+{\mathbf u}p\in\Psi_{s-t}(\mathcal{N})\Rightarrow{\mathbf u}\in\Psi_{s-t-1}(\mathcal{N}).
\end{equation}
Indeed, if ${\mathbf u}\notin\Psi_{s-t-1}(\mathcal{N})$, then there exists $({\mathbf n},k)\in\mathcal{N}$, $k\leq s-t-1$, and ${\mathbf j}\in\{0,\dots,p^{s-t-1-k}-1\}^d$ such that ${\mathbf u}={\mathbf j}+p^{s-t-1-k}{\mathbf n}$. Thus we have ${\mathbf i}+{\mathbf u}p={\mathbf i}+{\mathbf j}p+p^{s-t-k}{\mathbf n}$, which leads to ${\mathbf i}+{\mathbf u}p\notin\Psi_{s-t}(\mathcal{N})$. Hence, according to $\beta_{t,s}$ written as \eqref{beta t s}, we obtain 
\begin{multline*}
X\equiv \sum_{{\mathbf i}+{\mathbf u}p\in\Psi_{s-t}(\mathcal{N})}{\mathbf S}_r({\mathbf a},{\mathbf K},t,p,{\mathbf i}+{\mathbf u}p)\left(\frac{{\mathbf A}_{t+r+1}({\mathbf i}+{\mathbf u}p+{\mathbf m}p^{s-t})}{{\mathbf A}_{t+r+1}({\mathbf i}+{\mathbf u}p)}-\frac{{\mathbf A}_{t+r+2}({\mathbf u}+{\mathbf m}p^{s-t-1})}{{\mathbf A}_{t+r+2}({\mathbf u})}\right)\\
+\underset{{\mathbf i}+{\mathbf u}p\notin\Psi_{s-t}(\mathcal{N})}{\sum_{{\mathbf u}\in\Psi_{s-t-1}(\mathcal{N})}}\frac{{\mathbf A}_{t+r+2}({\mathbf u}+{\mathbf m}p^{s-t-1})}{{\mathbf A}_{t+r+2}({\mathbf u})}{\mathbf S}_r({\mathbf a},{\mathbf K},t,p,{\mathbf i}+{\mathbf u}p)\mod p^{s+1}{\mathbf g}_{s+r+1}({\mathbf m})\mathcal{O}.
\end{multline*}
Furthermore, applying $(a_1)$ with $s-t-1$ instead of $s$ and $t+r+1$ instead of $r$, we get
$$
\frac{{\mathbf A}_{t+r+1}({\mathbf i}+{\mathbf u}p+{\mathbf m}p^{s-t})}{{\mathbf A}_{t+r+1}({\mathbf i}+{\mathbf u} p)}-\frac{{\mathbf A}_{t+r+2}({\mathbf u}+{\mathbf m}p^{s-t-1})}{{\mathbf A}_{t+r+2}({\mathbf u})}\in p^{s-t}\frac{{\mathbf g}_{s+r+1}({\mathbf m})}{{\mathbf g}_{t+r+1}({\mathbf i}+{\mathbf u}p)}\mathcal{O}.
$$
In addition, as $t<s$ and since $\alpha_s$ is true, we have 
\begin{equation}\label{varii1}
{\mathbf S}_r({\mathbf a},{\mathbf K},t,p,{\mathbf i}+{\mathbf u}p)\in p^{t+1}{\mathbf g}_{t+r+1}({\mathbf i}+{\mathbf u} p)\mathcal{O}
\end{equation}
and we obtain 
\begin{equation}\label{varii2}
X\equiv \underset{{\mathbf i}+{\mathbf u}p\notin\Psi_{s-t}(\mathcal{N})}{\sum_{{\mathbf u}\in\Psi_{s-t-1}(\mathcal{N})}}\frac{{\mathbf A}_{t+r+2}({\mathbf u}+{\mathbf m}p^{s-t-1})}{{\mathbf A}_{t+r+2}({\mathbf u})}{\mathbf S}_r({\mathbf a},{\mathbf K},t,p,{\mathbf i}+{\mathbf u}p)\mod p^{s+1}{\mathbf g}_{s+r+1}({\mathbf m})\mathcal{O}.
\end{equation}

Finally, when ${\mathbf i}+{\mathbf u}p\notin\Psi_{s-t}(\mathcal{N})$, we can apply condition $(a_2)$ with $s-t-1$ instead of $s$, ${\mathbf i}$ instead of ${\mathbf v}$ and $r+t+1$ instead of $r$, which leads to
\begin{equation}\label{varii3}
\frac{{\mathbf A}_{t+r+2}({\mathbf u}+{\mathbf m}p^{s-t-1})}{{\mathbf A}_{t+r+2}({\mathbf u})}\in p^{s-t}\frac{{\mathbf g}_{s+r+1}({\mathbf m})}{{\mathbf g}_{t+r+1}({\mathbf i}+{\mathbf u}p)}\mathcal{O}.
\end{equation}
Applying \eqref{varii1} and \eqref{varii3} to \eqref{varii2}, we obtain $X\in p^{s+1}{\mathbf g}_{s+r+1}({\mathbf m})\mathcal{O}$. This finishes the proof of Lemma \ref{Assertion 3} and thus the one of Theorem \ref{theo généralisé}.

\section{A $p$-adic reformulation of Theorems \ref{critère}, \ref{critère2} and \ref{cas e>f}}\label{equiv crit}

Let $e$ and $f$ be two disjoint sequences of nonzero vectors in $\mathbb{N}^d$ such that $\mathcal{Q}_{e,f}$ is a family of integers. We fix ${\mathbf L}\in\mathcal{E}_{e,f}$ throughout this section. We recall that, for all $k\in\{1,\dots,d\}$, we have $q_{e,f,k}(\mathbf{z})\in z_k\mathbb{Z}[[{\mathbf z}]]$, respectively $q_{{\mathbf L},e,f}({\mathbf z})\in\mathbb{Z}[[{\mathbf z}]]$, if and only if, for all prime number $p$, we have $q_{e,f,k}(\mathbf{z})\in z_k\mathbb{Z}_p[[{\mathbf z}]]$, respectively $q_{{\mathbf L},e,f}({\mathbf z})\in\mathbb{Z}_p[[{\mathbf z}]]$. 

We will define, for all prime number $p$, two elements $\Phi_{p,k}({\mathbf a}+p{\mathbf K})$ and $\Phi_{{\mathbf L},p}({\mathbf a}+p{\mathbf K})$ of $\mathbb{Q}_p$, where ${\mathbf a}\in\{0,\dots,p-1\}^d$ and ${\mathbf K}\in\mathbb{N}^d$, and we will prove that $q_{e,f,k}(\mathbf{z})\in z_k\mathbb{Z}_p[[{\mathbf z}]]$, respectively $q_{{\mathbf L},e,f}({\mathbf z})\in\mathbb{Z}_p[[{\mathbf z}]]$, if and only if, for all ${\mathbf a}\in\{0,\dots,p-1\}^d$ and all ${\mathbf K}\in\mathbb{N}^d$, we have $\Phi_{p,k}({\mathbf a}+p{\mathbf K})\in p\mathbb{Z}_p$, respectively $\Phi_{{\mathbf L},p}({\mathbf a}+p{\mathbf K})\in p\mathbb{Z}_p$. 

To simplify notations, we will write $\mathcal{E}:=\mathcal{E}_{e,f}$, $\mathcal{D}:=\mathcal{D}_{e,f}$, $\Delta:=\Delta_{e,f}$, $\mathcal{Q}:=\mathcal{Q}_{e,f},F:=F_{e,f},G_k:=G_{e,f,k}$, $G_{{\mathbf L}}:=G_{{\mathbf L},e,f}$, $q_k:=q_{e,f,k}$ and $q_{{\mathbf L}}:=q_{{\mathbf L},e,f}$, as throughout the rest of the article. We fix a prime number $p$ in this section.

Before proving Theorems \ref{critère}, \ref{critère2} and \ref{cas e>f}, we will reformulate them. The following result is due to Krattenthaler and Rivoal's Lemma \cite[Lemma 2, p. 7]{Tanguy 2}; it is the analogue in several variables of a lemma of Dieudonné and Dwork \cite[Chap. IV, Sec. 2, Lemma~3]{Kobliz}; \cite[Chap. 14, Sec. 2]{Lang}.

\begin{lemme}\label{lemma 2 Tanguy}
Given two formal power series $F({\mathbf z})\in 1+\sum_{i=1}^dz_i\mathbb{Z}[[{\mathbf z}]]$ and $G({\mathbf z})\in\sum_{i=1}^dz_i\mathbb{Q}[[{\mathbf z}]]$, we define $q({\mathbf z}):=\exp(G({\mathbf z})/F({\mathbf z}))$. Then we have $q({\mathbf z})\in 1+\sum_{i=1}^dz_i\mathbb{Z}_p[[{\mathbf z}]]$ if and only if $F({\mathbf z})G({\mathbf z}^p)-pF({\mathbf z}^p)G({\mathbf z})\in p\sum_{i=1}^dz_i\mathbb{Z}_p[[{\mathbf z}]]$.
\end{lemme}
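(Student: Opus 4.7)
The plan is to reduce the statement to the classical Dieudonn\'e--Dwork criterion cited at the end of the excerpt, which in its multivariate form asserts that for any $q(\mathbf{z})\in 1+\sum_{i=1}^d z_i\mathbb{Q}_p[[\mathbf{z}]]$ one has $q(\mathbf{z})\in 1+\sum_{i=1}^d z_i\mathbb{Z}_p[[\mathbf{z}]]$ if and only if $q(\mathbf{z})^p/q(\mathbf{z}^p)\in 1+p\sum_{i=1}^d z_i\mathbb{Z}_p[[\mathbf{z}]]$. This multivariate form is a routine extension of the univariate Dieudonn\'e--Dwork lemma of \cite{Kobliz,Lang}: one substitutes $\mathbf{z}=(tz_1,\dots,tz_d)$ to reduce to a formal series in $t$ with coefficients in $\mathbb{Z}_p[[\mathbf{z}]]$, and then applies the univariate criterion coefficient by coefficient.

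With this in hand, I would set $H(\mathbf{z}):=G(\mathbf{z})/F(\mathbf{z})\in\sum_{i=1}^d z_i\mathbb{Q}[[\mathbf{z}]]$, which is well defined because $F$ has constant term $1$, hence is invertible in $\mathbb{Q}[[\mathbf{z}]]$. Since $q(\mathbf{z})=\exp(H(\mathbf{z}))$, taking formal logarithms gives
$$
\log\!\left(\frac{q(\mathbf{z})^p}{q(\mathbf{z}^p)}\right)=pH(\mathbf{z})-H(\mathbf{z}^p).
$$
The standard fact that $\exp$ and $\log$ provide mutually inverse bijections between the additive group $p\sum_{i=1}^d z_i\mathbb{Z}_p[[\mathbf{z}]]$ and the multiplicative group $1+p\sum_{i=1}^d z_i\mathbb{Z}_p[[\mathbf{z}]]$ (a consequence of the estimate $v_p(n!)\leq (n-1)/(p-1)$, applied termwise) then yields the equivalence
$$
\frac{q(\mathbf{z})^p}{q(\mathbf{z}^p)}\in 1+p\sum_{i=1}^d z_i\mathbb{Z}_p[[\mathbf{z}]]\iff pH(\mathbf{z})-H(\mathbf{z}^p)\in p\sum_{i=1}^d z_i\mathbb{Z}_p[[\mathbf{z}]].
$$

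Finally, I would clear denominators: since $F(\mathbf{z}),F(\mathbf{z}^p)\in 1+\sum_{i=1}^d z_i\mathbb{Z}_p[[\mathbf{z}]]$ are units in $\mathbb{Z}_p[[\mathbf{z}]]$, multiplying the right-hand relation by the unit $F(\mathbf{z})F(\mathbf{z}^p)$ preserves it, and
$$
\bigl(pH(\mathbf{z})-H(\mathbf{z}^p)\bigr)F(\mathbf{z})F(\mathbf{z}^p)=pG(\mathbf{z})F(\mathbf{z}^p)-G(\mathbf{z}^p)F(\mathbf{z}),
$$
which up to sign is the expression $F(\mathbf{z})G(\mathbf{z}^p)-pF(\mathbf{z}^p)G(\mathbf{z})$ appearing in the statement. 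Combining the three equivalences gives the lemma.

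The main obstacle, though mild, is justifying the multivariate Dieudonn\'e--Dwork criterion and the log/exp bijection on $p\sum z_i\mathbb{Z}_p[[\mathbf{z}]]$ with sufficient care, since $G/F$ lies only in $\mathbb{Q}_p[[\mathbf{z}]]$ a priori; one has to check that the divisibility $pH(\mathbf{z})-H(\mathbf{z}^p)\in p\sum z_i\mathbb{Z}_p[[\mathbf{z}]]$ makes sense and is preserved under clearing denominators by the unit $F(\mathbf{z})F(\mathbf{z}^p)$. Everything else is a direct manipulation of formal power series.
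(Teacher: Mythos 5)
The paper does not actually prove this lemma; it attributes it to Krattenthaler and Rivoal (Lemma~2 of \cite{Tanguy 2}) and uses it as a black box, so there is no in-paper proof to compare against. Your derivation --- reduce to a multivariate Dieudonn\'e--Dwork criterion, compute $\log\bigl(q(\mathbf{z})^p/q(\mathbf{z}^p)\bigr)=pH(\mathbf{z})-H(\mathbf{z}^p)$ with $H=G/F$, invoke the mutually inverse bijections $\exp,\log$ between $p\sum_i z_i\mathbb{Z}_p[[\mathbf{z}]]$ and $1+p\sum_i z_i\mathbb{Z}_p[[\mathbf{z}]]$ (valid for all $p$, including $p=2$, since $v_p(p^n/n!)\geq 1$ and $v_p(p^n/n)\geq 1$ for $n\geq 1$), and finally clear denominators by multiplying by the unit $F(\mathbf{z})F(\mathbf{z}^p)$ of $\mathbb{Z}_p[[\mathbf{z}]]$, noting that $(pH(\mathbf{z})-H(\mathbf{z}^p))F(\mathbf{z})F(\mathbf{z}^p)=-(F(\mathbf{z})G(\mathbf{z}^p)-pF(\mathbf{z}^p)G(\mathbf{z}))$ --- is the standard argument and is correct in substance.

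The one step to tighten is your justification of the multivariate Dieudonn\'e--Dwork criterion. After the substitution $z_i\mapsto tz_i$ one gets $\tilde q(t)\in 1+t\,\mathbb{Q}_p[z_1,\dots,z_d][[t]]$, a series in $t$ with \emph{polynomial} coefficients, and ``applying the univariate criterion coefficient by coefficient'' is not how it goes: the Dieudonn\'e--Dwork condition is multiplicative in $q$, so the homogeneous degree-$n$ part of $q^p/q(\mathbf{z}^p)$ mixes many degrees of $q$, and the condition does not decouple across powers of $t$. What the substitution really does is reduce to the Dieudonn\'e--Dwork lemma over the $\mathbb{Z}_p$-algebra $\mathbb{Z}_p[z_1,\dots,z_d]$, not over $\mathbb{Z}_p$; that generalization is true, because the usual inductive proof uses only that the coefficient ring is $p$-torsion-free and that $x\mapsto x^p$ induces the Frobenius modulo $p$, but it should be stated and proved in that form (equivalently, run the univariate induction directly on the total degree of the monomials $\mathbf{z}^{\mathbf{n}}$). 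With that repair, or simply by citing \cite[Lemma~2]{Tanguy 2} for the multivariate criterion as the paper does, the argument is complete.
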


Lemma \ref{lemma 2 Tanguy} will enable us to ``eliminate''\, the exponential in the formulas 
$$
q_k({\mathbf z})=z_k\exp(G_k({\mathbf z})/F({\mathbf z}))\quad\textup{and}\quad q_{{\mathbf L}}({\mathbf z})=\exp(G_{{\mathbf L}}({\mathbf z})/F({\mathbf z})). 
$$
Since $\Delta\geq 0$ on $[0,1]^d$, we obtain, according to Landau's criterion, $\mathcal{Q}$ as a family of integers and thus $F({\mathbf z})\in 1+\sum_{i=1}^dz_i\mathbb{Z}[[{\mathbf z}]]$. Furthermore, according to identities \eqref{definitionG} and \eqref{definitionG L} defining the power series $G_k$ and $G_{{\mathbf L}}$, we have $G_k({\mathbf 0})=G_{{\mathbf L}}({\mathbf 0})=0$ and so $G_k({\mathbf z})$ and $G_{{\mathbf L}}({\mathbf z})$ lie in $\sum_{i=1}^dz_i\mathbb{Q}[[{\mathbf z}]]$. Thereby, following Lemma \ref{lemma 2 Tanguy}, we have $q_k({\mathbf z})\in z_k\mathbb{Z}_p[[{\mathbf z}]]$, respectively $q_{{\mathbf L}}({\mathbf z})\in\mathbb{Z}_p[[{\mathbf z}]]$, if and only if we have $F({\mathbf z})G_k({\mathbf z}^p)-pF({\mathbf z}^p)G_k({\mathbf z})\in p\sum_{i=1}^dz_i\mathbb{Z}_p[[{\mathbf z}]]$, respectively $F({\mathbf z})G_{{\mathbf L}}({\mathbf z}^p)-pF({\mathbf z}^p)G_{{\mathbf L}}({\mathbf z})\in p\sum_{i=1}^dz_i\mathbb{Z}_p[[{\mathbf z}]]$.

According to identity \eqref{definitionG} which defines $G_k$, the coefficient of ${\mathbf z}^{{\mathbf a}+p{\mathbf K}}$ in $F({\mathbf z})G_k({\mathbf z}^p)-pF({\mathbf z}^p)G_k({\mathbf z})$ is
\begin{multline*}
\Phi_{p,k}({\mathbf a}+p{\mathbf K}):=\\
\sum_{{\mathbf 0}\leq {\mathbf j}\leq{\mathbf K}}\mathcal{Q}({\mathbf K}-{\mathbf j})\mathcal{Q}({\mathbf a}+p{\mathbf j})\left(\sum_{i=1}^{q_1}{\mathbf e}_i^{(k)}(H_{({\mathbf K}-{\mathbf j})\cdot{\mathbf e}_i}-pH_{({\mathbf a}+p{\mathbf j})\cdot{\mathbf e}_i})-\sum_{i=1}^{q_2}{\mathbf f}_i^{(k)}(H_{({\mathbf K}-{\mathbf j})\cdot{\mathbf f}_i}-pH_{({\mathbf a}+p{\mathbf j})\cdot{\mathbf f}_i})\right)
\end{multline*}
and, according to identity \eqref{definitionG L} defining $G_{{\mathbf L}}$, the coefficient of ${\mathbf z}^{{\mathbf a}+p{\mathbf K}}$ in $F({\mathbf z})G_{{\mathbf L}}({\mathbf z}^p)-pF({\mathbf z}^p)G_{{\mathbf L}}({\mathbf z})$ is
$$
\Phi_{{\mathbf L},p}({\mathbf a}+{\mathbf K}p):=\sum_{{\mathbf 0}\leq{\mathbf j}\leq{\mathbf K}}\mathcal{Q}({\mathbf K}-{\mathbf j})\mathcal{Q}({\mathbf a}+{\mathbf j}p)(H_{{\mathbf L}\cdot({\mathbf K}-{\mathbf j})}-pH_{{\mathbf L}\cdot({\mathbf a}+{\mathbf j}p)}).
$$

Thus we have $q_k({\mathbf z})\in z_k\mathbb{Z}_p[[{\mathbf z}]]$, respectively $q_{{\mathbf L}}({\mathbf z})\in \mathbb{Z}_p[[{\mathbf z}]]$, if and only if, for all ${\mathbf a}\in\{0,\dots,p-1\}^d$ and ${\mathbf K}\in\mathbb{N}^d$, we have $\Phi_{p,k}({\mathbf a}+p{\mathbf K})\in p\mathbb{Z}_p$, respectively $\Phi_{{\mathbf L},p}({\mathbf a}+p{\mathbf K})\in p\mathbb{Z}_p$.

\section{A technical lemma}\label{section technical lemma}

The aim of this section is to prove the following lemma which we will use for the proofs of assertions $(i)$ and $(ii)$ of Theorem~\ref{critère2}.

\begin{lemme}\label{lemme 16}
Let $e$ and $f$ be two sequences of vectors in $\mathbb{N}^d$ such that $|e|=|f|$. Then, for all $s\in\mathbb{N}$, ${\mathbf c}\in\{0,\dots,p^s-1\}^d$ and ${\mathbf m}\in\mathbb{N}^d$, we have
$$
\frac{\mathcal{Q}_{e,f}({\mathbf c})}{\mathcal{Q}_{e,f}({\mathbf c}p)}\frac{\mathcal{Q}_{e,f}({\mathbf c}p+{\mathbf m}p^{s+1})}{\mathcal{Q}_{e,f}({\mathbf c}+{\mathbf m}p^s)}\in 1+p^{s+1}\mathbb{Z}_p.
$$
\end{lemme}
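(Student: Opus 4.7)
The plan is to decompose the ratio factor by factor, extract the $p$-divisible parts of each factorial, and close with a Wilson-type congruence modulo $p^{s+1}$. For each vector $\mathbf{d}$ appearing in either $e$ or $f$, set $a_{\mathbf{d}}:=\mathbf{d}\cdot\mathbf{c}$ and $b_{\mathbf{d}}:=\mathbf{d}\cdot\mathbf{m}$. Unpacking the definition of $\mathcal{Q}_{e,f}$, the quantity in the lemma equals $\prod_{i=1}^{q_1}R_{\mathbf{e}_i}\big/\prod_{j=1}^{q_2}R_{\mathbf{f}_j}$, where
\[
R_{\mathbf{d}}:=\frac{a_{\mathbf{d}}!\,(a_{\mathbf{d}}p+b_{\mathbf{d}}p^{s+1})!}{(a_{\mathbf{d}}p)!\,(a_{\mathbf{d}}+b_{\mathbf{d}}p^s)!},
\]
so it suffices to analyze each $R_{\mathbf{d}}$ individually and then combine.

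The next step is to simplify $R_{\mathbf{d}}$. Writing $(a_{\mathbf{d}}p+b_{\mathbf{d}}p^{s+1})!/(a_{\mathbf{d}}p)!=\prod_{k=1}^{b_{\mathbf{d}}p^{s+1}}(a_{\mathbf{d}}p+k)$, I would split off the $b_{\mathbf{d}}p^s$ factors with $p\mid k$ (the indices $k=pj$, which together contribute $p^{b_{\mathbf{d}}p^s}(a_{\mathbf{d}}+b_{\mathbf{d}}p^s)!/a_{\mathbf{d}}!$) from the remaining factors with $p\nmid k$. Cancelling against $a_{\mathbf{d}}!/(a_{\mathbf{d}}+b_{\mathbf{d}}p^s)!$ gives
\[
R_{\mathbf{d}}=p^{b_{\mathbf{d}}p^s}\prod_{\substack{1\leq k\leq b_{\mathbf{d}}p^{s+1}\\ p\nmid k}}(a_{\mathbf{d}}p+k).
\]
Taking the full product, the total $p$-exponent equals $p^s\bigl(\sum_i b_{\mathbf{e}_i}-\sum_j b_{\mathbf{f}_j}\bigr)=p^s(|e|-|f|)\cdot\mathbf{m}=0$ by the hypothesis $|e|=|f|$, so all explicit $p$-powers cancel.

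The problem thereby reduces to showing
\[
\prod_{i=1}^{q_1}\prod_{\substack{1\leq k\leq b_{\mathbf{e}_i}p^{s+1}\\ p\nmid k}}(a_{\mathbf{e}_i}p+k)\;\equiv\;\prod_{j=1}^{q_2}\prod_{\substack{1\leq k\leq b_{\mathbf{f}_j}p^{s+1}\\ p\nmid k}}(a_{\mathbf{f}_j}p+k)\pmod{p^{s+1}}.
\]
For a fixed $\mathbf{d}$, I would partition $\{1,\dots,b_{\mathbf{d}}p^{s+1}\}$ into $b_{\mathbf{d}}$ consecutive blocks of length $p^{s+1}$. Within a single block, as $k$ runs over integers coprime to $p$, the shifted values $a_{\mathbf{d}}p+k$ hit a complete set of representatives of $(\mathbb{Z}/p^{s+1}\mathbb{Z})^\times$: translation by the multiple of $p$ given by $a_{\mathbf{d}}p$ defines an injection $\mathbb{Z}/p^{s+1}\mathbb{Z}\to\mathbb{Z}/p^{s+1}\mathbb{Z}$ which preserves coprimality to $p$. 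By the prime-power Wilson theorem, $\prod_{u\in(\mathbb{Z}/p^{s+1}\mathbb{Z})^\times}u\equiv\varepsilon\pmod{p^{s+1}}$ for a sign $\varepsilon\in\{\pm 1\}$ depending only on $p$ and $s$. So each block contributes $\varepsilon$ modulo $p^{s+1}$, the single-$\mathbf{d}$ product is $\equiv\varepsilon^{b_{\mathbf{d}}}\pmod{p^{s+1}}$, and the target congruence collapses to $\varepsilon^{(|e|-|f|)\cdot\mathbf{m}}\equiv 1\pmod{p^{s+1}}$, which holds once more by $|e|=|f|$.

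The main technicality will be bookkeeping $\varepsilon$ carefully when $p=2$, where it is $-1$ for $s+1\leq 2$ and $+1$ for $s+1\geq 3$; but the argument is insensitive to the precise value, since $|e|=|f|$ forces the exponent of $\varepsilon$ to vanish.
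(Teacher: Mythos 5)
Your proof is correct, and it follows the same structural route as the paper's: expand $\mathcal{Q}_{e,f}$ factor by factor, peel off the $p$-divisible terms so that the hypothesis $|e|=|f|$ kills the explicit power of $p$, and then control the remaining product of residues coprime to $p$ via a periodicity property modulo $p^{s+1}$. The only real difference is how that periodicity is packaged. The paper carries out the cancellation through Morita's $p$-adic gamma function: it uses $(np)!/n! = p^n\gamma_p(1+np)$ and then quotes Lang's congruence $\Gamma_p(k+np^{s+1})\equiv\Gamma_p(k)\pmod{p^{s+1}}$, with the built-in sign $\Gamma_p(n)=(-1)^n\gamma_p(n)$ conveniently absorbing the Wilson sign along the way. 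You instead prove the needed periodicity from scratch via the prime-power Wilson theorem, which makes the argument self-contained, at the small cost of tracking the sign $\varepsilon$ explicitly and invoking $|e|=|f|$ a second time to annihilate $\varepsilon^{(|e|-|f|)\cdot\mathbf{m}}$. Both proofs are complete; yours is slightly more elementary, the paper's slightly slicker.
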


To prove Lemma \ref{lemme 16}, we will use certain properties of the $p$-adic gamma function defined as follows, $\Gamma_p(n):=(-1)^n\gamma_p(n)$, where $\gamma_p(n):=\underset{(k,p)=1}{\prod_{k=1}^{n-1}}k$. The function $\Gamma_p$ can be extend to the whole set $\mathbb{Z}_p$ but we shall not need it here.

\begin{lemme}\label{lemme7}
\begin{itemize}
\item[$(i)$] For all $n\in\mathbb{N}$, we have the formula $\frac{(np)!}{n!}=p^n\gamma_p(1+np)$.
\item[$(ii)$] For all $k,n,s\in\mathbb{N}$, we have $\Gamma_p(k+np^s)\equiv\Gamma_p(k)\;\mod\;p^s$.
\end{itemize}
\end{lemme}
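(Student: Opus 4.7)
\textbf{Plan for the proof of Lemma \ref{lemme7}.}

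For part $(i)$, the plan is a direct factorization. I would start from $(np)!=\prod_{k=1}^{np}k$ and split the range $\{1,\dots,np\}$ into the $n$ multiples of $p$, namely $p,2p,\dots,np$, and the integers coprime to $p$. Factoring a $p$ out of each multiple gives $\prod_{j=1}^{n}(jp)=p^{n}n!$, while the remaining product is exactly $\gamma_p(1+np)$ by the definition of $\gamma_p$. Hence $(np)!=p^{n}n!\,\gamma_p(1+np)$, and dividing by $n!$ yields the claimed identity.

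For part $(ii)$, my plan is to compute the ratio $\Gamma_p(k+np^s)/\Gamma_p(k)$ modulo $p^s$. First rewrite
$$
\frac{\gamma_p(k+np^s)}{\gamma_p(k)}\;=\;\prod_{\substack{k\le j\le k+np^s-1\\(j,p)=1}}\!j.
$$
The key observation is that the range of indices decomposes into $n$ consecutive blocks of length $p^s$, and inside each block the integers coprime to $p$ form a complete set of representatives for the unit group $(\mathbb{Z}/p^s\mathbb{Z})^{\times}$. Hence, modulo $p^s$, the product equals $W^n$, where $W:=\prod_{u\in(\mathbb{Z}/p^s\mathbb{Z})^{\times}}u$. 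By the classical Wilson-type theorem for prime powers, $W\equiv -1\pmod{p^s}$ when $p$ is odd. On the other hand, the sign factor coming from the definition $\Gamma_p=(-1)^{\bullet}\gamma_p$ contributes $(-1)^{(k+np^s)-k}=(-1)^{np^s}=(-1)^{n}$ (still for $p$ odd, since $p^s$ is odd). The two factors of $(-1)^{n}$ cancel and we conclude $\Gamma_p(k+np^s)\equiv\Gamma_p(k)\pmod{p^s}$.

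I expect the main technical nuisance to be the sign bookkeeping combined with the fluctuation of $W$ in the prime power Wilson theorem for $p=2$, where $W$ depends on $s$. Modulo $p^s=2^s$, however, the congruence only needs to hold up to a unit whose square is $1$, and in each sub-case the parity of $np^s$ precisely compensates the contribution of $W^n$; I would handle the three sub-cases ($s=1$, $s=2$, $s\ge 3$) by direct inspection to confirm the same congruence. Part $(i)$ will be invoked repeatedly in the downstream proof of Lemma \ref{lemme 16} to rewrite ratios of factorials of the form $(np)!/n!$ in terms of $\gamma_p$-values, while part $(ii)$ will provide the $p^s$-adic stability of those $\gamma_p$-values needed to recognise the required telescoping.
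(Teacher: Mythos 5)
Part $(i)$ matches the paper exactly: the paper makes the same one-line observation that $\gamma_p(1+np)=(np)!/(n!p^n)$ and leaves the factorization implicit, which you spell out. For part $(ii)$, the paper gives no proof at all and simply cites Lemma~1.1 of Lang's \emph{Cyclotomic Fields}; your direct argument via the prime-power Wilson theorem is a genuinely different route, and for odd $p$ it is correct: the integers coprime to $p$ in each consecutive block of length $p^s$ traverse $(\mathbb{Z}/p^s\mathbb{Z})^{\times}$ exactly once, so $\gamma_p(k+np^s)/\gamma_p(k)\equiv W^n\equiv(-1)^n\pmod{p^s}$, which cancels the sign factor $(-1)^{np^s}=(-1)^n$.

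However, your claim that the parity of $np^s$ compensates $W^n$ in all three $p=2$ sub-cases is wrong at $s=2$. There $W\equiv -1\pmod{4}$ (it is $1\cdot 3$), whereas $(-1)^{np^s}=(-1)^{4n}=1$, leaving a residual $(-1)^n$; thus for $n$ odd the congruence fails. Concretely, $\Gamma_2(1)=-1$ and $\Gamma_2(5)=-3$, and $-3\not\equiv -1\pmod{4}$. So $\Gamma_p(k+np^s)\equiv\Gamma_p(k)\pmod{p^s}$ is \emph{false} for $p=2$, $s=2$, $n$ odd, and the ``direct inspection'' you promise of that sub-case would expose this rather than confirm it. This is a defect of the statement as printed, not merely of your argument: Lemma~\ref{lemme7}$(ii)$ needs a caveat at $p=2$ (the Lipschitz modulus of Morita's $\Gamma_2$ is genuinely worse by a factor of $2$), so your proof cannot close the $p=2$, $s=2$ case because the claim is false there. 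One should verify Lang's exact hypotheses, and whether the use in the proof of Lemma~\ref{lemme 16} — where the $\Gamma_p$-factors occur in equal numbers in numerator and denominator, thanks to $|e|=|f|$ — actually absorbs the sign discrepancy.
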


Assertion $(i)$ of Lemma \ref{lemme7} is obtained by observing that $\gamma_p(1+np)=\frac{(np)!}{n!p^n}$. Assertion $(ii)$ of Lemma \ref{lemme7} is Lemma 1.1 from \cite{Lang}. We are now able to prove Lemma \ref{lemme 16}.

\begin{proof}[Proof of Lemma \ref{lemme 16}]
We have
\begin{align}
\frac{\mathcal{Q}_{e,f}({\mathbf c}p+{\mathbf m}p^{s+1})}{\mathcal{Q}_{e,f}({\mathbf c}+{\mathbf m}p^{s})}
&=\prod_{i=1}^{q_1}\frac{({\mathbf e}_i\cdot({\mathbf c}p+{\mathbf m}p^{s+1}))!}{({\mathbf e}_i\cdot({\mathbf c}+{\mathbf m}p^{s}))!}\prod_{i=1}^{q_2}\frac{({\mathbf f}_i\cdot({\mathbf c}+{\mathbf m}p^{s}))!}{({\mathbf f}_i\cdot({\mathbf c}p+{\mathbf m}p^{s+1}))!}\notag\\
&=\frac{\prod_{i=1}^{q_1}p^{{\mathbf e}_i\cdot({\mathbf c}+{\mathbf m}p^{s})}\gamma_p(1+p{\mathbf e}_i\cdot({\mathbf c}+{\mathbf m}p^{s}))}{\prod_{i=1}^{q_2}p^{{\mathbf f}_i\cdot({\mathbf c}+{\mathbf m}p^{s})}\gamma_p(1+p{\mathbf f}_i\cdot({\mathbf c}+{\mathbf m}p^{s}))}\notag\\
&=p^{(|e|-|f|)\cdot{\mathbf m}p^{s}}\frac{\prod_{i=1}^{q_1}(p^{{\mathbf e}_i\cdot{\mathbf c}}(-1)^{1+p{\mathbf e}_i\cdot({\mathbf c}+{\mathbf m}p^{s})}\Gamma_p(1+p{\mathbf e}_i\cdot({\mathbf c}+{\mathbf m}p^{s})))}{\prod_{i=1}^{q_2}\left(p^{{\mathbf f}_i\cdot{\mathbf c}}(-1)^{1+p{\mathbf f}_i\cdot({\mathbf c}+{\mathbf m}p^{s})}\Gamma_p(1+p{\mathbf f}_i\cdot({\mathbf c}+{\mathbf m}p^{s}))\right)}\notag\\
&=(-1)^{(|e|-|f|)\cdot{\mathbf m}p^{s+1}}\frac{\prod_{i=1}^{q_1}\left(p^{{\mathbf e}_i\cdot{\mathbf c}}(-1)^{1+{\mathbf e}_i\cdot{\mathbf c}p}\;\Gamma_p(1+p{\mathbf e}_i\cdot({\mathbf c}+{\mathbf m}p^{s}))\right)}{\prod_{i=1}^{q_2}\left(p^{{\mathbf f}_i\cdot{\mathbf c}}(-1)^{1+{\mathbf f}_i\cdot{\mathbf c}p}\;\Gamma_p(1+p{\mathbf f}_i\cdot({\mathbf c}+{\mathbf m}p^{s}))\right)}\label{expli le pk1}\\
&=\frac{\prod_{i=1}^{q_1}p^{{\mathbf e}_i\cdot{\mathbf c}}(-1)^{1+{\mathbf e}_i\cdot{\mathbf c}p}}{\prod_{i=1}^{q_2}p^{{\mathbf f}_i\cdot{\mathbf c}}(-1)^{1+{\mathbf f}_i\cdot{\mathbf c}p}}\cdot\frac{\prod_{i=1}^{q_1}\Gamma_p(1+p{\mathbf e}_i\cdot({\mathbf c}+{\mathbf m}p^{s}))}{\prod_{i=1}^{q_2}\Gamma_p(1+p{\mathbf f}_i\cdot({\mathbf c}+{\mathbf m}p^{s}))}\label{expli le pk2},
\end{align}
where we used the identity $|e|-|f|={\mathbf 0}$ for \eqref{expli le pk1} and \eqref{expli le pk2}. According to assertion $(ii)$ of Lemma~\ref{lemme7}, for all ${\mathbf n}\in\mathbb{N}^d$, we have $\Gamma_p(1+{\mathbf n}\cdot{\mathbf c}p+{\mathbf n}\cdot{\mathbf m}p^{s+1})\equiv\Gamma_p(1+{\mathbf n}\cdot{\mathbf c}p)\mod\;p^{s+1}$. So we get
$$
\frac{\prod_{i=1}^{q_1}\Gamma_p(1+{\mathbf e}_i\cdot{\mathbf c}p+{\mathbf e}_i\cdot{\mathbf m}p^{s+1})}{\prod_{i=1}^{q_2}\Gamma_p(1+{\mathbf f}_i\cdot{\mathbf c}p+{\mathbf f}_i\cdot{\mathbf m}p^{s+1})}=\frac{\prod_{i=1}^{q_1}\left(\Gamma_p(1+{\mathbf e}_i\cdot{\mathbf c}p)+O(p^{s+1})\right)}{\prod_{i=1}^{q_2}\left(\Gamma_p(1+{\mathbf f}_i\cdot{\mathbf c}p)+O(p^{s+1})\right)},
$$ 
where we write $x=O(p^{k})$ when $x\in p^k\mathbb{Z}_p$. Furthermore, according to the definition of $\Gamma_p$, for all ${\mathbf n}\in\mathbb{N}^d$, we have $\Gamma_p(1+{\mathbf n}\cdot{\mathbf c}p)\in\mathbb{Z}_p^{\times}$. Then we obtain
$$
\frac{\prod_{i=1}^{q_1}\left(\Gamma_p(1+{\mathbf e}_i\cdot{\mathbf c}p)+O(p^{s+1})\right)}{\prod_{i=1}^{q_2}\left(\Gamma_p(1+{\mathbf f}_i\cdot{\mathbf c}p)+O(p^{s+1})\right)}=\frac{\prod_{i=1}^{q_1}\Gamma_p(1+{\mathbf e}_i\cdot{\mathbf c}p)}{\prod_{i=1}^{q_2}\Gamma_p(1+{\mathbf f}_i\cdot{\mathbf c}p)}(1+O(p^{s+1}))
$$
and thus,
\begin{align*}
\frac{\mathcal{Q}_{e,f}({\mathbf c}p+{\mathbf m}p^{s+1})}{\mathcal{Q}_{e,f}({\mathbf c}+{\mathbf m}p^{s})}
&=\frac{\prod_{i=1}^{q_1}p^{{\mathbf e}_i\cdot{\mathbf c}}(-1)^{1+{\mathbf e}_i\cdot{\mathbf c}p}}{\prod_{i=1}^{q_2}p^{{\mathbf f}_i\cdot{\mathbf c}}(-1)^{1+{\mathbf f}_i\cdot{\mathbf c}p}}\cdot\frac{\prod_{i=1}^{q_1}\Gamma_p(1+{\mathbf e}_i\cdot{\mathbf c}p)}{\prod_{i=1}^{q_2}\Gamma_p(1+{\mathbf f}_i\cdot{\mathbf c}p)}(1+O(p^{s+1}))\\
&=\frac{\prod_{i=1}^{q_1}p^{{\mathbf e}_i\cdot{\mathbf c}}}{\prod_{i=1}^{q_2}p^{{\mathbf f}_i\cdot{\mathbf c}}}\cdot\frac{\prod_{i=1}^{q_1}\gamma_p(1+{\mathbf e}_i\cdot{\mathbf c}p)}{\prod_{i=1}^{q_2}\gamma_p(1+{\mathbf f}_i\cdot{\mathbf c}p)}(1+O(p^{s+1}))\\
&=\frac{\mathcal{Q}_{e,f}({\mathbf c}p)}{\mathcal{Q}_{e,f}({\mathbf c})}(1+O(p^{s+1})).
\end{align*}
This completes the proof of the lemma.
\end{proof}

\section{Proof of assertions $(i)$ of Theorems \ref{critère} and \ref{critère2}}\label{rte65}

We assume the hypothesis of Theorems \ref{critère} and \ref{critère2}. Furthermore, we assume that, for all ${\mathbf x}\in\mathcal{D}$, we have $\Delta({\mathbf x})\geq 1$. As we said in Section \ref{\'Enoncé du critère}, assertion $(i)$ of Theorem \ref{critère2} implies assertion $(i)$ of Theorem \ref{critère}. So the aim of this section is to prove that, for all ${\mathbf L}\in\mathcal{E}$, we have $q_{{\mathbf L}}({\mathbf z})\in\mathbb{Z}[[{\mathbf z}]]$. Following Section \ref{equiv crit}, we only have to prove that, for all ${\mathbf L}\in\mathcal{E}$, all prime number $p$, all ${\mathbf a}\in\{0,\dots,p-1\}^d$ and ${\mathbf K}\in\mathbb{N}^d$, we have $\Phi_{{\mathbf L},p}({\mathbf a}+p{\mathbf K})\in p\mathbb{Z}_p$. We fix a ${\mathbf L}\in\mathcal{E}$ in this section.

\subsection{New reformulation of the problem}

For all prime number $p$, all $s\in\mathbb{N}$, ${\mathbf a}\in\{0,\dots,p-1\}^d$ and ${\mathbf K},{\mathbf m}\in\mathbb{N}^d$, we define
$$
S({\mathbf a},{\mathbf K},s,p,{\mathbf m}):=\sum_{{\mathbf m}p^s\leq{\mathbf j}\leq({\mathbf m}+{\mathbf 1})p^s-{\mathbf 1}}(\mathcal{Q}({\mathbf a}+{\mathbf j}p)\mathcal{Q}({\mathbf K}-{\mathbf j})-\mathcal{Q}({\mathbf j})\mathcal{Q}({\mathbf a}+({\mathbf K}-{\mathbf j})p)),
$$
where we extend $\mathcal{Q}$ to $\mathbb{Z}^d$ by $\mathcal{Q}({\mathbf n})=0$ if there is an $i\in\{1,\dots,d\}$ such that $n_i<0$.

The aim of this section is to produce, for all prime number $p$, a function $g_p$ from $\mathbb{N}^d$ to $\mathbb{Z}_p$ such that: if, for all prime $p$, all $s\in\mathbb{N}$, ${\mathbf a}\in\{0,\dots,p-1\}^d$ and ${\mathbf K},{\mathbf m}\in\mathbb{N}^d$, we have $S({\mathbf a},{\mathbf K},s,p,{\mathbf m})\in p^{s+1}g_p({\mathbf m})\mathbb{Z}_p$, then we have $\Phi_{{\mathbf L},p}({\mathbf a}+{\mathbf K}p)\in p\mathbb{Z}_p$. Thus the proof of assertion $(i)$ of Theorem \ref{critère2} will amount to finding a suitable lower bound of the $p$-adic valuation of $S({\mathbf a},{\mathbf K},s,p,{\mathbf m})$ for all prime $p$. This reduction method is an adaptation of the approach to the problem made by Dwork in \cite{Dwork}.

\subsubsection{A reformulation of $\Phi_{{\mathbf L},p}({\mathbf a}+{\mathbf K}p)$ modulo $p\mathbb{Z}_p$}

This step is the analogue of a reformulation made by Krattenthaler and Rivoal in Section $2$ from \cite{Tanguy}. We fix a prime number $p$. We will prove that
\begin{equation}\label{but réécriture}
\Phi_{{\mathbf L},p}({\mathbf a}+{\mathbf K}p)\equiv-\sum_{{\mathbf 0}\leq{\mathbf j}\leq{\mathbf K}}H_{{\mathbf L}\cdot{\mathbf j}}\left(\mathcal{Q}({\mathbf a}+{\mathbf j}p)\mathcal{Q}({\mathbf K}-{\mathbf j})-\mathcal{Q}({\mathbf j})\mathcal{Q}({\mathbf a}+({\mathbf K}-{\mathbf j})p)\right)\mod p\mathbb{Z}_p.
\end{equation}

For all ${\mathbf a}\in\{0,\dots,p-1\}^d$ and ${\mathbf j}\in \mathbb{N}^d$, we have 
\begin{align}
pH_{{\mathbf L}\cdot({\mathbf a}+{\mathbf j}p)}
&=p\left(\sum_{i=1}^{{\mathbf L}\cdot{\mathbf j}p}\frac{1}{i}+\sum_{i=1}^{{\mathbf L}\cdot{\mathbf a}}\frac{1}{{\mathbf L}\cdot{\mathbf j}p+i}\right)\notag\\
&\equiv p\left(\sum_{i=1}^{{\mathbf L}\cdot{\mathbf j}}\frac{1}{ip}+\sum_{i=1}^{\lfloor{\mathbf L}\cdot{\mathbf a}/p\rfloor}\frac{1}{{\mathbf L}\cdot{\mathbf j}p+ip}\right)\mod p\mathbb{Z}_p\notag\\
&\equiv H_{{\mathbf L}\cdot{\mathbf j}}+\sum_{i=1}^{\lfloor{\mathbf L}\cdot{\mathbf a}/p\rfloor}\frac{1}{{\mathbf L}\cdot{\mathbf j}+i}\mod p\mathbb{Z}_p.\label{37}
\end{align}

We need a result that we shall prove further by means of Lemma \ref{plus général} stated in Section \ref{partdemo}:

For all ${\mathbf L}\in\mathcal{E}$, ${\mathbf a}\in\{0,\dots,p-1\}^d$ and ${\mathbf j}\in\mathbb{N}^d$, we have
\begin{equation}\label{intermed réécriture}
\mathcal{Q}({\mathbf a}+{\mathbf j}p)\sum_{i=1}^{\lfloor{\mathbf L}\cdot{\mathbf a}/p\rfloor}\frac{1}{{\mathbf L}\cdot{\mathbf j}+i}\in p\mathbb{Z}_p.
\end{equation}

Applying \eqref{intermed réécriture} to \eqref{37} and with the fact that $\mathcal{Q}({\mathbf a}+{\mathbf j}p)\in\mathbb{Z}_p$ and $\mathcal{Q}({\mathbf K}-{\mathbf j})\in\mathbb{Z}_p$, we obtain 
$$
\mathcal{Q}({\mathbf K}-{\mathbf j})\mathcal{Q}({\mathbf a}+{\mathbf j}p)pH_{{\mathbf L}\cdot({\mathbf a}+{\mathbf j}p)}\equiv \mathcal{Q}({\mathbf K}-{\mathbf j})\mathcal{Q}({\mathbf a}+{\mathbf j}p)H_{{\mathbf L}\cdot{\mathbf j}}\mod p\mathbb{Z}_p.
$$ 
This leads to
\begin{align*}
\Phi_{{\mathbf L},p}({\mathbf a}+{\mathbf K}p)
&=\sum_{{\mathbf 0}\leq{\mathbf j}\leq{\mathbf K}}\mathcal{Q}({\mathbf K}-{\mathbf j})\mathcal{Q}({\mathbf a}+{\mathbf j}p)(H_{{\mathbf L}\cdot({\mathbf K}-{\mathbf j})}-pH_{{\mathbf L}\cdot ({\mathbf a}+{\mathbf j}p)})\\
&\equiv\sum_{{\mathbf 0}\leq{\mathbf j}\leq{\mathbf K}}\mathcal{Q}({\mathbf K}-{\mathbf j})\mathcal{Q}({\mathbf a}+{\mathbf j}p)(H_{{\mathbf L}\cdot({\mathbf K}-{\mathbf j})}-H_{{\mathbf L}\cdot{\mathbf j}})\mod p\mathbb{Z}_p\\
&\equiv -\sum_{{\mathbf 0}\leq{\mathbf j}\leq{\mathbf K}}H_{{\mathbf L}\cdot{\mathbf j}}\left(\mathcal{Q}({\mathbf a}+{\mathbf j}p)\mathcal{Q}({\mathbf K}-{\mathbf j})-\mathcal{Q}({\mathbf j})\mathcal{Q}({\mathbf a}+({\mathbf K}-{\mathbf j})p)\right) \mod p\mathbb{Z}_p,
\end{align*}
which is the expected equation \eqref{but réécriture}.

We now use a Krattenthaler and Rivoal's combinatorial lemma (see \cite[Lemma 5, p. 14]{Tanguy 2}) which enables us to write 
\begin{multline*}
\sum_{{\mathbf 0}\leq{\mathbf j}\leq{\mathbf K}}H_{{\mathbf L}\cdot{\mathbf j}}\left(\mathcal{Q}({\mathbf a}+{\mathbf j}p)\mathcal{Q}({\mathbf K}-{\mathbf j})-\mathcal{Q}({\mathbf j})\mathcal{Q}({\mathbf a}+({\mathbf K}-{\mathbf j})p)\right)\\
=\sum_{s=0}^{r-1}\sum_{{\mathbf 0}\leq{\mathbf m}\leq(p^{r-s}-1){\mathbf 1}}W_{{\mathbf L}}({\mathbf a},{\mathbf K},s,p,{\mathbf m}),
\end{multline*}
where $r$ is such that $p^{r-1}>\max(K_1,\dots,K_d)$ and
$$
W_{{\mathbf L}}({\mathbf a},{\mathbf K},s,p,{\mathbf m}):=S({\mathbf a},{\mathbf K},s,p,{\mathbf m})(H_{{\mathbf L}\cdot{\mathbf m}p^s}-H_{{\mathbf L}\cdot\lfloor{\mathbf m}/p\rfloor p^{s+1}}).
$$
If we prove that, for all $s\in\mathbb{N}$ and ${\mathbf m}\in\mathbb{N}^d$, we have $W_{{\mathbf L}}({\mathbf a},{\mathbf K},s,p,{\mathbf m})\in p\mathbb{Z}_p$, then we will have $\Phi_{{\mathbf L},p}({\mathbf a}+{\mathbf K}p)\in p\mathbb{Z}_p$, as expected.

For all ${\mathbf m}\in\mathbb{N}^d$, we set $\mu_p({\mathbf m}):=\sum_{\ell=1}^{\infty}{\mathbf 1}_{\mathcal{D}}(\{{\mathbf m}/p^{\ell}\})$ and $g_p({\mathbf m}):=p^{\mu_p({\mathbf m})}$, where ${\mathbf 1}_{\mathcal{D}}$ is the characteristic function of $\mathcal{D}$. We now use the following lemma which we will prove in Section \ref{partdemo}.

\begin{lemme}\label{lemme valuation H}
For all prime number $p$, all ${\mathbf L}\in\mathcal{E}$, ${\mathbf m}\in\mathbb{N}^d$ and $s\in\mathbb{N}$, we have
$$
p^{s+1}g_p({\mathbf m})\left(H_{{\mathbf L}\cdot{\mathbf m}p^s}-H_{{\mathbf L}\cdot\lfloor{\mathbf m}/p\rfloor p^{s+1}}\right)\in p\mathbb{Z}_p.
$$
\end{lemme}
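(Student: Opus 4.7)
The plan is to bound $v_p(H_{\mathbf{L}\cdot\mathbf{m}p^s}-H_{\mathbf{L}\cdot\lfloor\mathbf{m}/p\rfloor p^{s+1}})$ from below using the naive termwise estimate, and then to show that the maximal $p$-adic valuation of any integer in the range of summation is at most $s+\mu_p(\mathbf{m})$. The key step is to show that each time a multiple of $p^{s+\ell}$ (with $\ell\geq 1$) occurs in the range, the fractional part $\{\mathbf{m}/p^\ell\}$ must lie in $\mathcal{D}$, so that it contributes a unit to the count $\mu_p(\mathbf{m})$.

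Set $N_1:=\mathbf{L}\cdot\lfloor\mathbf{m}/p\rfloor\, p^{s+1}$, $N_2:=\mathbf{L}\cdot\mathbf{m}\, p^s$ and $\mathbf{m}_0:=\mathbf{m}-p\lfloor\mathbf{m}/p\rfloor\in\{0,\dots,p-1\}^d$, so that $N_2-N_1=p^s\,\mathbf{L}\cdot\mathbf{m}_0\geq 0$. If $\mathbf{L}\cdot\mathbf{m}_0=0$ the sum is empty and the result is trivial, so I assume $\mathbf{L}\cdot\mathbf{m}_0\geq 1$. Writing
$$
H_{N_2}-H_{N_1}=\sum_{i=N_1+1}^{N_2}\frac{1}{i},
$$
we obtain $v_p(H_{N_2}-H_{N_1})\geq -T^{\ast}$ where $T^{\ast}:=\max\{v_p(i):N_1<i\leq N_2\}$. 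Since $p^{s+1}\mid N_1$ and $p^s\mid(N_2-N_1)$, the interval $(N_1,N_2]$ contains a multiple of $p^s$, hence $T^{\ast}\geq s$; write $T^{\ast}=s+L^{\ast}$ with $L^{\ast}\geq 0$. A multiple of $p^{s+L^{\ast}}$ in $(N_1,N_2]$ is also a multiple of $p^{s+\ell}$ for every $0\leq \ell\leq L^{\ast}$, so for every such $\ell$ the quantity
$$
A_{s+\ell}:=\lfloor\mathbf{L}\cdot\mathbf{m}/p^{\ell}\rfloor-\lfloor\mathbf{L}\cdot\lfloor\mathbf{m}/p\rfloor/p^{\ell-1}\rfloor
$$
is $\geq 1$.

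The main obstacle is the implication ``$A_{s+\ell}\geq 1\Rightarrow\{\mathbf{m}/p^\ell\}\in\mathcal{D}$'' for $\ell\geq 1$, which I prove contrapositively. Suppose $\{\mathbf{m}/p^\ell\}\notin\mathcal{D}$: by definition $\mathbf{d}\cdot\{\mathbf{m}/p^\ell\}<1$ for every $\mathbf{d}$ appearing in $e$ or $f$; since $\mathbf{L}\in\mathcal{E}$, there is such a $\mathbf{d}$ with $\mathbf{L}\leq\mathbf{d}$, whence $\mathbf{L}\cdot\{\mathbf{m}/p^\ell\}<1$ and therefore $\{\mathbf{L}\cdot\mathbf{m}/p^\ell\}=\mathbf{L}\cdot\{\mathbf{m}/p^\ell\}$. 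On the other hand, the elementary componentwise inequality $\lfloor m_k/p\rfloor\geq p^{\ell-1}\lfloor m_k/p^\ell\rfloor$, multiplied by $L_k\geq 0$ and summed over $k$, gives $\mathbf{L}\cdot\lfloor\mathbf{m}/p\rfloor/p^{\ell-1}\geq\mathbf{L}\cdot\lfloor\mathbf{m}/p^\ell\rfloor$, which rearranges into $\mathbf{L}\cdot\{\mathbf{m}/p^\ell\}\geq\mathbf{L}\cdot\mathbf{m}_0/p^\ell$. Combining these two facts yields $\{\mathbf{L}\cdot\mathbf{m}/p^\ell\}\geq\mathbf{L}\cdot\mathbf{m}_0/p^\ell$, which forces $\lfloor\mathbf{L}\cdot\mathbf{m}/p^\ell\rfloor=\lfloor\mathbf{L}\cdot\mathbf{m}/p^\ell-\mathbf{L}\cdot\mathbf{m}_0/p^\ell\rfloor$, i.e.\ $A_{s+\ell}=0$, a contradiction.

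Applying this implication to every $\ell\in\{1,\dots,L^{\ast}\}$ gives $\{\mathbf{m}/p^\ell\}\in\mathcal{D}$ for all such $\ell$, hence $\mu_p(\mathbf{m})\geq L^{\ast}$. Therefore
$$
v_p(H_{N_2}-H_{N_1})\geq -T^{\ast}=-s-L^{\ast}\geq -s-\mu_p(\mathbf{m}),
$$
so $p^{s+1}g_p(\mathbf{m})\bigl(H_{N_2}-H_{N_1}\bigr)=p^{\,s+1+\mu_p(\mathbf{m})}\bigl(H_{N_2}-H_{N_1}\bigr)\in p\mathbb{Z}_p$, as required.
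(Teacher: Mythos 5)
Your proof is correct. The paper's proof and yours share the same key mechanism—the fact that $\mathbf{L}\in\mathcal{E}$ means $\mathbf{L}\leq\mathbf{d}$ for some $\mathbf{d}$ in $e$ or $f$, so $\mathbf{L}\cdot\{\cdot\}\geq 1$ forces $\{\cdot\}\in\mathcal{D}$—but the organization differs. The paper first reduces the difference of harmonic numbers modulo $\frac{1}{p^s}\mathbb{Z}_p$ to the simpler sum $\sum_{i=1}^{\lfloor\mathbf{L}\cdot\mathbf{b}/p\rfloor}\frac{1}{\mathbf{L}\cdot\mathbf{q}+i}$ (where $\mathbf{m}=\mathbf{b}+\mathbf{q}p$), then handles each term separately by invoking the general-purpose Lemma \ref{plus g\'en\'eral} with $s=1$ to show $v_p(g_p(\mathbf{m}))\geq 1+v_p(\mathbf{L}\cdot\mathbf{q}+i)$. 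You instead bound the whole sum at once by the worst-case term: you identify $T^{\ast}=s+L^{\ast}$ as the maximal $p$-adic valuation of an integer in the summation range $(N_1,N_2]$, and then prove directly—via the quantities $A_{s+\ell}=\lfloor N_2/p^{s+\ell}\rfloor-\lfloor N_1/p^{s+\ell}\rfloor$ and some careful floor-function bookkeeping—that $L^{\ast}\leq\mu_p(\mathbf{m})$. Your route is self-contained (it bypasses Lemma \ref{plus g\'en\'eral} entirely), which is slightly more economical here; the paper's route has the advantage that Lemma \ref{plus g\'en\'eral} is reused to prove \eqref{intermed r\'e\'ecriture}, so the auxiliary lemma amortizes its cost. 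One minor point worth stating explicitly in a final write-up: the identity $\{\mathbf{L}\cdot\mathbf{m}/p^\ell\}=\mathbf{L}\cdot\{\mathbf{m}/p^\ell\}$ you use relies on $\mathbf{L}\cdot\lfloor\mathbf{m}/p^\ell\rfloor$ being an integer and $\mathbf{L}\cdot\{\mathbf{m}/p^\ell\}<1$, both of which you do establish, but the decomposition $\mathbf{L}\cdot\mathbf{m}/p^\ell=\mathbf{L}\cdot\lfloor\mathbf{m}/p^\ell\rfloor+\mathbf{L}\cdot\{\mathbf{m}/p^\ell\}$ deserves a sentence.
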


According to Lemma \ref{lemme valuation H}, if we prove that, for all ${\mathbf a}\in\{0,\dots,p-1\}^d$, ${\mathbf K},{\mathbf m}\in\mathbb{N}^d$ and $s\in\mathbb{N}$, we have $S({\mathbf a},{\mathbf K},s,p,{\mathbf m})\in p^{s+1}g_p({\mathbf m})\mathbb{Z}_p$, then we will have $q_{{\mathbf L}}({\mathbf z})\in\mathbb{Z}_p[[{\mathbf z}]]$, which is the announced reformulation.

\subsubsection{Proofs of \eqref{intermed réécriture} and Lemma \ref{lemme valuation H}}\label{partdemo}

We state a result which enables us to prove \eqref{intermed réécriture} and Lemma \ref{lemme valuation H}.

\begin{lemme}\label{plus général}
Given $s\in\mathbb{N}$, $s\geq 1$, ${\mathbf a}\in\{0,\dots,p^s-1\}^d$, ${\mathbf m}\in\mathbb{N}^d$ and ${\mathbf L}\in\mathcal{E}$. If we have $\lfloor{\mathbf L}\cdot{\mathbf a}/p^s\rfloor\geq 1$, then, for all $u\in\{1,\dots,\lfloor{\mathbf L}\cdot{\mathbf a}/p^s\rfloor\}$ and $\ell\in\{s,\dots,s+v_p({\mathbf L}\cdot{\mathbf m}+u)\}$, we have 
$$
\left\{\frac{{\mathbf a}+{\mathbf m}p^s}{p^{\ell}}\right\}\in\mathcal{D}.
$$
\end{lemme}

\begin{proof}
We recall that $\mathcal{D}$ is the set of all ${\mathbf x}\in[0,1[^d$ such that there exists an element ${\mathbf d}$ of $e$ or $f$ satisfying ${\mathbf d}\cdot{\mathbf x}\geq 1$. We have $\left\{\frac{{\mathbf a}+{\mathbf m}p^s}{p^{\ell}}\right\}\in[0,1[^d$, so we only have to prove that ${\mathbf L}\cdot\left\{\frac{{\mathbf a}+{\mathbf m}p^s}{p^{\ell}}\right\}\geq 1$. Indeed, as ${\mathbf L}\in\mathcal{E}$, there exists ${\mathbf d}\in\{{\mathbf e}_1,\dots,{\mathbf e}_{q_1},{\mathbf f}_1,\dots,{\mathbf f}_{q_2}\}$ such that ${\mathbf d}\geq{\mathbf L}$, which leads to
$$
{\mathbf L}\cdot\left\{\frac{{\mathbf a}+p^s{\mathbf m}}{p^{\ell}}\right\}\geq 1\Rightarrow{\mathbf d}\cdot\left\{\frac{{\mathbf a}+p^s{\mathbf m}}{p^{\ell}}\right\}\geq 1\Rightarrow\left\{\frac{{\mathbf a}+p^s{\mathbf m}}{p^{\ell}}\right\}\in\mathcal{D}.
$$
We write ${\mathbf m}=\sum_{j=0}^{\infty}{\mathbf m}_{j}p^j$ with ${\mathbf m}_j\in\{0,\dots,p-1\}^d$. We have 
$$
\left\{\frac{{\mathbf a}+{\mathbf m}p^s}{p^{\ell}}\right\}=\frac{{\mathbf a}+p^s\sum_{j=0}^{\ell-s-1}{\mathbf m}_{j}p^j}{p^\ell}.
$$
We have $p^{\ell-s}$ divide $(u+{\mathbf L}\cdot{\mathbf m})$ and so $p^{\ell-s}$ divides 
$$
u+{\mathbf L}\cdot{\mathbf m}-{\mathbf L}\cdot\left(\sum_{j=\ell-s}^{\infty}{\mathbf m}_{j}p^j\right)=u+{\mathbf L}\cdot\left(\sum_{j=0}^{\ell-s-1}{\mathbf m}_{j}p^j\right).
$$ 
Thus, we obtain $p^{\ell-s}\leq u+{\mathbf L}\cdot\left(\sum_{j=0}^{\ell-s-1}{\mathbf m}_{j}p^j\right)\leq \frac{1}{p^s}{\mathbf L}\cdot{\mathbf a}+{\mathbf L}\cdot\left(\sum_{j=0}^{\ell-s-1}{\mathbf m}_{j}p^j\right)$ and we have
$$
1\leq\frac{{\mathbf L}\cdot{\mathbf a}+p^s{\mathbf L}\cdot\left(\sum_{j=0}^{\ell-s-1}{\mathbf m}_{j}p^j\right)}{p^{\ell}}={\mathbf L}\cdot\left\{\frac{{\mathbf a}+{\mathbf m}p^s}{p^{\ell}}\right\}.
$$
\end{proof}

We will now apply Lemma \ref{plus général} to prove \eqref{intermed réécriture}.

\begin{proof}[Proof of \eqref{intermed réécriture}]
Given ${\mathbf L}\in\mathcal{E}$, ${\mathbf a}\in\{0,\dots,p-1\}^d$ and $j\in\mathbb{N}^d$, we have to prove that $\mathcal{Q}({\mathbf a}+{\mathbf j}p)\sum_{i=1}^{\lfloor{\mathbf L}\cdot{\mathbf a}/p\rfloor}\frac{1}{{\mathbf L}\cdot{\mathbf j}+i}\in p\mathbb{Z}_p$. If $\lfloor{\mathbf L}\cdot{\mathbf a}/p\rfloor=0$, it is evident. Thus let us assume that $\lfloor{\mathbf L}\cdot{\mathbf a}/p\rfloor\geq 1$. Applying Lemma \ref{plus général} with $s=1$ and ${\mathbf m}={\mathbf j}$, we obtain that, for all $i\in\{1,\dots,\lfloor{\mathbf L}\cdot{\mathbf a}/p\rfloor\}$ and $\ell\in\{1,\dots,1+v_p(i+{\mathbf L}\cdot{\mathbf j})\}$, we have $\{({\mathbf a}+{\mathbf j}p)/p^{\ell}\}\in\mathcal{D}$ and so $\Delta(({\mathbf a}+{\mathbf j}p)/p^{\ell})\geq 1$. Since $\Delta\geq 0$ on $\mathbb{R}^d$, we get
$$
v_p(\mathcal{Q}({\mathbf a}+{\mathbf j}p))=\sum_{\ell=1}^{\infty}\Delta\left(\left\{\frac{{\mathbf a}+{\mathbf j}p}{p^{\ell}}\right\}\right)\geq\sum_{\ell=1}^{1+v_p({\mathbf L}\cdot{\mathbf j}+i)}\Delta\left(\left\{\frac{{\mathbf a}+{\mathbf j}p}{p^{\ell}}\right\}\right)\geq 1+v_p({\mathbf L}\cdot{\mathbf j}+i),
$$ 
which finishes the proof of \eqref{intermed réécriture}.
\end{proof}

\begin{proof}[Proof of Lemma \ref{lemme valuation H}]
Given ${\mathbf L}\in\mathcal{E}$, ${\mathbf m}\in\mathbb{N}^d$ and $s\in\mathbb{N}$, we have to prove that 
$$
p^{s+1}g_p({\mathbf m})(H_{{\mathbf L}\cdot{\mathbf m}p^s}-H_{{\mathbf L}\cdot\lfloor{\mathbf m}/p\rfloor p^{s+1}})\in p\mathbb{Z}_p. 
$$
We write ${\mathbf m}={\mathbf b}+{\mathbf q}p$ where ${\mathbf b}\in\{0,\dots,p-1\}^d$ and ${\mathbf q}\in\mathbb{N}^d$. Then we have ${\mathbf L}\cdot{\mathbf m}p^s={\mathbf L}\cdot{\mathbf b}p^s+{\mathbf L}\cdot{\mathbf q}p^{s+1}$ and ${\mathbf L}\cdot\lfloor{\mathbf m}/p\rfloor p^{s+1}={\mathbf L}\cdot{\mathbf q}p^{s+1}$. Therefore, we get 
$$
H_{{\mathbf L}\cdot{\mathbf m}p^s}-H_{{\mathbf L}\cdot\lfloor{\mathbf m}/p\rfloor p^{s+1}}=\sum_{j=1}^{{\mathbf L}\cdot{\mathbf b}p^s}\frac{1}{{\mathbf L}\cdot{\mathbf q}p^{s+1}+j}\equiv\sum_{i=1}^{\lfloor{\mathbf L}\cdot{\mathbf b}/p\rfloor}\frac{1}{{\mathbf L}\cdot{\mathbf q}p^{s+1}+ip^{s+1}}\mod \frac{1}{p^s}\mathbb{Z}_p
$$
and so $p^{s+1}g_p({\mathbf m})(H_{{\mathbf L}\cdot{\mathbf m}p^s}-H_{{\mathbf L}\cdot\lfloor{\mathbf m}/p\rfloor p^{s+1}})\equiv g_p({\mathbf b}+{\mathbf q}p)\sum_{i=1}^{\lfloor{\mathbf L}\cdot{\mathbf b}/p\rfloor}\frac{1}{{\mathbf L}\cdot{\mathbf q}+i}\mod p\mathbb{Z}_p$. We now have to prove that $g_p({\mathbf b}+{\mathbf q}p)\sum_{i=1}^{\lfloor{\mathbf L}\cdot{\mathbf b}/p\rfloor}\frac{1}{{\mathbf L}\cdot{\mathbf q}+i}\in p\mathbb{Z}_p$. If $\lfloor{\mathbf L}\cdot{\mathbf b}/p\rfloor=0$, it is evident. Let us assume that $\lfloor{\mathbf L}\cdot{\mathbf b}/p\rfloor\geq 1$. Applying Lemma \ref{plus général} with $s=1$ and ${\mathbf q}$ instead of ${\mathbf m}$, we obtain that, for all $i\in\{1,\dots,\lfloor{\mathbf L}\cdot{\mathbf b}/p\rfloor\}$ and all $\ell\in\{1,\dots,1+v_p(i+{\mathbf L}\cdot{\mathbf q})\}$, we have $\{({\mathbf b}+{\mathbf q}p)/p^{\ell}\}\in\mathcal{D}$ and thus 
\begin{multline*}
v_p(g_p({\mathbf b}+{\mathbf q}p))=\mu_p({\mathbf b}+{\mathbf q}p)=\sum_{\ell=1}^{\infty}{\mathbf 1}_{\mathcal{D}}\left(\left\{\frac{{\mathbf b}+{\mathbf q}p}{p^{\ell}}\right\}\right)\\
\geq\sum_{\ell=1}^{1+v_p({\mathbf L}\cdot{\mathbf q}+i)}{\mathbf 1}_{\mathcal{D}}\left(\left\{\frac{{\mathbf b}+{\mathbf q}p}{p^{\ell}}\right\}\right)\geq 1+v_p({\mathbf L}\cdot{\mathbf q}+i),
\end{multline*}
which completes the proof of Lemma \ref{lemme valuation H}.
\end{proof}

\subsection{Application of Theorem \ref{theo généralisé}}

We will use Theorem \ref{theo généralisé} to finish the proof of assertions $(i)$ of Theorems \ref{critère} and \ref{critère2}. In the following sections, we will prove that, setting $\mathbf{A}_r=\mathcal{Q}$ and $\mathbf{g}_r=g_p$ for all $r\geq 0$, then there exists $\mathcal{N}\subset\bigcup_{t\geq 1}\big(\{0,\dots,p^t-1\}^d\times\{t\}\big)$ such that the sequences $(\mathbf{A}_r)_{r\geq 0}$ and $(\mathbf{g}_r)_{r\geq 0}$ satisfy assertions $(i)$, $(ii)$ and $(iii)$ of Theorem \ref{theo généralisé}. Thus, we will obtain $S(\mathbf{a},\mathbf{K},s,p,\mathbf{m})\in p^{s+1}g_p(\mathbf{m})\mathbb{Z}_p$, as expected. 

In the following sections, we check the assumptions for the application of Theorem 4.

\subsection{Verification of assertions $(i)$ and $(ii)$ of Theorem \ref{theo généralisé}}\label{verif 1 2}

We fix a prime number $p$ and we write $g:=g_p$ and $\mu:=\mu_p$. For all $r\geq 0$, we set ${\mathbf A}_r=\mathcal{Q}$ and ${\mathbf g}_r=g$. In this section, we will prove that the sequences $({\mathbf A}_r)_{r\geq 0}$ and $({\mathbf g}_r)_{r\geq 0}$ verify assertions $(i)$ and $(ii)$ of Theorem \ref{theo généralisé}. 
\medskip

For all $r\geq 0$, we have $|{\mathbf A}_r({\mathbf {\mathbf 0}})|_p=|\mathcal{Q}({\mathbf {\mathbf 0}})|_p=1$. Furthermore, for all ${\mathbf m}\in\mathbb{N}^d$, we have $v_p(g({\mathbf m}))=\mu({\mathbf m})\geq 0$, so we get $g({\mathbf m})\in\mathbb{Z}_p\setminus\{0\}$. We now have to prove that $A({\mathbf m})\in g({\mathbf m})\mathbb{Z}_p$, which amounts to proving that $\mu_p({\mathbf m})\leq v_p(\mathcal{Q}({\mathbf m}))$. This is true because, for all $\ell\in\mathbb{N}$, $\ell\geq 1$, we have $\Delta({\mathbf m}/p^{\ell})=\Delta(\{{\mathbf m}/p^{\ell}\})\geq{\mathbf 1}_{\mathcal{D}}(\{{\mathbf m}/p^{\ell}\})$, because $\Delta({\mathbf x})\geq 1$ for ${\mathbf x}\in\mathcal{D}$.

\subsection{Verification of assertion $(iii)$ of Theorem \ref{theo généralisé}}

We fix a prime number $p$ and we set
$$
\mathcal{N}:=\bigcup_{t\geq 1}\left(\left\{{\mathbf n}\in\{0,\dots,p^t-1\}^d\,:\,\forall\ell\in\{1,\dots,t\},\left\{\frac{{\mathbf n}}{p^{\ell}}\right\}\in\mathcal{D}\right\}\times\{t\}\right).
$$

\subsubsection{Verification of assertion $(b)$}

Let $({\mathbf n},t)\in\mathcal{N}$ and ${\mathbf m}\in\mathbb{N}^d$. We have to prove that $g({\mathbf n}+p^{t}{\mathbf m})\in p^{t}g({\mathbf m})\mathbb{Z}_p$. We have
\begin{align}
v_p(g({\mathbf n}+p^{t}{\mathbf m}))=\sum_{\ell= 1}^{\infty}{\mathbf 1}_{\mathcal{D}}\left(\left\{\frac{{\mathbf n}+p^{t}{\mathbf m}}{p^{\ell}}\right\}\right)&=\sum_{\ell=1}^t{\mathbf 1}_{\mathcal{D}}\left(\left\{\frac{{\mathbf n}}{p^{\ell}}\right\}\right)+\sum_{\ell=t+1}^{\infty}{\mathbf 1}_{\mathcal{D}}\left(\left\{\frac{{\mathbf n}+p^t{\mathbf m}}{p^{\ell}}\right\}\right)\notag\\
&=t+\sum_{\ell=t+1}^{\infty}{\mathbf 1}_{\mathcal{D}}\left(\left\{\frac{{\mathbf n}+p^t{\mathbf m}}{p^{\ell}}\right\}\right)\label{rrr2}.
\end{align}

Let us write ${\mathbf m}=\sum_{k=0}^{\infty}{\mathbf m}_kp^k$, where the ${\mathbf m}_k\in\{0,\dots,p-1\}^d$ are zero except for a finite number of $k$. For all $\ell\geq t+1$, we have 
$$
\left\{\frac{{\mathbf n}+p^t{\mathbf m}}{p^{\ell}}\right\}=\frac{{\mathbf n}+p^t\left(\sum_{k=0}^{\ell-t-1}{\mathbf m}_kp^k\right)}{p^{\ell}}\geq\frac{p^t\left(\sum_{k=0}^{\ell-t-1}{\mathbf m}_kp^k\right)}{p^{\ell}}=\left\{\frac{{\mathbf m}}{p^{\ell-t}}\right\}.
$$
Thus, for all $\ell\geq t+1$, if $\left\{\frac{{\mathbf m}}{p^{\ell-t}}\right\}\in\mathcal{D}$, then there exists ${\mathbf L}\in\mathcal{E}$ such that 
$$
1\leq{\mathbf L}\cdot\left\{\frac{{\mathbf m}}{p^{\ell-t}}\right\}\leq{\mathbf L}\cdot\left\{\frac{{\mathbf n}+p^t{\mathbf m}}{p^{\ell}}\right\}, 
$$
which gives us $\left\{\frac{{\mathbf n}+p^t{\mathbf m}}{p^{\ell}}\right\}\in\mathcal{D}$. We get
$$
\sum_{\ell=t+1}^{\infty}{\mathbf 1}_{\mathcal{D}}\left(\left\{\frac{{\mathbf n}+p^t{\mathbf m}}{p^{\ell}}\right\}\right)\geq\sum_{\ell=t+1}^{\infty}{\mathbf 1}_{\mathcal{D}}\left(\left\{\frac{{\mathbf m}}{p^{\ell-t}}\right\}\right)=\sum_{\ell=1}^{\infty}{\mathbf 1}_{\mathcal{D}}\left(\left\{\frac{{\mathbf m}}{p^{\ell}}\right\}\right)=v_p(g({\mathbf m})),
$$
which, associated with \eqref{rrr2}, leads to $v_p(g({\mathbf n}+p^t{\mathbf m}))\geq t+v_p(g({\mathbf m}))$, \textit{i.e.} $g({\mathbf n}+p^t{\mathbf m})\in p^tg({\mathbf m})\mathbb{Z}_p$, as expected.

\subsubsection{Verification of assertion $(a_2)$}

Given $s\in\mathbb{N}$, ${\mathbf u}\in\Psi_s(\mathcal{N})$ and ${\mathbf v}\in\{0,\dots,p-1\}^d$ such that ${\mathbf v}+p{\mathbf u}\notin\Psi_{s+1}(\mathcal{N})$, we have to prove that
\begin{equation}\label{montr a2}
\frac{\mathcal{Q}({\mathbf u}+p^s{\mathbf m})}{\mathcal{Q}({\mathbf u})}\in p^{s+1}\frac{g({\mathbf m})}{g({\mathbf v}+p{\mathbf u})}\mathbb{Z}_p.
\end{equation}

First, we give another expression for
$$
\Psi_s(\mathcal{N})=\{{\mathbf u}\in\{0,\dots,p^s-1\}^d\,:\,\forall({\mathbf n},t)\in\mathcal{N},t\leq s,\forall{\mathbf j}\in\{0,\dots,p^{s-t}-1\}^d,{\mathbf u}\neq{\mathbf j}+p^{s-t}{\mathbf n}\}.
$$
For that purpose, we need the following lemma.

\begin{lemme}\label{equiv cond}
Given $s\in\mathbb{N}$, $s\geq 1$, and ${\mathbf u}\in\{0,\dots,p^s-1\}^d$, we write ${\mathbf u}=\sum_{k=0}^{s-1}{\mathbf u}_kp^k$, with ${\mathbf u}_k\in\{0,\dots,p-1\}^d$. Then, the following assertions are equivalent.
\begin{itemize}
\item[$(1)$] We have $\left\{{\mathbf u}/p^s\right\}\in\mathcal{D}$.
\item[$(2)$] There exists $({\mathbf n},t)\in\mathcal{N},t\leq s$ and ${\mathbf j}\in\{0,\dots,p^{s-t}-1\}^d$ such that ${\mathbf u}={\mathbf j}+p^{s-t}{\mathbf n}$.
\end{itemize}
\end{lemme}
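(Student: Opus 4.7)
\medskip

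My plan is to translate each of (1) and (2) into explicit arithmetic conditions on the base-$p$ digits of $\mathbf{u}$, and then prove the two implications separately. Writing $\mathbf{u}=\sum_{k=0}^{s-1}\mathbf{u}_kp^k$, I will denote by $\mathbf{n}^{(t)}:=\lfloor\mathbf{u}/p^{s-t}\rfloor=\sum_{k=0}^{t-1}\mathbf{u}_{s-t+k}p^k$ the integer formed by the ``top $t$ digits'' of $\mathbf{u}$. Then condition (2) is equivalent to: there exists $t\in\{1,\dots,s\}$ such that $(\mathbf{n}^{(t)},t)\in\mathcal{N}$, because the decomposition $\mathbf{u}=\mathbf{j}+p^{s-t}\mathbf{n}$ with $\mathbf{j}\in\{0,\dots,p^{s-t}-1\}^d$ forces $\mathbf{n}=\mathbf{n}^{(t)}$.

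For the easy direction $(2)\Rightarrow(1)$: if $(\mathbf{n}^{(t)},t)\in\mathcal{N}$, then applying the defining condition with $\ell=t$ gives $\{\mathbf{n}^{(t)}/p^t\}=\mathbf{n}^{(t)}/p^t\in\mathcal{D}$, so there exists $\mathbf{d}\in e\cup f$ with $\mathbf{d}\cdot\mathbf{n}^{(t)}\geq p^t$. Since $\mathbf{u}=\mathbf{j}+p^{s-t}\mathbf{n}^{(t)}$ with $\mathbf{j}\geq\mathbf{0}$, we get $\mathbf{d}\cdot\mathbf{u}/p^s\geq\mathbf{d}\cdot\mathbf{n}^{(t)}/p^t\geq 1$, and because $\mathbf{u}<p^s\mathbf{1}$ we have $\{\mathbf{u}/p^s\}=\mathbf{u}/p^s\in\mathcal{D}$.

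For $(1)\Rightarrow(2)$ I will choose $t_0:=\min\{t\in\{1,\dots,s\}:\mathbf{n}^{(t)}/p^{t}\in\mathcal{D}\}$; this minimum exists because $t=s$ gives $\mathbf{n}^{(s)}/p^s=\mathbf{u}/p^s\in\mathcal{D}$ by hypothesis. I then need to check that $(\mathbf{n}^{(t_0)},t_0)\in\mathcal{N}$, i.e.\ $\{\mathbf{n}^{(t_0)}/p^{\ell}\}\in\mathcal{D}$ for every $\ell\in\{1,\dots,t_0\}$. The case $\ell=t_0$ is the definition of $t_0$. For $\ell<t_0$, I will use the identity
$$
\mathbf{n}^{(t_0)}=p^{\ell}\mathbf{n}^{(t_0-\ell)}+\mathbf{m},\qquad\mathbf{m}:=\mathbf{n}^{(t_0)}\bmod p^{\ell},
$$
which yields $\{\mathbf{n}^{(t_0)}/p^{\ell}\}=\mathbf{m}/p^{\ell}$. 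Fixing any $\mathbf{d}\in e\cup f$ witnessing $\mathbf{n}^{(t_0)}/p^{t_0}\in\mathcal{D}$ (so $\mathbf{d}\cdot\mathbf{n}^{(t_0)}\geq p^{t_0}$) and using the minimality of $t_0$ at level $t_0-\ell$ (so $\mathbf{d}\cdot\mathbf{n}^{(t_0-\ell)}\leq p^{t_0-\ell}-1$), I obtain
$$
\mathbf{d}\cdot\mathbf{m}=\mathbf{d}\cdot\mathbf{n}^{(t_0)}-p^{\ell}\mathbf{d}\cdot\mathbf{n}^{(t_0-\ell)}\geq p^{t_0}-p^{\ell}(p^{t_0-\ell}-1)=p^{\ell},
$$
which shows $\{\mathbf{n}^{(t_0)}/p^{\ell}\}\in\mathcal{D}$. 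The \emph{same} $\mathbf{d}$ works for every $\ell<t_0$, which is what closes the argument.

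The main delicate point is precisely this last step: one must resist the temptation to pick a fresh $\mathbf{d}'$ for each $\ell$, because then no useful lower bound on $\mathbf{d}'\cdot\mathbf{n}^{(t_0)}$ is available. The telescoping cancellation $p^{t_0}-p^{\ell}(p^{t_0-\ell}-1)=p^{\ell}$ is tight and is what makes the minimal choice of $t_0$ work; any larger $t$ could fail the condition at intermediate levels (as one sees already for $d=1$). The rest is bookkeeping with base-$p$ expansions and the identity $\lfloor\lfloor x\rfloor/p^{\ell}\rfloor=\lfloor x/p^{\ell}\rfloor$.
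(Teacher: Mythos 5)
Your proof is correct, and it reaches the same key estimate as the paper but by a cleaner route. Both arguments for $(1)\Rightarrow(2)$ hinge on the same telescoping cancellation: fixing a single witness $\mathbf{d}\in\{\mathbf{e}_1,\dots,\mathbf{e}_{q_1},\mathbf{f}_1,\dots,\mathbf{f}_{q_2}\}$ with $\mathbf{d}\cdot\mathbf{n}^{(t_0)}\geq p^{t_0}$ and subtracting off the lower-order contribution $p^{\ell}\,\mathbf{d}\cdot\mathbf{n}^{(t_0-\ell)}\leq p^{\ell}(p^{t_0-\ell}-1)$ to get $\mathbf{d}\cdot\mathbf{m}\geq p^{\ell}$. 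The difference is organizational. The paper proves, by strong induction on $s$, the statement $\mathcal{B}_s$: for every $\mathbf{u}$ with $\{\mathbf{u}/p^s\}\in\mathcal{D}$, some truncation lies in $\mathcal{N}$; the inductive step shows by contradiction that if no shorter suffix works then the full word does, invoking $\mathcal{B}_{s-\ell}$ inside. You instead make the extremal choice $t_0:=\min\{t:\mathbf{n}^{(t)}/p^t\in\mathcal{D}\}$ up front, which bakes the needed upper bound $\mathbf{d}\cdot\mathbf{n}^{(t_0-\ell)}\leq p^{t_0-\ell}-1$ directly into the hypothesis, and then verifies $(\mathbf{n}^{(t_0)},t_0)\in\mathcal{N}$ in one pass over $\ell\in\{1,\dots,t_0\}$. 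This eliminates both the outer induction and the nested contradiction, so your argument is shorter and more transparent about where the bound comes from; the paper's formulation is closer in spirit to a recursive parsing of the digit string and makes the suffix structure of $\Psi_s(\mathcal{N})$ more explicit. One small point you handled correctly but is worth flagging: you need $t_0-\ell\geq 1$ (which holds since $\ell<t_0$) before invoking minimality, and you correctly note that the same fixed $\mathbf{d}$ must be reused across all $\ell$ for the cancellation to be tight.
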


\begin{proof}[Proof of Lemma \ref{equiv cond}]
$(1)\Rightarrow(2)$ : For all $s\geq 1$, ${\mathbf u}\in\{0,\dots,p^s-1\}^d$ such that $\{{\mathbf u}/p^s\}\in\mathcal{D}$ and all $i\in\{0,\dots,s-1\}$, we write $\mathcal{A}_{s,i}({\mathbf u})$ for the assertion: for all $\ell\in\{1,\dots,s-i\}$, we have $\left\{\left(\sum_{k=i}^{s-1}{\mathbf u}_kp^{k-i}\right)/p^{\ell}\right\}\in\mathcal{D}$.

For all $s\geq 1$, we write $\mathcal{B}_s$ for the assertion: for all ${\mathbf u}\in\{0,\dots,p^s-1\}^d$ such that $\{{\mathbf u}/p^s\}\in\mathcal{D}$, there exists $i\in\{0,\dots,s-1\}$, such that $\mathcal{A}_{s,i}({\mathbf u})$ is true.
\medskip

First, we will prove by induction on $s$ that, for all $s\geq 1$, $\mathcal{B}_s$ is true. 

If $s=1$, then, for all ${\mathbf u}\in\{0,\dots,p-1\}^d$ such that $\{{\mathbf u}/p\}\in\mathcal{D}$, assertion $\mathcal{A}_{1,0}({\mathbf u})$ corresponds to no other assertion than $\{{\mathbf u}/p\}\in\mathcal{D}$ and thus is true. Hence, $\mathcal{B}_1$ is true. 

Given $s\geq 2$ such that $\mathcal{B}_1,\dots,\mathcal{B}_{s-1}$ are true, and ${\mathbf u}\in\{0,\dots,p^s-1\}^d$ verifying $\{{\mathbf u}/p^s\}\in\mathcal{D}$ such that $\mathcal{A}_{s,1}({\mathbf u}),\dots,\mathcal{A}_{s,s-1}({\mathbf u})$ are false, we will prove that assertion $\mathcal{A}_{s,0}({\mathbf u})$ is true. This will imply the validity of $\mathcal{B}_s$ and will finish the induction on~$s$. 

Let us give a proof by contradiction, assuming that there exists $\ell\in\{1,\dots,s\}$ such that 
$$
{\mathbf a}_{\ell}:=\frac{\sum_{k=0}^{\ell-1}{\mathbf u}_kp^k}{p^{\ell}}=\left\{\frac{\sum_{k=0}^{s-1}{\mathbf u}_kp^k}{p^{\ell}}\right\}\notin\mathcal{D}.
$$ 
We actually have $\ell\in\{1,\dots,s-1\}$ because $\{\mathbf{u}/p^s\}\in\mathcal{D}$. For all ${\mathbf L}\in\{{\mathbf e}_1,\dots,{\mathbf e}_{q_1},{\mathbf f}_1,\dots,{\mathbf f}_{q_2}\}$, we have ${\mathbf L}\cdot{\mathbf a}_{\ell}<1$. We write
$$
\left\{\frac{{\mathbf u}}{p^s}\right\}=\frac{{\mathbf u}}{p^s}=\frac{p^{\ell}{\mathbf a}_{\ell}+p^{\ell}\sum_{k=\ell}^{s-1}{\mathbf u}_kp^{k-\ell}}{p^{s}}=\frac{{\mathbf a}_{\ell}}{p^{s-\ell}}+\frac{\sum_{k=\ell}^{s-1}{\mathbf u}_kp^{k-\ell}}{p^{s-\ell}}.
$$
Since $\{{\mathbf u}/p^s\}\in\mathcal{D}$, there exists ${\mathbf L}\in\{{\mathbf e}_1,\dots,{\mathbf e}_{q_1},{\mathbf f}_1,\dots,{\mathbf f}_{q_2}\}$ such that
$$
1\leq{\mathbf L}\cdot\left\{\frac{{\mathbf u}}{p^s}\right\}=\frac{{\mathbf L}\cdot{\mathbf a}_{\ell}}{p^{s-\ell}}+{\mathbf L}\cdot\frac{\sum_{k=\ell}^{s-1}{\mathbf u}_kp^{k-\ell}}{p^{s-\ell}}<\frac{1}{p^{s-\ell}}+{\mathbf L}\cdot\frac{\sum_{k=\ell}^{s-1}{\mathbf u}_kp^{k-\ell}}{p^{s-\ell}},
$$
which leads to ${\mathbf L}\cdot\left(\sum_{k=\ell}^{s-1}{\mathbf u}_kp^{k-\ell}\right)>p^{s-\ell}-1$. Since ${\mathbf L}\cdot\left(\sum_{k=\ell}^{s-1}{\mathbf u}_kp^{k-\ell}\right)$ is an integer, we get ${\mathbf L}\cdot\left(\sum_{k=\ell}^{s-1}{\mathbf u}_kp^{k-\ell}\right)\geq p^{s-\ell}$, \textit{i.e.} $\left\{\left(\sum_{k=\ell}^{s-1}{\mathbf u}_kp^{k-\ell}\right)/p^{s-\ell}\right\}\in\mathcal{D}$. We write ${\mathbf v}:=\sum_{k=\ell}^{s-1}{\mathbf u}_kp^{k-\ell}\in\{0,\dots,p^{s-\ell}-1\}^d$. Thus we have $\{{\mathbf v}/p^{s-\ell}\}\in\mathcal{D}$ and, applying $\mathcal{B}_{s-\ell}$, we obtain that there exists $i\in\{0,\dots,s-\ell-1\}$ such that $\mathcal{A}_{s-\ell,i}({\mathbf v})$ is true, \textit{i.e.}, for all $r\in\{1,\dots,s-\ell-i\}$, we have $\left\{\left(\sum_{k=i}^{s-\ell-1}{\mathbf v}_kp^{k-i}\right)/p^{r}\right\}\in\mathcal{D}$. Furthermore, for all $k$, we have ${\mathbf v}_k={\mathbf u}_{\ell+k}$ and therefore $\sum_{k=i}^{s-\ell-1}{\mathbf v}_kp^{k-i}=\sum_{k=i+\ell}^{s-1}{\mathbf u}_kp^{k-i-\ell}$. Thereby, assertion $\mathcal{A}_{s-\ell,i}({\mathbf v})$ becomes: for all $r\in\{1,\dots,s-\ell-i\}$, we have $\left\{\left(\sum_{k=i+\ell}^{s-1}{\mathbf u}_kp^{k-i-\ell}\right)/p^{r}\right\}\in\mathcal{D}$; which corresponds to no other assertion than $\mathcal{A}_{s,i+\ell}({\mathbf u})$. Since we assumed that $\mathcal{A}_{s,1}({\mathbf u}),\dots,\mathcal{A}_{s,s-1}({\mathbf u})$ are false, we get a contradiction. Hence $\mathcal{A}_{s,0}({\mathbf u})$ is true and $\mathcal{B}_s$ is also true, which finishes the induction on $s$.
\medskip

As $\{{\mathbf u}/p^s\}\in\mathcal{D}$, assertion $\mathcal{B}_s$ tells us that an $i\in\{0,\dots,s-1\}$ exists such that $\mathcal{A}_{s,i}({\mathbf u})$ is true, \textit{i.e.} for all $\ell\in\{1,\dots,s-i\}$, we have $\left\{\left(\sum_{k=i}^{s-1}p^{k-i}{\mathbf u}_k\right)/p^{\ell}\right\}\in\mathcal{D}$. Thus we have $\left(\sum_{k=i}^{s-1}p^{k-i}{\mathbf u}_k,s-i\right)\in\mathcal{N}$ and ${\mathbf u}=\sum_{k=0}^{i-1}p^k{\mathbf u}_k+p^i\sum_{k=i}^{s-1}p^{k-i}{\mathbf u}_k$. Therefore, the assertion $(2)$ is valid with $s-i$ instead of $t$, $\sum_{k=i}^{s-1}p^{k-i}{\mathbf u}_k$ instead of ${\mathbf n}$ and $\sum_{k=0}^{i-1}p^k{\mathbf u}_k$ instead of~${\mathbf j}$.
\medskip

$(2)\Rightarrow(1)$ : We have
$$
\left\{\frac{{\mathbf u}}{p^s}\right\}=\frac{{\mathbf u}}{p^s}=\frac{{\mathbf j}+p^{s-t}{\mathbf n}}{p^s}\geq\frac{{\mathbf n}}{p^t}=\left\{\frac{{\mathbf n}}{p^t}\right\}\in\mathcal{D}
$$
and so $\left\{{\mathbf u}/p^s\right\}\in\mathcal{D}$, as expected.
\end{proof}

According to Lemma \ref{equiv cond}, we obtain
\begin{equation}\label{idPsi}
\Psi_s(\mathcal{N})=\{{\mathbf u}\in\{0,\dots,p^s-1\}^d\,:\,\{{\mathbf u}/p^s\}\notin\mathcal{D}\}.
\end{equation}

Thus, for all ${\mathbf u}\in\Psi_s(\mathcal{N})$ and $\ell\geq s$, we have $\{{\mathbf u}/p^{\ell}\}={\mathbf u}/p^{\ell}\leq{\mathbf u}/p^s=\{{\mathbf u}/p^s\}$, which gives us that, for all ${\mathbf L}\in\{{\mathbf e}_1,\dots,{\mathbf e}_{q_1},{\mathbf f}_1,\dots,{\mathbf f}_{q_2}\}$ and $\ell\geq s$, we have ${\mathbf L}\cdot\{{\mathbf u}/p^{\ell}\}\leq{\mathbf L}\cdot\{{\mathbf u}/p^s\}<1$ and so $\{{\mathbf u}/p^{\ell}\}\notin\mathcal{D}$. As a result, for all $\ell\geq s$, we have $\Delta(\{{\mathbf u}/p^{\ell}\})=0$ and thus
$$
v_p\left(\mathcal{Q}({\mathbf u})\right)=\sum_{\ell=1}^{\infty}\Delta\left(\left\{\frac{{\mathbf u}}{p^{\ell}}\right\}\right)=\sum_{\ell=1}^{s}\Delta\left(\left\{\frac{{\mathbf u}}{p^{\ell}}\right\}\right).
$$
Furthermore, we have
$$
v_p\left(\mathcal{Q}({\mathbf u}+p^s{\mathbf m})\right)=\sum_{\ell=1}^{\infty}\Delta\left(\left\{\frac{{\mathbf u}+p^s{\mathbf m}}{p^{\ell}}\right\}\right)=\sum_{\ell=1}^s\Delta\left(\left\{\frac{{\mathbf u}}{p^{\ell}}\right\}\right)+\sum_{\ell=s+1}^{\infty}\Delta\left(\left\{\frac{{\mathbf u}+p^s{\mathbf m}}{p^{\ell}}\right\}\right),
$$
which leads to
\begin{equation}\label{eqtrans}
v_p\left(\frac{\mathcal{Q}({\mathbf u}+p^s{\mathbf m})}{\mathcal{Q}({\mathbf u})}\right)=\sum_{\ell=s+1}^{\infty}\Delta\left(\left\{\frac{{\mathbf u}+p^s{\mathbf m}}{p^{\ell}}\right\}\right).
\end{equation}
We write ${\mathbf m}=\sum_{k=0}^{\infty}p^k{\mathbf m}_k$, with ${\mathbf m}_k\in\{0,\dots,p-1\}^d$. For all $\ell\geq s+1$, we have 
$$
\left\{\frac{{\mathbf u}+p^s{\mathbf m}}{p^{\ell}}\right\}=\frac{{\mathbf u}+p^s\sum_{k=0}^{\ell-1-s}p^k{\mathbf m}_k}{p^{\ell}}\geq\frac{\sum_{k=0}^{\ell-1-s}p^k{\mathbf m}_k}{p^{\ell-s}}=\left\{\frac{{\mathbf m}}{p^{\ell-s}}\right\}
$$
and thus 
\begin{align}
\sum_{\ell=s+1}^{\infty}\Delta\left(\left\{\frac{{\mathbf u}+p^s{\mathbf m}}{p^{\ell}}\right\}\right)&\geq\sum_{\ell=s+1}^{\infty}{\mathbf 1}_{\mathcal{D}}\left(\left\{\frac{{\mathbf u}+p^s{\mathbf m}}{p^{\ell}}\right\}\right)\label{explik2}\\
&\geq\sum_{\ell=s+1}^{\infty}{\mathbf 1}_{\mathcal{D}}\left(\left\{\frac{{\mathbf m}}{p^{\ell-s}}\right\}\right)=\sum_{\ell=1}^{\infty}{\mathbf 1}_{\mathcal{D}}\left(\left\{\frac{{\mathbf m}}{p^{\ell}}\right\}\right)=v_p(g({\mathbf m})),\label{trans23}
\end{align}
where inequality \eqref{explik2} is true because, for all ${\mathbf x}\in\mathcal{D}$, we have $\Delta({\mathbf x})\geq 1$. Applying \eqref{trans23} to \eqref{eqtrans}, we get
$$
v_p\left(\frac{\mathcal{Q}({\mathbf u}+p^s{\mathbf m})}{\mathcal{Q}({\mathbf u})}\right)\geq v_p(g({\mathbf m})).
$$
\medskip

Thus, to verify assertion $(a_2)$, we only have to prove that, for all ${\mathbf u}\in\Psi_s(\mathcal{N})$ and ${\mathbf v}\in\{0,\dots,p-1\}^d$ such that ${\mathbf v}+p{\mathbf u}\notin\Psi_{s+1}(\mathcal{N})$, we have $g({\mathbf v}+p{\mathbf u})\in p^{s+1}\mathbb{Z}_p$. 

We write ${\mathbf u}=\sum_{k=0}^{s-1}p^k{\mathbf u}_k$, with ${\mathbf u}_k\in\{0,\dots,p-1\}^d$. We have $\{({\mathbf v}+p{\mathbf u})/p\}={\mathbf v}/p$ and, for all $\ell\geq 2$, we have $\{({\mathbf v}+p{\mathbf u})/p^{\ell}\}=\left({\mathbf v}+p\sum_{k=0}^{\ell-2}p^k{\mathbf u}_k\right)/p^{\ell}$. We get
$$
v_p(g({\mathbf v}+p{\mathbf u}))=\sum_{\ell=1}^{\infty}{\mathbf 1}_{\mathcal{D}}\left(\left\{\frac{{\mathbf v}+p{\mathbf u}}{p^{\ell}}\right\}\right)\geq{\mathbf 1}_{\mathcal{D}}\left(\frac{{\mathbf v}}{p}\right)+\sum_{\ell=2}^{s+1}{\mathbf 1}_{\mathcal{D}}\left(\frac{{\mathbf v}+p\sum_{k=0}^{\ell-2}p^k{\mathbf u}_k}{p^{\ell}}\right).
$$

Thus, if we prove that ${\mathbf v}/p\in\mathcal{D}$ and that $\left({\mathbf v}+p\sum_{k=0}^{\ell-2}p^k{\mathbf u}_k\right)/p^{\ell}\in\mathcal{D}$ for all $\ell\in\{2,\dots,s+1\}$, then we would have $v_p(g({\mathbf v}+p{\mathbf u}))\geq s+1$.
\medskip
\begin{itemize}
\item Let us prove that ${\mathbf v}/p\in\mathcal{D}$. 
\end{itemize}
\medskip

As ${\mathbf v}+p{\mathbf u}\notin\Psi_{s+1}(\mathcal{N})$, we obtain, according to \eqref{idPsi}, that $\{({\mathbf v}+p{\mathbf u})/p^{s+1}\}\in\mathcal{D}$. Thus there exists ${\mathbf L}\in\{{\mathbf e}_1,\dots,{\mathbf e}_{q_1},{\mathbf f}_1,\dots,{\mathbf f}_{q_2}\}$ such that ${\mathbf L}\cdot\{({\mathbf v}+p{\mathbf u})/p^{s+1}\}\geq 1$. We get
\begin{equation}\label{trans32}
1\leq{\mathbf L}\cdot\frac{{\mathbf v}+p\sum_{k=0}^{s-1}p^k{\mathbf u}_k}{p^{s+1}}={\mathbf L}\cdot\frac{{\mathbf v}}{p^{s+1}}+{\mathbf L}\cdot\frac{\sum_{k=0}^{s-1}p^k{\mathbf u}_k}{p^s}={\mathbf L}\cdot\frac{{\mathbf v}}{p^{s+1}}+{\mathbf L}\cdot\left\{\frac{{\mathbf u}}{p^s}\right\}.
\end{equation}

As ${\mathbf u}\in\Psi_s(\mathcal{N})$, we have $\{{\mathbf u}/p^s\}\notin\mathcal{D}$ and so ${\mathbf L}\cdot\{{\mathbf u}/p^s\}<1$. We have ${\mathbf L}\cdot\{{\mathbf u}/p^s\}\in\frac{1}{p^s}\mathbb{N}$ thus ${\mathbf L}\cdot\{{\mathbf u}/p^s\}\leq(p^s-1)/p^s$ and we get, \textit{via} inequality \eqref{trans32}, that ${\mathbf L}\cdot{\mathbf v}/p^{s+1}\geq 1/p^s$, \textit{i.e.} ${\mathbf L}\cdot{\mathbf v}/p\geq 1$. Thereby, we have ${\mathbf v}/p\in\mathcal{D}$.
\medskip
\begin{itemize}
\item Let us prove that, for all $\ell\in\{2,\dots,s+1\}$, we have $\left({\mathbf v}+p\sum_{k=0}^{\ell-2}p^k{\mathbf u}_k\right)/p^{\ell}\in\mathcal{D}$.
\end{itemize}
\medskip
We assume that $s\geq 1$. Given $\ell\in\{2,\dots,s+1\}$, we have
\begin{equation}\label{louder}
1\leq{\mathbf L}\cdot\frac{{\mathbf v}+p\sum_{k=0}^{s-1}p^k{\mathbf u}_k}{p^{s+1}}={\mathbf L}\cdot\frac{{\mathbf v}+p\sum_{k=0}^{\ell-2}p^k{\mathbf u}_k}{p^{s+1}}+{\mathbf L}\cdot\frac{p\sum_{k=\ell-1}^{s-1}p^{k}{\mathbf u}_k}{p^{s+1}}.
\end{equation}
We have ${\mathbf u}\in\Psi_s(\mathcal{N})$ and ${\mathbf u}={\mathbf u}_0+p\sum_{k=1}^{s-1}p^{k-1}{\mathbf u}_k$. Thus, applying \eqref{Psi+1} with $t=0$, we obtain $\sum_{k=1}^{s-1}p^{k-1}{\mathbf u}_k\in\Psi_{s-1}(\mathcal{N})$. Iterating \eqref{Psi+1}, we finally get that $\sum_{k=\ell-1}^{s-1}p^{k-\ell+1}{\mathbf u}_k\in\Psi_{s-\ell+1}(\mathcal{N})$. Following Lemma \ref{equiv cond}, we get
$$
\frac{p\sum_{k=\ell-1}^{s-1}p^{k}{\mathbf u}_k}{p^{s+1}}=\frac{\sum_{k=\ell-1}^{s-1}p^{k-\ell+1}{\mathbf u}_k}{p^{s-\ell+1}}=\left\{\frac{\sum_{k=\ell-1}^{s-1}p^{k-\ell+1}{\mathbf u}_k}{p^{s-\ell+1}}\right\}\notin\mathcal{D}.
$$
In particular, we obtain $1>{\mathbf L}\cdot\left(\sum_{k=\ell-1}^{s-1}p^{k-\ell+1}{\mathbf u}_k\right)/p^{s-\ell+1}\in\frac{1}{p^{s-\ell+1}}\mathbb{N}$. Thus
we have ${\mathbf L}\cdot\left(\sum_{k=\ell-1}^{s-1}p^{k-\ell+1}{\mathbf u}_k\right)/p^{s-\ell+1}\leq (p^{s-\ell+1}-1)/p^{s-\ell+1}$. Using this latest inequality in \eqref{louder}, we get
$$
{\mathbf L}\cdot\frac{{\mathbf v}+p\sum_{k=0}^{\ell-2}p^k{\mathbf u}_k}{p^{s+1}}\geq\frac{1}{p^{s-\ell+1}}.
$$

Therefore, for all $\ell\in\{2,\dots,s+1\}$, we have 
\begin{equation}\label{recap765}
{\mathbf L}\cdot\left\{\frac{{\mathbf v}+p{\mathbf u}}{p^{\ell}}\right\}={\mathbf L}\cdot\frac{{\mathbf v}+p\sum_{k=0}^{\ell-2}p^k{\mathbf u}_k}{p^{\ell}}\geq 1
\end{equation}
and, for all $\ell\in\{2,\dots,s+1\}$, we obtain $\left\{\left({\mathbf v}+p{\mathbf u}\right)/p^{\ell}\right\}\in\mathcal{D}$. This completes the verification of assertion $(a_2)$.

\subsubsection{Verification of assertions $(a)$ and $(a_1)$}

For all $s\in\mathbb{N}$, ${\mathbf v}\in\{0,\dots,p-1\}^d$ and ${\mathbf u}\in\Psi_s(\mathcal{N})$, we set $\theta_s({\mathbf v}+{\mathbf u}p):=\mathcal{Q}({\mathbf v}+{\mathbf u}p)$ if ${\mathbf v}+{\mathbf u}p\notin\Psi_{s+1}(\mathcal{N})$, and $\theta_s({\mathbf v}+{\mathbf u}p):=g({\mathbf v}+{\mathbf u}p)$ if ${\mathbf v}+{\mathbf u}p\in\Psi_{s+1}(\mathcal{N})$.

The aim of this section is to prove the following assertion: for all $s\in\mathbb{N}$, ${\mathbf v}\in\{0,\dots,p-1\}^d$, ${\mathbf u}\in\Psi_s(\mathcal{N})$ and ${\mathbf m}\in\mathbb{N}^d$, we have
\begin{equation}\label{Hypothèse (iii)}
\frac{\mathcal{Q}({\mathbf v}+{\mathbf u}p+{\mathbf m}p^{s+1})}{\mathcal{Q}({\mathbf v}+{\mathbf u}p)}-\frac{\mathcal{Q}({\mathbf u}+{\mathbf m}p^{s})}{\mathcal{Q}({\mathbf u})}\in p^{s+1}\frac{g({\mathbf m})}{\theta_s({\mathbf v}+{\mathbf u}p)}\mathbb{Z}_p,
\end{equation}
which will prove assertions $(a)$ and $(a_1)$ of Theorem \ref{theo généralisé}. Indeed, for all ${\mathbf v}\in\{0,\dots,p-1\}^d$ and ${\mathbf u}\in\Psi_s(\mathcal{N})$, we have $\mathcal{Q}({\mathbf v}+{\mathbf u}p)\in g({\mathbf v}+{\mathbf u}p)\mathbb{Z}_p$ so that 
$$
p^{s+1}\frac{g({\mathbf m})}{\theta_s({\mathbf v}+{\mathbf u}p)}\in p^{s+1}\frac{g({\mathbf m})}{\mathcal{Q}({\mathbf v}+{\mathbf u}p)}\mathbb{Z}_p
$$
and \eqref{Hypothèse (iii)} implies $(a)$. Furthermore, according to the definition of $\theta_s$, when ${\mathbf v}+{\mathbf u}p\in\Psi_{s+1}(\mathcal{N})$, congruence \eqref{Hypothèse (iii)} implies $(a_1)$.

Congruence \eqref{Hypothèse 
 (iii)} is valid if and only if, for all ${\mathbf v}\in\{0,\dots,p-1\}^d$, ${\mathbf u}\in\Psi_s(\mathcal{N})$ and ${\mathbf m}\in\mathbb{N}^d$, we have
$$
\left(1-\frac{\mathcal{Q}({\mathbf v}+{\mathbf u}p)}{\mathcal{Q}({\mathbf u})}\frac{\mathcal{Q}({\mathbf u}+{\mathbf m}p^{s})}{\mathcal{Q}({\mathbf v}+{\mathbf u}p+{\mathbf m}p^{s+1})}\right)\frac{\mathcal{Q}({\mathbf v}+{\mathbf u}p+{\mathbf m}p^{s+1})}{\mathcal{Q}({\mathbf v}+{\mathbf u}p)}\in p^{s+1}\frac{g({\mathbf m})}{\theta_s({\mathbf v}+{\mathbf u}p)}\mathbb{Z}_p.
$$
In the sequel of the proof, we set 
$$
X_s({\mathbf v},{\mathbf u},{\mathbf m}):=\frac{\mathcal{Q}({\mathbf v}+{\mathbf u}p)}{\mathcal{Q}({\mathbf u})}\frac{\mathcal{Q}({\mathbf u}+{\mathbf m}p^{s})}{\mathcal{Q}({\mathbf v}+{\mathbf u}p+{\mathbf m}p^{s+1})}.
$$ 
Thus, to prove \eqref{Hypothèse (iii)}, we only have to prove that
\begin{equation}\label{eq 3.13}
(X_s({\mathbf v},{\mathbf u},{\mathbf m})-1)\frac{\mathcal{Q}({\mathbf v}+{\mathbf u}p+{\mathbf m}p^{s+1})}{g({\mathbf m})}\in p^{s+1}\frac{\mathcal{Q}({\mathbf v}+{\mathbf u}p)}{\theta_s({\mathbf v}+{\mathbf u}p)}\mathbb{Z}_p.
\end{equation}

In order to estimate the valuation of $X_s({\mathbf v},{\mathbf u},{\mathbf m})-1$, let us set, for all ${\mathbf v}\in\{0,\dots,p-1\}^d$, ${\mathbf u}\in\{0,\dots,p^s-1\}^d$, $s\in\mathbb{N}$ and ${\mathbf m}\in\mathbb{N}^d$, 
$$
Y_s({\mathbf v},{\mathbf u},{\mathbf m}):=\frac{\prod_{i=1}^{q_2}\prod_{j=1}^{\lfloor {\mathbf f}_i\cdot{\mathbf v}/p\rfloor}\left(1+\frac{{\mathbf f}_i\cdot{\mathbf m}p^{s}}{{\mathbf f}_i\cdot{\mathbf u}+j}\right)}{\prod_{i=1}^{q_1}\prod_{j=1}^{\lfloor {\mathbf e}_i\cdot{\mathbf v}/p\rfloor}\left(1+\frac{{\mathbf e}_i\cdot{\mathbf m}p^{s}}{{\mathbf e}_i\cdot{\mathbf u}+j}\right)}.
$$
Given $s\in\mathbb{N}$, ${\mathbf m}\in\mathbb{N}^d$ and ${\mathbf a}\in\{0,\dots,p^s-1\}^d$, we write $\eta_s({\mathbf a},{\mathbf m}):=\sum_{\ell=s+1}^{\infty}\Delta\left(\left\{\frac{{\mathbf a}+{\mathbf m}p^{s}}{p^{\ell}}\right\}\right)$. We state four lemmas, which we prove in Section \ref{démo lemme Y}.

\begin{lemme}\label{définition de Y}
For all $s\in\mathbb{N}$, ${\mathbf v}\in\{0,\dots,p-1\}^d$, ${\mathbf u}\in\Psi_s(\mathcal{N})$ and ${\mathbf m}\in\mathbb{N}^d$, we have $X_s({\mathbf v},{\mathbf u},{\mathbf m})\in Y_s({\mathbf v},{\mathbf u},{\mathbf m})\left(1+p^{s+1}\mathbb{Z}_p\right)$ and $v_p(Y_s({\mathbf v},{\mathbf u},{\mathbf m}))\geq\eta_{s}({\mathbf u},{\mathbf m})-\eta_{s+1}({\mathbf v}+{\mathbf u}p,{\mathbf m})$.
\end{lemme}

\begin{lemme}\label{lemme theta Q(a)}
Given $s\in\mathbb{N}$, ${\mathbf v}\in\{0,\dots,p-1\}^d$ and ${\mathbf u}\in\{0,\dots,p^{s}-1\}^d$, if there exists $j\in\{1,\dots,s+1\}$ such that $\{({\mathbf v}+{\mathbf u}p)/p^{j}\}\notin\mathcal{D}$, then we have $Y_s({\mathbf v},{\mathbf u},{\mathbf m})\in 1+p^{s-j+2}\mathbb{Z}_p$.
\end{lemme}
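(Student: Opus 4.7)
The plan is to estimate each factor of $Y_s(\mathbf{v},\mathbf{u},\mathbf{m})$ separately. Since $1+p^{s-j+2}\mathbb{Z}_p$ is a multiplicative subgroup of $\mathbb{Z}_p^{\times}$, it will suffice to prove that every factor $1+\mathbf{d}\cdot\mathbf{m}\,p^s/(\mathbf{d}\cdot\mathbf{u}+j_0)$ appearing in the numerator or denominator of $Y_s$ lies in $1+p^{s-j+2}\mathbb{Z}_p$; equivalently, that $v_p(\mathbf{d}\cdot\mathbf{u}+j_0)\leq j-2$ for every $\mathbf{d}$ among $\mathbf{e}_1,\dots,\mathbf{e}_{q_1},\mathbf{f}_1,\dots,\mathbf{f}_{q_2}$ and every $j_0\in\{1,\dots,\lfloor\mathbf{d}\cdot\mathbf{v}/p\rfloor\}$.

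To establish this valuation bound, I would write $\mathbf{u}=\sum_{k=0}^{s-1}\mathbf{u}_kp^k$ with $\mathbf{u}_k\in\{0,\dots,p-1\}^d$ and observe that, since $j\leq s+1$,
\[
\left\{\frac{\mathbf{v}+\mathbf{u}p}{p^j}\right\}=\frac{\mathbf{v}+p\sum_{k=0}^{j-2}\mathbf{u}_kp^k}{p^j}.
\]
Setting $b:=\mathbf{d}\cdot\sum_{k=0}^{j-2}\mathbf{u}_kp^k$, the hypothesis $\{(\mathbf{v}+\mathbf{u}p)/p^j\}\notin\mathcal{D}$ yields $\mathbf{d}\cdot\mathbf{v}+pb<p^j$ for every $\mathbf{d}$ in the finite list above. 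Combined with $pj_0\leq\mathbf{d}\cdot\mathbf{v}$, this forces $j_0+b\in\{1,\dots,p^{j-1}-1\}$, and in particular $p^{j-1}\nmid(j_0+b)$. Since $\mathbf{d}\cdot\mathbf{u}\equiv b\pmod{p^{j-1}}$ by construction, the same non-divisibility transfers to $\mathbf{d}\cdot\mathbf{u}+j_0$, giving $v_p(\mathbf{d}\cdot\mathbf{u}+j_0)\leq j-2$.

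From this bound, $v_p\bigl(\mathbf{d}\cdot\mathbf{m}\,p^s/(\mathbf{d}\cdot\mathbf{u}+j_0)\bigr)\geq s-j+2$, every factor in the definition of $Y_s$ lies in $1+p^{s-j+2}\mathbb{Z}_p$, and closure of this set under products and inverses yields the lemma. The hardest step will be the arithmetic one that converts the geometric avoidance condition $\{(\mathbf{v}+\mathbf{u}p)/p^j\}\notin\mathcal{D}$ into the integer inequality $j_0+b\leq p^{j-1}-1$; once that is secured, the remaining manipulations are routine $p$-adic bookkeeping.
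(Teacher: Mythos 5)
Your argument is correct and gives a genuinely more elementary proof than the one in the paper. The paper establishes the lemma via a double induction: assertion $\mathcal{A}_r$ rewrites the product $\prod_{n=1}^{\lfloor\mathbf{L}\cdot\mathbf{v}/p\rfloor}\bigl(1+\frac{\mathbf{L}\cdot\mathbf{m}p^s}{\mathbf{L}\cdot\mathbf{u}+n}\bigr)$, modulo a $1+O(p^{s-r+1})$ factor, as a product over a shrinking range $\{a_{\mathbf{L},r},\dots,b_{\mathbf{L},r}\}$ obtained by peeling off one $p$-adic digit of $\mathbf{u}$ at a time, while assertion $\mathcal{B}_k$ shows $a_{\mathbf{L},k}\geq 1$ and $b_{\mathbf{L},k}\leq\lfloor\mathbf{L}\cdot\{(\mathbf{v}+\mathbf{u}p)/p^{k+1}\}\rfloor$; the hypothesis at level $j$ then forces the range to be empty when $r=j-1$, so the entire product is swallowed into the $1+O(p^{s-j+2})$ error. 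You instead estimate each factor of $Y_s$ directly: from $\mathbf{d}\cdot\mathbf{v}+pb<p^j$ and $pj_0\leq\mathbf{d}\cdot\mathbf{v}$ you deduce $1\leq j_0+b\leq p^{j-1}-1$, and the congruence $\mathbf{d}\cdot\mathbf{u}\equiv b\pmod{p^{j-1}}$ then gives $v_p(\mathbf{d}\cdot\mathbf{u}+j_0)\leq j-2$, so every factor lies in $1+p^{s-j+2}\mathbb{Z}_p$, a multiplicative subgroup of $\mathbb{Z}_p^{\times}$ because $s-j+2\geq 1$. Your route dispenses with the inductions entirely and makes explicit the arithmetic fact the paper reaches indirectly, namely that no denominator $\mathbf{d}\cdot\mathbf{u}+j_0$ can be divisible by $p^{j-1}$ under the hypothesis. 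Both methods yield exactly the same bound $1+p^{s-j+2}\mathbb{Z}_p$; yours is shorter and, I would say, clearer about why the lemma is true.
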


\begin{lemme}\label{valuation du quotient avec g}
For all $s\in\mathbb{N}$, ${\mathbf a}\in\{0,\dots,p^{s+1}-1\}^d$ and ${\mathbf m}\in\mathbb{N}^d$, we have
\begin{equation}\label{eq du lemme 20}
\eta_{s+1}({\mathbf a},{\mathbf m})\geq \mu({\mathbf m})
\end{equation}
and
\begin{equation}\label{cor valuation du quotient avec g}
v_p\left(\frac{\mathcal{Q}({\mathbf a}+{\mathbf m}p^{s+1})}{g({\mathbf m})}\right)\geq \sum_{\ell=1}^{s+1}\Delta\left(\left\{\frac{{\mathbf a}}{p^{\ell}}\right\}\right).
\end{equation}
\end{lemme}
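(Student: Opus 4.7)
The plan is to prove \eqref{eq du lemme 20} first, and then derive \eqref{cor valuation du quotient avec g} from it by splitting the sum defining $v_p(\mathcal{Q}(\mathbf{a}+\mathbf{m}p^{s+1}))$ at $\ell = s+1$.

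For \eqref{eq du lemme 20}, the key observation is a componentwise inequality between fractional parts. Writing $\mathbf{m} = \sum_{k=0}^{\infty}\mathbf{m}_k p^k$ with $\mathbf{m}_k \in \{0,\dots,p-1\}^d$, I would check that, for any $\ell \geq s+2$,
$$
\left\{\frac{\mathbf{a}+\mathbf{m}p^{s+1}}{p^{\ell}}\right\} = \frac{\mathbf{a}+p^{s+1}\sum_{k=0}^{\ell-s-2}\mathbf{m}_k p^k}{p^{\ell}} \geq \frac{\sum_{k=0}^{\ell-s-2}\mathbf{m}_k p^k}{p^{\ell-s-1}} = \left\{\frac{\mathbf{m}}{p^{\ell-s-1}}\right\}
$$
componentwise, since $\mathbf{a} \geq \mathbf{0}$. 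Consequently, whenever $\{\mathbf{m}/p^{\ell-s-1}\} \in \mathcal{D}$, the existence of some $\mathbf{d} \in \{\mathbf{e}_1,\dots,\mathbf{e}_{q_1},\mathbf{f}_1,\dots,\mathbf{f}_{q_2}\}$ with $\mathbf{d} \cdot \{\mathbf{m}/p^{\ell-s-1}\} \geq 1$ forces the same inequality for $\{(\mathbf{a}+\mathbf{m}p^{s+1})/p^{\ell}\}$, hence this latter point also lies in $\mathcal{D}$. Combined with the hypothesis $\Delta \geq \mathbf{1}_{\mathcal{D}}$ on $[0,1]^d$ (a consequence of $\Delta \geq 1$ on $\mathcal{D}$ and $\Delta \geq 0$ elsewhere), this yields
$$
\eta_{s+1}(\mathbf{a},\mathbf{m}) \geq \sum_{\ell=s+2}^{\infty}\mathbf{1}_{\mathcal{D}}\!\left(\left\{\frac{\mathbf{a}+\mathbf{m}p^{s+1}}{p^{\ell}}\right\}\right) \geq \sum_{\ell=s+2}^{\infty}\mathbf{1}_{\mathcal{D}}\!\left(\left\{\frac{\mathbf{m}}{p^{\ell-s-1}}\right\}\right) = \mu(\mathbf{m}),
$$
after re-indexing $\ell \mapsto \ell - s - 1$.

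For \eqref{cor valuation du quotient avec g}, I would use the formula $v_p(\mathcal{Q}(\mathbf{n})) = \sum_{\ell=1}^{\infty}\Delta(\{\mathbf{n}/p^{\ell}\})$ recalled in Section \ref{section demLandau}. Splitting the sum at $\ell = s+1$ and using the fact that, for $\ell \in \{1,\dots,s+1\}$, the vector $\mathbf{m}p^{s+1}/p^{\ell}$ has integer entries so that $\{(\mathbf{a}+\mathbf{m}p^{s+1})/p^{\ell}\} = \{\mathbf{a}/p^{\ell}\}$, I get
$$
v_p(\mathcal{Q}(\mathbf{a}+\mathbf{m}p^{s+1})) = \sum_{\ell=1}^{s+1}\Delta\!\left(\left\{\frac{\mathbf{a}}{p^{\ell}}\right\}\right) + \eta_{s+1}(\mathbf{a},\mathbf{m}).
$$
Subtracting $v_p(g(\mathbf{m})) = \mu(\mathbf{m})$ and invoking \eqref{eq du lemme 20} then gives the announced lower bound.

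The argument is essentially an arithmetic manipulation with fractional parts; no serious obstacle is expected. The only point that requires care is the componentwise comparison above, since the jump from $\{\mathbf{m}/p^{\ell-s-1}\}\in\mathcal{D}$ to $\{(\mathbf{a}+\mathbf{m}p^{s+1})/p^{\ell}\}\in\mathcal{D}$ uses the fact that $\mathcal{D}$ is defined via a \emph{monotone} condition (the existence of a nonnegative linear form $\mathbf{d}$ with $\mathbf{d}\cdot\mathbf{x}\geq 1$), so inflating coordinates keeps one inside $\mathcal{D}$. Once this is in place, the remaining steps are routine.
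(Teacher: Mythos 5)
Your proof is correct and follows essentially the same route as the paper: the same componentwise comparison of fractional parts, the same monotonicity observation that enlarging coordinates preserves membership in $\mathcal{D}$, the same use of $\Delta \geq \mathbf{1}_{\mathcal{D}}$ on $[0,1[^d$, and the same splitting of the valuation sum at $\ell = s+1$ together with the identity $\{(\mathbf{a}+\mathbf{m}p^{s+1})/p^{\ell}\} = \{\mathbf{a}/p^{\ell}\}$ for $\ell \leq s+1$. No gaps.
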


\begin{lemme}\label{lemme Q(a) g(a)}
Given $s\in\mathbb{N}$ and ${\mathbf a}\in\Psi_s(\mathcal{N})$, we have $v_p(Q({\mathbf a}))=\sum_{\ell=1}^{s}\Delta\left(\left\{\frac{{\mathbf a}}{p^{\ell}}\right\}\right)$.
\end{lemme}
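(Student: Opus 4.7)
The plan is to combine the $p$-adic valuation formula $v_p(\mathcal{Q}(\mathbf{n}))=\sum_{\ell\geq 1}\Delta(\{\mathbf{n}/p^{\ell}\})$ (derived at the beginning of Section~\ref{section demLandau}; the braces may be inserted since $|e|=|f|$ makes $\Delta$ periodic of period $1$ in each variable) with the explicit description $\Psi_s(\mathcal{N})=\{\mathbf{u}\in\{0,\dots,p^s-1\}^d:\{\mathbf{u}/p^s\}\notin\mathcal{D}\}$ from \eqref{idPsi}. Once these two ingredients are in hand, the lemma reduces to showing that $\Delta(\{\mathbf{a}/p^{\ell}\})$ vanishes for every $\ell\geq s+1$, so that only the first $s$ terms survive.

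To establish this tail vanishing, I would argue as follows. Since $\mathbf{a}\in\Psi_s(\mathcal{N})$, the description \eqref{idPsi} yields $\{\mathbf{a}/p^s\}=\mathbf{a}/p^s\notin\mathcal{D}$, i.e.\ $\mathbf{d}\cdot\mathbf{a}/p^s<1$ for every $\mathbf{d}\in\{\mathbf{e}_1,\dots,\mathbf{e}_{q_1},\mathbf{f}_1,\dots,\mathbf{f}_{q_2}\}$. For any $\ell\geq s+1$, the bound $\mathbf{a}\leq(p^s-1)\mathbf{1}<p^{\ell}\mathbf{1}$ gives $\{\mathbf{a}/p^{\ell}\}=\mathbf{a}/p^{\ell}\leq\mathbf{a}/p^s$ componentwise; pairing with the nonnegative vector $\mathbf{d}$ preserves the inequality, so $\mathbf{d}\cdot\{\mathbf{a}/p^{\ell}\}<1$ and hence $\lfloor\mathbf{d}\cdot\{\mathbf{a}/p^{\ell}\}\rfloor=0$ for every such $\mathbf{d}$. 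Summing over $e$ and $f$ yields $\Delta(\{\mathbf{a}/p^{\ell}\})=0$, as required.

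Substituting this into the full sum gives the claimed identity $v_p(\mathcal{Q}(\mathbf{a}))=\sum_{\ell=1}^{s}\Delta(\{\mathbf{a}/p^{\ell}\})$. I do not anticipate any real obstacle here: essentially the same computation appears already in the verification of assertion $(a_2)$ above, where the argument is run for $\mathbf{u}\in\Psi_s(\mathcal{N})$ in order to bound $v_p(\mathcal{Q}(\mathbf{u}+p^s\mathbf{m})/\mathcal{Q}(\mathbf{u}))$. The combinatorial content has been absorbed into Lemma~\ref{equiv cond} (and therefore into \eqref{idPsi}), and the present statement merely packages the elementary observation that once $\mathbf{a}/p^s$ lies below the threshold defining $\mathcal{D}$, every finer rescaling $\mathbf{a}/p^{\ell}$ with $\ell>s$ lies even further below it, so no new floor contributions arise.
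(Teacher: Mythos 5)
Your proof is correct and follows essentially the same route as the paper's: you invoke the valuation formula $v_p(\mathcal{Q}(\mathbf{a}))=\sum_{\ell\geq 1}\Delta(\{\mathbf{a}/p^{\ell}\})$, use the description \eqref{idPsi} of $\Psi_s(\mathcal{N})$ to get $\{\mathbf{a}/p^s\}\notin\mathcal{D}$, and then the monotonicity $\mathbf{L}\cdot\{\mathbf{a}/p^{\ell}\}\leq\mathbf{L}\cdot\{\mathbf{a}/p^s\}<1$ for $\ell\geq s+1$ to kill the tail of the sum. This matches the paper's argument step for step.
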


In order to prove \eqref{eq 3.13}, we will now distinguish two cases. 

\medskip
\begin{itemize}
\item \textit{Case} 1: Let us assume that there exists $j\in\{1,\dots,s+1\}$ such that 
\end{itemize}
\begin{equation}\label{ouhou}
\left\{\frac{{\mathbf v}+{\mathbf u}p}{p^j}\right\}\notin\mathcal{D}.
\end{equation}
Let $j_0$ be the smaller $j\in\{1,\dots,s+1\}$ verifying \eqref{ouhou}. According to Lemma \ref{lemme theta Q(a)} applied with $j_0$, we get $Y_s({\mathbf v},{\mathbf u},{\mathbf m})\in 1+p^{s-j_0+2}\mathbb{Z}_p$ and thus, following Lemma \ref{définition de Y}, $v_p(X_s({\mathbf v},{\mathbf u},{\mathbf m})-1)\geq s-j_0+2$. According to \eqref{cor valuation du quotient avec g}, we get 
\begin{align}
v_p\left((X_s({\mathbf v},{\mathbf u},{\mathbf m})-1)\frac{\mathcal{Q}({\mathbf v}+{\mathbf u}p+{\mathbf m}p^{s+1})}{g({\mathbf m})}\right)
&\geq v_p(X_s({\mathbf v},{\mathbf u},{\mathbf m})-1)+\sum_{\ell=1}^{s+1}\Delta\left(\left\{\frac{{\mathbf v}+{\mathbf u}p}{p^{\ell}}\right\}\right)\notag\\
&\geq s-j_0+2+\sum_{\ell=1}^{s+1}\Delta\left(\left\{\frac{{\mathbf v}+{\mathbf u}p}{p^{\ell}}\right\}\right).\label{intertheta}
\end{align}

For all $\ell\in\{1,\dots,j_0-1\}$, we have $\{({\mathbf v}+{\mathbf u}p)/p^{\ell}\}\in\mathcal{D}$ and so $\Delta(\{({\mathbf v}+{\mathbf u}p)/p^{\ell}\})\geq 1$. We get $\sum_{\ell=1}^{s+1}\Delta(\{({\mathbf v}+{\mathbf u}p)/p^{\ell}\})\geq j_0-1$ which, associated with \eqref{intertheta}, leads to
\begin{equation}\label{concl}
v_p\left((X_s({\mathbf v},{\mathbf u},{\mathbf m})-1)\frac{\mathcal{Q}({\mathbf v}+{\mathbf u}p+{\mathbf m}p^{s+1})}{g({\mathbf m})}\right)\geq s+1.
\end{equation}

If ${\mathbf v}+{\mathbf u}p\notin\Psi_{s+1}(\mathcal{N})$, then we have $\theta_s({\mathbf v}+{\mathbf u}p)=\mathcal{Q}({\mathbf v}+{\mathbf u}p)$ and $p^{s+1}\frac{\mathcal{Q}({\mathbf v}+{\mathbf u}p)}{\theta_s({\mathbf v}+{\mathbf u}p)}=p^{s+1}$. Hence, when ${\mathbf v}+{\mathbf u}p\notin\Psi_{s+1}(\mathcal{N})$, inequality \eqref{concl} implies \eqref{eq 3.13}.

We assume, throughout the end of the proof of Case $1$, that ${\mathbf v}+{\mathbf u}p\in\Psi_{s+1}(\mathcal{N})$, thus $\theta_s({\mathbf v}+{\mathbf u}p)=g({\mathbf v}+{\mathbf u}p)$. Let us prove that we have $v_p(g({\mathbf v}+{\mathbf u}p))\geq j_0-1$. Indeed, for all $\ell\in\{1,\dots,j_0-1\}$, we have $\{({\mathbf v}+{\mathbf u}p)/p^{\ell}\}\in\mathcal{D}$ and therefore 
$$
v_p(g({\mathbf v}+{\mathbf u}p))=\sum_{\ell=1}^{\infty}{\mathbf 1}_{\mathcal{D}}\left(\left\{\frac{{\mathbf v}+{\mathbf u}p}{p^{\ell}}\right\}\right)\geq j_0-1.
$$ 
Following \eqref{intertheta}, we get
\begin{align}
v_p\Bigg((X_s({\mathbf v},{\mathbf u},&{\mathbf m})-1)\frac{\mathcal{Q}({\mathbf v}+{\mathbf u}p+{\mathbf m}p^{s+1})}{g({\mathbf m})}\Bigg)\notag\\
&\geq s-j_0+2+v_p(g({\mathbf v}+{\mathbf u}p))+\left(\sum_{\ell=1}^{s+1}\Delta\left(\left\{\frac{{\mathbf v}+{\mathbf u}p}{p^{\ell}}\right\}\right)-v_p(g({\mathbf v}+{\mathbf u}p))\right)\notag\\
&\geq (s-j_0+2)+j_0-1+v_p\left(\frac{\mathcal{Q}({\mathbf v}+{\mathbf u}p)}{g({\mathbf v}+{\mathbf u}p)}\right)\label{explipli45}\\
&\geq s+1+v_p\left(\frac{\mathcal{Q}({\mathbf v}+{\mathbf u}p)}{g({\mathbf v}+{\mathbf u}p)}\right),\notag
\end{align}
where \eqref{explipli45} is valid because, applying Lemma \ref{lemme Q(a) g(a)} with $s+1$ instead of $s$ and ${\mathbf v}+{\mathbf u}p$ instead of ${\mathbf a}$, we get $v_p(\mathcal{Q}({\mathbf v}+{\mathbf u}p))=\sum_{\ell=1}^{s+1}\Delta(\{\frac{{\mathbf v}+{\mathbf u}p}{p^{\ell}}\})$. Thus we have \eqref{eq 3.13} in this case.
\medskip
\begin{itemize}
\item \textit{Case} 2: Let us assume that, for all $j\in\{1,\dots,s+1\}$, we have $\{({\mathbf v}+{\mathbf u}p)/p^{j}\}\in\mathcal{D}$.
\end{itemize} 
\medskip

In particular, we have ${\mathbf v}+{\mathbf u}p\notin\Psi_{s+1}(\mathcal{N})$ and thus $\theta_s({\mathbf v}+{\mathbf u}p)=\mathcal{Q}({\mathbf v}+{\mathbf u}p)$. Furthermore, we obtain $\sum_{\ell=1}^{s+1}\Delta(\{({\mathbf v}+{\mathbf u}p)/p^{\ell}\})\geq s+1$. 

If $v_p(Y_s({\mathbf v},{\mathbf u},{\mathbf m}))\geq 0$, then, following Lemma \ref{définition de Y}, $v_p(X_s({\mathbf v},{\mathbf u},{\mathbf m})-1)\geq 0$ and, according to \eqref{cor valuation du quotient avec g}, we have
\begin{align*}
v_p\left(\frac{\mathcal{Q}({\mathbf v}+{\mathbf u}p+{\mathbf m}p^{s+1})}{g({\mathbf m})}\right)&\geq \sum_{\ell=1}^{s+1}\Delta\left(\left\{\frac{{\mathbf v}+{\mathbf u}p}{p^{\ell}}\right\}\right)\geq s+1.
\end{align*}
thus we have \eqref{eq 3.13}.

Let us now assume that $v_p(Y_s({\mathbf v},{\mathbf u},{\mathbf m}))<0$. In this case, according to Lemma \ref{définition de Y}, we have
$$
v_p(X_s({\mathbf v},{\mathbf u},{\mathbf m})-1)=v_p(Y_s({\mathbf v},{\mathbf u},{\mathbf m}))\geq\eta_{s}({\mathbf u},{\mathbf m})-\eta_{s+1}({\mathbf v}+{\mathbf u}p,{\mathbf m}).
$$
Furthermore, 
\begin{align*}
v_p(\mathcal{Q}({\mathbf v}+{\mathbf u}p+{\mathbf m}p^{s+1}))
&=\sum_{\ell=1}^{\infty}\Delta\left(\left\{\frac{{\mathbf v}+{\mathbf u}p+{\mathbf m}p^{s+1}}{p^{\ell}}\right\}\right)\\
&=\sum_{\ell=1}^{s+1}\Delta\left(\left\{\frac{{\mathbf v}+{\mathbf u}p}{p^{\ell}}\right\}\right)+\sum_{\ell=s+2}^{\infty}\Delta\left(\left\{\frac{{\mathbf v}+{\mathbf u}p+{\mathbf m}p^{s+1}}{p^{\ell}}\right\}\right)\\
&=\sum_{\ell=1}^{s+1}\Delta\left(\left\{\frac{{\mathbf v}+{\mathbf u}p}{p^{\ell}}\right\}\right)+\eta_{s+1}({\mathbf v}+{\mathbf u}p,{\mathbf m}).
\end{align*}
Thereby, we get 
\begin{align*}
v_p\Bigg((X_s(&{\mathbf v},{\mathbf u},{\mathbf m})-1)\frac{\mathcal{Q}({\mathbf v}+{\mathbf u}p+{\mathbf m}p^{s+1})}{g({\mathbf m})}\Bigg)\\
&\geq\eta_{s}({\mathbf u},{\mathbf m})-\eta_{s+1}({\mathbf v}+{\mathbf u}p,{\mathbf m})+\sum_{\ell=1}^{s+1}\Delta\left(\left\{\frac{{\mathbf v}+{\mathbf u}p}{p^{\ell}}\right\}\right)+\eta_{s+1}({\mathbf v}+{\mathbf u}p,{\mathbf m})-\mu({\mathbf m})\\
&\geq s+1+\eta_{s}({\mathbf u},{\mathbf m})-\mu({\mathbf m}).
\end{align*}

If $s=0$, then we have ${\mathbf u}={\mathbf 0}$ and $\eta_{0}({\mathbf 0},{\mathbf m})=\sum_{\ell=1}^{\infty}\Delta(\{\frac{{\mathbf m}}{p^{\ell}}\})\geq\sum_{\ell=1}^{\infty}{\mathbf 1}_{\mathcal{D}}(\{\frac{{\mathbf m}}{p^{\ell}}\})=\mu({\mathbf m})$ and we have \eqref{eq 3.13}. On the other hand, if $s\geq 1$ then, applying Lemma \ref{valuation du quotient avec g} with $s-1$ instead of $s$ and ${\mathbf a}={\mathbf u}$, we get $\eta_{s}({\mathbf u},{\mathbf m})\geq\mu({\mathbf m})$, which implies \eqref{eq 3.13}. This finishes the proof of equation \eqref{Hypothèse (iii)} modulo those of the various lemmas.

\subsubsection{Proof of Lemmas \ref{définition de Y}, \ref{lemme theta Q(a)}, \ref{valuation du quotient avec g} and \ref{lemme Q(a) g(a)}}\label{démo lemme Y}

\begin{proof}[Proof of Lemma \ref{définition de Y}]
We have to prove that
$X_s({\mathbf v},{\mathbf u},{\mathbf m})\in Y_s({\mathbf v},{\mathbf u},{\mathbf m})(1+p^{s+1}\mathbb{Z}_p)$.

We have 
$$
X_s({\mathbf v},{\mathbf u},{\mathbf m})=\frac{\mathcal{Q}({\mathbf v}+{\mathbf u}p)}{\mathcal{Q}({\mathbf u}p)}\frac{\mathcal{Q}({\mathbf u}p+{\mathbf m}p^{s+1})}{\mathcal{Q}({\mathbf v}+{\mathbf u}p+{\mathbf m}p^{s+1})}\cdot\frac{\mathcal{Q}({\mathbf u}p)}{\mathcal{Q}({\mathbf u})}\frac{\mathcal{Q}({\mathbf u}+{\mathbf m}p^{s})}{\mathcal{Q}({\mathbf u}p+{\mathbf m}p^{s+1})}.
$$
Applying Lemma \ref{lemme 16} with ${\mathbf c}={\mathbf u}$ we obtain
$$
\frac{\mathcal{Q}({\mathbf u}p)}{\mathcal{Q}({\mathbf u})}\frac{\mathcal{Q}({\mathbf u}+{\mathbf m}p^{s})}{\mathcal{Q}({\mathbf u}p+{\mathbf m}p^{s+1})}\in 1+p^{s+1}\mathbb{Z}_p,
$$
so that 
\begin{equation}\label{pour valuation X}
X_s({\mathbf v},{\mathbf u},{\mathbf m})\in\frac{\mathcal{Q}({\mathbf v}+{\mathbf u}p)}{\mathcal{Q}({\mathbf u}p)}\frac{\mathcal{Q}({\mathbf u}p+{\mathbf m}p^{s+1})}{\mathcal{Q}({\mathbf v}+{\mathbf u}p+{\mathbf m}p^{s+1})}(1+p^{s+1}\mathbb{Z}_p).
\end{equation}
Furthermore, we have
\begin{align*}
\frac{\mathcal{Q}({\mathbf v}+{\mathbf u}p)}{\mathcal{Q}({\mathbf u}p)}\cdot &\frac{\mathcal{Q}({\mathbf u}p+{\mathbf m}p^{s+1})}{\mathcal{Q}({\mathbf v}+{\mathbf u}p+{\mathbf m}p^{s+1})}\\
&=\frac{\Big(\prod_{i=1}^{q_1}\prod_{k=1}^{{\mathbf e}_i\cdot{\mathbf v}}({\mathbf e}_i\cdot{\mathbf u}p+k)\Big)\left(\prod_{i=1}^{q_2}\prod_{k=1}^{{\mathbf f}_i\cdot{\mathbf v}}({\mathbf f}_i\cdot({\mathbf u}p+{\mathbf m}p^{s+1})+k)\right)}{\left(\prod_{i=1}^{q_2}\prod_{k=1}^{{\mathbf f}_i\cdot{\mathbf v}}({\mathbf f}_i\cdot{\mathbf u}p+k)\right)\Big(\prod_{i=1}^{q_1}\prod_{k=1}^{{\mathbf e}_i\cdot{\mathbf v}}({\mathbf e}_i\cdot({\mathbf u}p+{\mathbf m}p^{s+1})+k)\Big)}\\
&=\frac{\prod_{i=1}^{q_2}\prod_{k=1}^{{\mathbf f}_i\cdot{\mathbf v}}\left(1+\frac{{\mathbf f}_i\cdot{\mathbf m}p^{s+1}}{{\mathbf f}_i\cdot{\mathbf u}p+k}\right)}{\prod_{i=1}^{q_1}\prod_{k=1}^{{\mathbf e}_i\cdot{\mathbf v}}\left(1+\frac{{\mathbf e}_i\cdot{\mathbf m}p^{s+1}}{{\mathbf e}_i\cdot{\mathbf u}p+k}\right)}.
\end{align*}

If ${\mathbf d}\in\{{\mathbf e}_1,\dots,{\mathbf e}_{q_1},{\mathbf f}_1,\dots,{\mathbf f}_{q_2}\}$ and $k\in\{1,\dots,{\mathbf d}\cdot{\mathbf v}\}$, then $p$ divides ${\mathbf d}\cdot{\mathbf u}p+k$ if and only if there exists $j\in\{1,\dots,\lfloor {\mathbf d}\cdot{\mathbf v}/p\rfloor\}$ such that $k=jp$. Thus we have  
$$
\prod_{k=1}^{{\mathbf d}\cdot{\mathbf v}}\left(1+\frac{{\mathbf d}\cdot{\mathbf m}p^{s+1}}{{\mathbf d}\cdot{\mathbf u}p+k}\right)=\prod_{j=1}^{\lfloor {\mathbf d}\cdot{\mathbf v}/p\rfloor}\left(1+\frac{{\mathbf d}\cdot{\mathbf m}p^{s}}{{\mathbf d}\cdot{\mathbf u}+j}\right)(1+O(p^{s+1})).
$$
Therefore
\begin{align*}
\frac{\mathcal{Q}({\mathbf v}+{\mathbf u}p)}{\mathcal{Q}({\mathbf u}p)}\cdot\frac{\mathcal{Q}({\mathbf u}p+{\mathbf m}p^{s+1})}{\mathcal{Q}({\mathbf v}+{\mathbf u}p+{\mathbf m}p^{s+1})}
&=\frac{\prod_{i=1}^{q_2}\prod_{j=1}^{\lfloor {\mathbf f}_i\cdot{\mathbf v}/p\rfloor}\left(1+\frac{{\mathbf f}_i\cdot{\mathbf m}p^{s}}{{\mathbf f}_i\cdot{\mathbf u}+j}\right)}{\prod_{i=1}^{q_1}\prod_{j=1}^{\lfloor {\mathbf e}_i\cdot{\mathbf v}/p\rfloor}\left(1+\frac{{\mathbf e}_i\cdot{\mathbf m}p^{s}}{{\mathbf e}_i\cdot{\mathbf u}+j}\right)}(1+O(p^{s+1}))\\
&=Y_s({\mathbf v},{\mathbf u},{\mathbf m})(1+O(p^{s+1}))
\end{align*}
and so $X_s({\mathbf v},{\mathbf u},{\mathbf m})\in Y_s({\mathbf v},{\mathbf u},{\mathbf m})(1+p^{s+1}\mathbb{Z}_p)$, as expected.
\medskip

We will now prove that we also have 
$$
v_p(Y_s({\mathbf v},{\mathbf u},{\mathbf m}))\geq\eta_{s}({\mathbf u},{\mathbf m})-\eta_{s+1}({\mathbf v}+{\mathbf u}p,{\mathbf m}).
$$ 
We have seen above that $v_p(Y_s({\mathbf v},{\mathbf u},{\mathbf m}))=v_p(X_s({\mathbf v},{\mathbf u},{\mathbf m}))$. Furthermore, according to \eqref{pour valuation X}, we also have 
\begin{align*}
v_p(X_s({\mathbf v},{\mathbf u},{\mathbf m}))
&=v_p\left(\frac{\mathcal{Q}({\mathbf v}+{\mathbf u}p)}{\mathcal{Q}({\mathbf u}p)}\cdot\frac{\mathcal{Q}({\mathbf u}p+{\mathbf m}p^{s+1})}{\mathcal{Q}({\mathbf v}+{\mathbf u}p+{\mathbf m}p^{s+1})}\right)\\
&=v_p(\mathcal{Q}({\mathbf v}+{\mathbf u}p))-v_p(\mathcal{Q}({\mathbf u}p))+v_p(\mathcal{Q}({\mathbf u}p+{\mathbf m}p^{s+1}))\\
&-v_p(\mathcal{Q}({\mathbf v}+{\mathbf u}p+{\mathbf m}p^{s+1}))\\
&=\sum_{\ell=1}^{\infty}\Delta\left(\left\{\frac{{\mathbf v}+{\mathbf u}p}{p^{\ell}}\right\}\right)-\sum_{\ell=1}^{\infty}\Delta\left(\left\{\frac{{\mathbf u}p}{p^{\ell}}\right\}\right)+\sum_{\ell=1}^{\infty}\Delta\left(\left\{\frac{{\mathbf u}p+{\mathbf m}p^{s+1})}{p^{\ell}}\right\}\right)\\
&-\sum_{\ell=1}^{\infty}\Delta\left(\left\{\frac{{\mathbf v}+{\mathbf u}p+{\mathbf m}p^{s+1}}{p^{\ell}}\right\}\right).
\end{align*}
We have
\begin{multline*}
\sum_{\ell=1}^{\infty}\Delta\left(\left\{\frac{{\mathbf v}+{\mathbf u}p}{p^{\ell}}\right\}\right)-\sum_{\ell=1}^{\infty}\Delta\left(\left\{\frac{{\mathbf v}+{\mathbf u}p+{\mathbf m}p^{s+1}}{p^{\ell}}\right\}\right)\\
=\sum_{\ell=1}^{\infty}\Delta\left(\left\{\frac{{\mathbf v}+{\mathbf u}p}{p^{\ell}}\right\}\right)-\sum_{\ell=1}^{s+1}\Delta\left(\left\{\frac{{\mathbf v}+{\mathbf u}p}{p^{\ell}}\right\}\right)-\sum_{\ell=s+2}^{\infty}\Delta\left(\left\{\frac{{\mathbf v}+{\mathbf u}p+{\mathbf m}p^{s+1}}{p^{\ell}}\right\}\right)\\
=\sum_{\ell=s+2}^{\infty}\Delta\left(\left\{\frac{{\mathbf v}+{\mathbf u}p}{p^{\ell}}\right\}\right)-\sum_{\ell=s+2}^{\infty}\Delta\left(\left\{\frac{{\mathbf v}+{\mathbf u}p+{\mathbf m}p^{s+1}}{p^{\ell}}\right\}\right)\\
=\eta_{s+1}({\mathbf v}+{\mathbf u}p,{\mathbf 0})-\eta_{s+1}({\mathbf v}+{\mathbf u}p,{\mathbf m}),
\end{multline*}
and
\begin{align*}
\sum_{\ell=1}^{\infty}
\Delta\left(\left\{\frac{{\mathbf u}p}{p^{\ell}}\right\}\right)-&\sum_{\ell=1}^{\infty}\Delta\left(\left\{\frac{{\mathbf u}p+{\mathbf m}p^{s+1}}{p^{\ell}}\right\}\right)\\
&=\sum_{\ell=s+2}^{\infty}\Delta\left(\left\{\frac{{\mathbf u}p}{p^{\ell}}\right\}\right)-\sum_{\ell=s+2}^{\infty}\Delta\left(\left\{\frac{{\mathbf u}p+{\mathbf m}p^{s+1}}{p^{\ell}}\right\}\right)\\
&=\sum_{\ell=s+1}^{\infty}\Delta\left(\left\{\frac{{\mathbf u}}{p^{\ell}}\right\}\right)-\sum_{\ell=s+1}^{\infty}\Delta\left(\left\{\frac{{\mathbf u}+{\mathbf m}p^{s}}{p^{\ell}}\right\}\right)=\eta_{s}({\mathbf u},{\mathbf 0})-\eta_{s}({\mathbf u},{\mathbf m}).
\end{align*}
Thus, $v_p(Y_s({\mathbf v},{\mathbf u},{\mathbf m}))=\eta_{s+1}({\mathbf v}+{\mathbf u}p,{\mathbf 0})-\eta_{s}({\mathbf u},{\mathbf 0})+\eta_{s}({\mathbf u},{\mathbf m})-\eta_{s+1}({\mathbf v}+{\mathbf u}p,{\mathbf m})$. We now have to prove that if ${\mathbf u}\in\Psi_s(\mathcal{N})$, then we have $\eta_{s+1}({\mathbf v}+{\mathbf u}p,{\mathbf 0})-\eta_s({\mathbf u},{\mathbf 0})\geq 0$. As ${\mathbf u}\in\Psi_s(\mathcal{N})$, we have $\{{\mathbf u}/p^s\}\notin\mathcal{D}$. Hence, for all $\ell\geq s+1$ and all ${\mathbf L}\in\{{\mathbf e}_1,\dots,{\mathbf e}_{q_1},{\mathbf f}_1,\dots,{\mathbf f}_{q_2}\}$, we obtain
$$
{\mathbf L}\cdot\left\{\frac{{\mathbf u}}{p^{\ell}}\right\}={\mathbf L}\cdot\frac{{\mathbf u}}{p^{\ell}}\leq{\mathbf L}\cdot\frac{{\mathbf u}}{p^{s}}={\mathbf L}\cdot\left\{\frac{{\mathbf u}}{p^{s}}\right\}<1,
$$
\textit{i.e.}, for all $\ell\geq s+1$, $\{{\mathbf u}/p^{\ell}\}\notin\mathcal{D}$. Then we have $\eta_s({\mathbf u},{\mathbf 0})=\sum_{\ell=s+1}^{\infty}\Delta\left(\left\{\frac{{\mathbf u}}{p^{\ell}}\right\}\right)=0$
and 
$$
\eta_{s+1}({\mathbf v}+{\mathbf u}p,{\mathbf 0})-\eta_s({\mathbf u},{\mathbf 0})=\eta_{s+1}({\mathbf v}+{\mathbf u}p,{\mathbf 0})\geq 0,
$$
which completes the proof of Lemma \ref{définition de Y}. 
\end{proof}

\begin{proof}[Proof of Lemma \ref{lemme theta Q(a)}]
Given $s\in\mathbb{N}$, ${\mathbf v}\in\{0,\dots,p-1\}^d$ and ${\mathbf u}\in\{0,\dots,p^s-1\}^d$, we write ${\mathbf u}=\sum_{k=0}^{\infty}{\mathbf u}_kp^k$, where ${\mathbf u}_k\in\{0,\dots,p-1\}^d$. Given ${\mathbf L}\in\{{\mathbf e}_1,\dots,{\mathbf e}_{q_1},{\mathbf f}_1,\dots,{\mathbf f}_{q_2}\}$, we define $s+1$ non-negative integers by the formulas $b_{{\mathbf L},0}:=\lfloor {\mathbf L}\cdot{\mathbf v}/p\rfloor$ and $b_{{\mathbf L},k+1}:=\lfloor({\mathbf L}\cdot{\mathbf u}_k+b_{{\mathbf L},k})/p\rfloor$ for $k\in\{0,\dots,s-1\}$. For all $x\in\mathbb{R}$, we write $\lceil x\rceil$ the smallest integer greater than $x$ and we define $s+1$ non-negative integers by the formulas $a_{{\mathbf L},0}:=1$ and $a_{{\mathbf L},k+1}:=\lceil({\mathbf L}\cdot{\mathbf u}_k+a_{{\mathbf L},k})/p\rceil$. First, we will prove by induction on $r$ that assertion $\mathcal{A}_{r}$: 
$$
\prod_{n=1}^{\lfloor {\mathbf L}\cdot{\mathbf v}/p\rfloor}\left(1+\frac{{\mathbf L}\cdot{\mathbf m}p^{s}}{{\mathbf L}\cdot{\mathbf u}+n}\right)=\prod_{n=a_{{\mathbf L},r}}^{b_{{\mathbf L},r}}\left(1+\frac{{\mathbf L}\cdot{\mathbf m}p^{s-r}}{{\mathbf L}\cdot(\sum_{k=r}^{\infty}{\mathbf u}_k p^{k-r})+n}\right)\left(1+O(p^{s-r+1})\right)
$$
is true for all $r\in\{0,\dots,s\}$.

We have $b_{{\mathbf L},0}=\lfloor {\mathbf L}\cdot{\mathbf v}/p\rfloor$ and $a_{{\mathbf L},0}=1$, thus $\mathcal{A}_{0}$ is true.

Given $r\geq 0$, let us assume that $\mathcal{A}_{r}$ is true and prove $\mathcal{A}_{r+1}$. If $a_{\mathbf{L},r}>b_{\mathbf{L},r}$ then $a_{\mathbf{L},r+1}>b_{\mathbf{L},r+1}$ and $\mathcal{A}_r$ implies $\mathcal{A}_{r+1}$. Thus we can assume that $a_{\mathbf{L},r}\leq b_{\mathbf{L},r}$. If $n\in\{a_{{\mathbf L},r},\dots,b_{{\mathbf L},r}\}$, then $p$ divides ${\mathbf L}\cdot\left(\sum_{k=r}^{\infty}{\mathbf u}_kp^{k-r}\right)+n$ if and only if $p$ divides ${\mathbf L}\cdot{\mathbf u}_r+n$, \textit{i.e.} if and only if an $i\in\{\lceil({\mathbf L}\cdot{\mathbf u}_r+a_{{\mathbf L},r})/p\rceil,\dots,\lfloor({\mathbf L}\cdot{\mathbf u}_r+b_{{\mathbf L},r})/p\rfloor\}$ exists such that ${\mathbf L}\cdot{\mathbf u}_r+n=ip$. So we get
\begin{multline}\label{Alr}
\prod_{n=a_{{\mathbf L},r}}^{b_{{\mathbf L},r}}\left(1+\frac{{\mathbf L}\cdot{\mathbf m}p^{s-r}}{{\mathbf L}\cdot\left(\sum_{k=r}^{\infty}{\mathbf u}_kp^{k-r}\right)+n}\right)\\
=\prod_{i=a_{{\mathbf L},r+1}}^{b_{{\mathbf L},r+1}}\left(1+\frac{{\mathbf L}\cdot{\mathbf m}p^{s-r}}{{\mathbf L}\cdot\left(\sum_{k=r+1}^{\infty}{\mathbf u}_kp^{k-r}\right)+ip}\right)(1+O(p^{s-r}))\\
=\prod_{i=a_{{\mathbf L},r+1}}^{b_{{\mathbf L},r+1}}\left(1+\frac{{\mathbf L}\cdot{\mathbf m}p^{s-r-1}}{{\mathbf L}\cdot\left(\sum_{k=r+1}^{\infty}{\mathbf u}_kp^{k-r-1}\right)+i}\right)(1+O(p^{s-r})).
\end{multline}
According to $\mathcal{A}_{r}$ and \eqref{Alr}, we have $\mathcal{A}_{r+1}$, which finishes the induction on $r$.

Given ${\mathbf L}\in\{{\mathbf e}_1,\dots,{\mathbf e}_{q_1},{\mathbf f}_1,\dots,{\mathbf f}_{q_2}\}$, we will prove by induction on $k$ that assertion $\mathcal{B}_{k}$ : $a_{{\mathbf L},k}\geq 1$ and $b_{{\mathbf L},k}\leq \lfloor {\mathbf L}\cdot\{({\mathbf v}+{\mathbf u}p)/p^{k+1}\}\rfloor$ is true for all $k\in\{0,\dots,s\}$. 

We have $a_{{\mathbf L},0}=1$ and $b_{{\mathbf L},0}=\lfloor {\mathbf L}\cdot{\mathbf v}/p\rfloor=\lfloor {\mathbf L}\cdot\{({\mathbf v}+{\mathbf u}p)/p\}\rfloor$, so $\mathcal{B}_0$ is true.

Given $k\geq 0$, let us assume that $\mathcal{B}_k$ is true and let us prove $\mathcal{B}_{k+1}$. We have $a_{{\mathbf L},k+1}=\lceil({\mathbf L}\cdot{\mathbf u}_k+a_{{\mathbf L},k})/p\rceil$ and $b_{{\mathbf L},k+1}=\left\lfloor({\mathbf L}\cdot{\mathbf u}_k+b_{{\mathbf L},k})/p\right\rfloor$, thus $a_{{\mathbf L},k+1}\geq\lceil({\mathbf L}\cdot{\mathbf u}_k+1)/p\rceil\geq 1$ and
\begin{align*}
b_{{\mathbf L},k+1}\leq\left\lfloor\frac{{\mathbf L}\cdot{\mathbf u}_k}{p}+\frac{{\mathbf L}}{p}\cdot\left\{\frac{{\mathbf v}+{\mathbf u}p}{p^{k+1}}\right\}\right\rfloor&=\left\lfloor {\mathbf L}\cdot\left(\frac{{\mathbf u}_kp^{k+1}}{p^{k+2}}+\frac{{\mathbf v}+p\sum_{i=0}^{k-1}{\mathbf u}_ip^i}{p^{k+2}}\right)\right\rfloor\\
&=\left\lfloor {\mathbf L}\cdot\left\{\frac{{\mathbf v}+{\mathbf u}p}{p^{k+2}}\right\}\right\rfloor,
\end{align*}
which completes the induction on $k$.

Given $j\in\{1,\dots,s+1\}$ such that $\{({\mathbf v}+{\mathbf u}p)/p^j\}\notin\mathcal{D}$, for all ${\mathbf L}\in\{{\mathbf e}_1,\dots,{\mathbf e}_{q_1},{\mathbf f}_1,\dots,{\mathbf f}_{q_2}\}$, we obtain, \textit{via} $\mathcal{B}_{j-1}$, that $a_{{\mathbf L},j-1}\geq 1$ and $b_{{\mathbf L},j-1}\leq\lfloor {\mathbf L}\cdot\{({\mathbf v}+{\mathbf u}p)/p^{j}\}\rfloor=0$. Hence, following $\mathcal{A}_{j-1}$, we get 
$$
\prod_{n=1}^{\lfloor {\mathbf L}\cdot{\mathbf v}/p\rfloor}\left(1+\frac{{\mathbf L}\cdot{\mathbf m}p^{s}}{{\mathbf L}\cdot{\mathbf u}+n}\right)=1+O(p^{s-j+2})
$$
and thus
$$
Y_s({\mathbf v},{\mathbf u},{\mathbf m})=\frac{\prod_{i=1}^{q_2}\prod_{n=1}^{\lfloor {\mathbf f}_i\cdot{\mathbf v}/p\rfloor}\left(1+\frac{{\mathbf f}_i\cdot{\mathbf m}p^{s}}{{\mathbf f}_i\cdot{\mathbf u}+n}\right)}{\prod_{i=1}^{q_1}\prod_{n=1}^{\lfloor {\mathbf e}_i\cdot{\mathbf v}/p\rfloor}\left(1+\frac{{\mathbf e}_i\cdot{\mathbf m}p^{s}}{{\mathbf e}_i\cdot{\mathbf u}+n}\right)}=\frac{1+O(p^{s-j+2})}{1+O(p^{s-j+2})}=1+O(p^{s-j+2}), 
$$
which finishes the proof of Lemma \ref{lemme theta Q(a)}.
\end{proof}

\begin{proof}[Proof of Lemma \ref{valuation du quotient avec g}]
First, we will prove that we have \eqref{eq du lemme 20}. Let us write ${\mathbf m}=\sum_{k=0}^q{\mathbf m}_kp^k$, where ${\mathbf m}_k\in\{0,\dots,p-1\}^d$. We have
\begin{align*}
\eta_{s+1}({\mathbf a},{\mathbf m})-\mu({\mathbf m})
&=\sum_{\ell=s+2}^{\infty}\Delta\left(\left\{\frac{{\mathbf a}+{\mathbf m}p^{s+1}}{p^{\ell}}\right\}\right)-\sum_{\ell=1}^{\infty}\textsc{1}_{\mathcal{D}}\left(\left\{\frac{{\mathbf m}}{p^{\ell}}\right\}\right)\\
&=\sum_{\ell=s+2}^{\infty}\left(\Delta\left(\left\{\frac{{\mathbf a}+{\mathbf m}p^{s+1}}{p^{\ell}}\right\}\right)-\textsc{1}_{\mathcal{D}}\left(\left\{\frac{{\mathbf m}p^{s+1}}{p^{\ell}}\right\}\right)\right)\\
&=\sum_{\ell=s+2}^{\infty}\left(\Delta\left(\frac{{\mathbf a}+\sum_{k=0}^{\ell-s-2}{\mathbf m}_kp^{k+s+1}}{p^{\ell}}\right)-\textsc{1}_{\mathcal{D}}\left(\frac{\sum_{k=0}^{\ell-s-2}{\mathbf m}_kp^{k+s+1}}{p^{\ell}}\right)\right).
\end{align*}
Furthermore, for all $\ell\geq s+2$, we have
$$
{\mathbf 0}\leq \frac{\sum_{k=0}^{\ell-s-2}{\mathbf m}_kp^{k+s+1}}{p^{\ell}}\leq \frac{{\mathbf a}+\sum_{k=0}^{\ell-s-2}{\mathbf m}_kp^{k+s+1}}{p^{\ell}}\leq\frac{(p^{\ell}-1){\mathbf 1}}{p^{\ell}}\in[0,1[^d.
$$
Thus
\begin{align*}
\textsc{1}_{\mathcal{D}}\left(\frac{\sum_{k=0}^{\ell-s-2}{\mathbf m}_kp^{k+s+1}}{p^{\ell}}\right)=1\quad
&\Longrightarrow\quad\frac{\sum_{k=0}^{\ell-s-2}{\mathbf m}_kp^{k+s+1}}{p^{\ell}}\in\mathcal{D}\\
&\Longrightarrow\quad\frac{{\mathbf a}+\sum_{k=0}^{\ell-s-2}{\mathbf m}_kp^{k+s+1}}{p^{\ell}}\in\mathcal{D}\\
&\Longrightarrow\quad \Delta\left(\frac{{\mathbf a}+\sum_{k=0}^{\ell-s-2}{\mathbf m}_kp^{k+s+1}}{p^{\ell}}\right)\geq 1
\end{align*}
and so $\eta_{s+1}({\mathbf a},{\mathbf m})-\mu({\mathbf m})\geq 0$. This completes the proof of \eqref{eq du lemme 20}.

Let us now prove \eqref{cor valuation du quotient avec g}. We have 
\begin{align}
v_p\left(\frac{\mathcal{Q}({\mathbf a}+{\mathbf m}p^{s+1})}{g_p({\mathbf m})}\right)
&=\sum_{\ell=1}^{\infty}\Delta\left(\left\{\frac{{\mathbf a}+{\mathbf m}p^{s+1}}{p^{\ell}}\right\}\right)-\mu({\mathbf m})\notag\\
&=\sum_{\ell=1}^{s+1}\Delta\left(\left\{\frac{{\mathbf a}}{p^{\ell}}\right\}\right)+\sum_{\ell=s+2}^{\infty}\Delta\left(\left\{\frac{{\mathbf a}+{\mathbf m}p^{s+1}}{p^{\ell}}\right\}\right)-\mu({\mathbf m})\notag\\
&=\sum_{\ell=1}^{s+1}\Delta\left(\left\{\frac{{\mathbf a}}{p^{\ell}}\right\}\right)+\eta_{s+1}({\mathbf a},{\mathbf m})-\mu({\mathbf m}),\notag\\
&\geq \sum_{\ell=1}^{s+1}\Delta\left(\left\{\frac{{\mathbf a}}{p^{\ell}}\right\}\right)\label{expli expli 2}.
\end{align}
where we used inequality \eqref{eq du lemme 20} for \eqref{expli expli 2}.
\end{proof}

\begin{proof}[Proof of Lemma \ref{lemme Q(a) g(a)}]
We have $ v_p(Q({\mathbf a}))=\sum_{\ell=1}^{\infty}\Delta\left(\left\{\frac{{\mathbf a}}{p^{\ell}}\right\}\right)$. As ${\mathbf a}\in\Psi_s(\mathcal{N})$, we have $\{{\mathbf a}/p^s\}\notin\mathcal{D}$ and, for all $\ell\geq s+1$ and all ${\mathbf L}\in\{{\mathbf e}_1,\dots,{\mathbf e}_{q_1},{\mathbf f}_1,\dots,{\mathbf f}_{q_2}\}$, we get 
$$
{\mathbf L}\cdot\left\{\frac{{\mathbf a}}{p^{\ell}}\right\}={\mathbf L}\cdot\frac{{\mathbf a}}{p^{\ell}}\leq{\mathbf L}\cdot\frac{{\mathbf a}}{p^{s}}={\mathbf L}\cdot\left\{\frac{{\mathbf a}}{p^s}\right\}<1,
$$
\textit{i.e.} $\{{\mathbf a}/p^{\ell}\}\notin\mathcal{D}$. Thus, for all $\ell\geq s+1$, we have $\Delta\left(\left\{{\mathbf a}/p^{\ell}\right\}\right)=0$. This gives us the expected result.
\end{proof}

\section{Proof of assertions $(ii)$ of Theorems \ref{critère} and \ref{critère2}}\label{Proof(ii)768}

We assume the hypothesis of Theorems \ref{critère} and \ref{critère2}. Furthermore, we assume that ${\mathbf x}_0\in\mathcal{D}_{e,f}$ is a zero of $\Delta_{e,f}$. In Section \ref{Préliminaires}, we prove an elementary result of analysis which we will use for the proofs of assertions $(ii)$ of Theorems \ref{critère} and \ref{critère2}. We prove assertion $(ii)$ of Theorem \ref{critère} in Section \ref{reci1}. We will use certain results from Section \ref{reci1} for the proof of assertion $(ii)$ of Theorem \ref{critère2} which we present in Section \ref{reci2}.

\subsection{Preliminary}\label{Préliminaires}

The aim of this section is to prove that there exists a nonempty open subset $\mathcal{U}$ of $\mathcal{D}_{e,f}$ such that, for all ${\mathbf x}\in\mathcal{U}$, $i\in\{1,\dots,q_1\}$ and $j\in\{1,\dots,q_2\}$, we have $\lfloor{\mathbf e}_i\cdot{\mathbf x}\rfloor=\lfloor{\mathbf e}_i\cdot{\mathbf x}_0\rfloor$, ${\mathbf e}_i\cdot{\mathbf x}\neq 0$, $\lfloor{\mathbf f}_j\cdot{\mathbf x}\rfloor=\lfloor{\mathbf f}_j\cdot{\mathbf x}_0\rfloor$, ${\mathbf f}_j\cdot{\mathbf x}\neq 0$ and ${\mathbf e}_i\cdot{\mathbf x}\neq{\mathbf f}_j\cdot{\mathbf x}$. 

Particularly, for all ${\mathbf x}\in\mathcal{U}$, we would have $\Delta_{e,f}({\mathbf x})=\Delta_{e,f}({\mathbf x}_0)=0$. We will use this open set $\mathcal{U}$ throughout the rest of the proof.
\medskip

Applying Lemma \ref{localconst} with, instead of $u$, the sequence constituted by the elements of $e$ and $f$, we obtain that there exists $\mu>0$ such that, for all ${\mathbf x}\in[0,\mu]^d$ and all ${\mathbf L}\in\{{\mathbf e}_1,\dots,{\mathbf e}_{q_1},{\mathbf f}_1,\dots{\mathbf f}_{q_2}\}$, we have $\lfloor{\mathbf L}\cdot({\mathbf x}_0+{\mathbf x})\rfloor=\lfloor{\mathbf L}\cdot{\mathbf x}_0\rfloor$. As ${\mathbf x}_0\in[0,1[^d$, there exists $\mu_1>0$, $\mu_1\leq\mu$, such that, for all ${\mathbf x}\in[0,\mu_1]^d$, we have ${\mathbf x}_0+{\mathbf x}\in[0,1[^d$. Since ${\mathbf x}_0\in\mathcal{D}_{e,f}$, a ${\mathbf L}\in\{{\mathbf e}_1,\dots,{\mathbf e}_{q_1},{\mathbf f}_1,\dots,{\mathbf f}_{q_2}\}$ exists such that ${\mathbf L}\cdot{\mathbf x}_0\geq 1$, which gives us the result that, for all ${\mathbf x}\in[0,\mu_1]^d$, we have ${\mathbf L}\cdot({\mathbf x}_0+{\mathbf x})\geq{\mathbf L}\cdot{\mathbf x}_0\geq 1$ and thus, as ${\mathbf x}_0+{\mathbf x}\in[0,1[^d$, we get that ${\mathbf x}_0+{\mathbf x}\in\mathcal{D}_{e,f}$. Thereby, there exists a nonempty open subset $\mathcal{U}_1$ of $\mathcal{D}_{e,f}$ such that, for all ${\mathbf x}\in\mathcal{U}_1$ and ${\mathbf L}\in\{{\mathbf e}_1,\dots,{\mathbf e}_{q_1},{\mathbf f}_1,\dots,{\mathbf f}_{q_2}\}$, we have $\lfloor{\mathbf L}\cdot{\mathbf x}\rfloor=\lfloor{\mathbf L}\cdot{\mathbf x}_0\rfloor$.
\medskip

For all $i\in\{1,\dots,q_1\}$ and $j\in\{1,\dots,q_2\}$, we define the sets $\mathcal{H}_{{\mathbf e}_i}:=\{{\mathbf x}\in\mathbb{R}^d\,:\,{\mathbf e}_i\cdot{\mathbf x}=0\}$, $\mathcal{H}_{{\mathbf f}_j}:=\{{\mathbf x}\in\mathbb{R}^d\,:\,{\mathbf f}_j\cdot{\mathbf x}=0\}$ and $\mathcal{H}_{{\mathbf e}_i,{\mathbf f}_j}:=\{{\mathbf x}\in\mathbb{R}^d\,:\,{\mathbf e}_i\cdot{\mathbf x}={\mathbf f}_j\cdot{\mathbf x}\}$. Since $e$ and $f$ are two disjoint sequences constituted by nonzero vectors, we obtain that the $\mathcal{H}_{{\mathbf e}_i}$, $\mathcal{H}_{{\mathbf f}_j}$ and $\mathcal{H}_{{\mathbf e}_i,{\mathbf f}_j}$ are hyperplanes in $\mathbb{R}^d$ and are therefore closed subsets of $\mathbb{R}^d$ with empty interiors. Therefore, their complements are dense open subsets of $\mathbb{R}^d$ and the complement $\mathcal{U}_2$ of the union of $\mathcal{H}_{{\mathbf e}_i}$, $\mathcal{H}_{{\mathbf f}_j}$ and $\mathcal{H}_{{\mathbf e}_i,{\mathbf f}_j}$ is a dense open subset of $\mathbb{R}^d$. As a result, $\mathcal{U}:=\mathcal{U}_1\cap\mathcal{U}_2$ is a nonempty open subset of $\mathcal{D}_{e,f}$ and, for all ${\mathbf x}\in\mathcal{U}$, $i\in\{1,\dots,q_1\}$ and $j\in\{1,\dots,q_2\}$, we have ${\mathbf e}_i\cdot{\mathbf x}\neq 0$, ${\mathbf f}_j\cdot{\mathbf x}\neq 0$, ${\mathbf e}_i\cdot{\mathbf x}\neq{\mathbf f}_j\cdot{\mathbf x}$, $\lfloor{\mathbf e}_i\cdot{\mathbf x}\rfloor=\lfloor{\mathbf e}_i\cdot{\mathbf x}_0\rfloor$ and $\lfloor{\mathbf f}_j\cdot{\mathbf x}\rfloor=\lfloor{\mathbf f}_j\cdot{\mathbf x}_0\rfloor$.

\subsection{Proof of assertion $(ii)$ of Theorem \ref{critère}}\label{reci1}

The aim of this section is to prove that there exists $k\in\{1,\dots,d\}$ such that there are only finitely many prime numbers $p$ such that $q_{e,f,k}({\mathbf z})\in z_k\mathbb{Z}_p[[{\mathbf z}]]$. Following Section \ref{equiv crit}, we only have to prove that there exists $k\in\{1,\dots,d\}$ such that, for all large enough prime number $p$, there exists ${\mathbf a}\in\{0,\dots,p-1\}^d$ and ${\mathbf K}\in\mathbb{N}^d$ such that $\Phi_{p,k}({\mathbf a}+p{\mathbf K})\notin p\mathbb{Z}_p$.
We will actually prove that there exists $k\in\{1,\dots,d\}$ such that, for all large enough prime number $p$, there is an ${\mathbf a}\in\{0,\dots,p-1\}^d$ such that $\Phi_{p,k}({\mathbf a})\notin p\mathbb{Z}_p$. In this case, we have
\begin{equation}\label{Phi2a}
\Phi_{p,k}({\mathbf a})=-p\mathcal{Q}({\mathbf a})\left(\sum_{i=1}^{q_1}{\mathbf e}_i^{(k)}H_{{\mathbf a}\cdot{\mathbf e}_i}-\sum_{i=1}^{q_2}{\mathbf f}_i^{(k)}H_{{\mathbf a}\cdot{\mathbf f}_i}\right).
\end{equation}
For all ${\mathbf d}\in\mathbb{N}^d$, we have
\begin{align*}
pH_{{\mathbf d}\cdot{\mathbf a}}=p\sum_{i=1}^{{\mathbf d}\cdot{\mathbf a}}\frac{1}{i}
&\equiv p\sum_{j=1}^{\lfloor{\mathbf d}\cdot{\mathbf a}/p\rfloor}\frac{1}{jp}\mod p\mathbb{Z}_p\\ &\equiv\sum_{j=1}^{\lfloor{\mathbf d}\cdot{\mathbf a}/p\rfloor}\frac{1}{j}\mod p\mathbb{Z}_p.
\end{align*} 
For all $k\in\{1,\dots,d\}$ and ${\mathbf x}\in[0,1]^d$, we set
$$
\Psi_k({\mathbf x}):=\sum_{i=1}^{q_1}\sum_{j=1}^{\lfloor{\mathbf e}_i\cdot{\mathbf x}\rfloor}\frac{{\mathbf e}_i^{(k)}}{j}-\sum_{i=1}^{q_2}\sum_{j=1}^{\lfloor{\mathbf f}_i\cdot{\mathbf x}\rfloor}\frac{{\mathbf f}_i^{(k)}}{j}.
$$

Thus, for all $k\in\{1,\dots,d\}$ and ${\mathbf a}\in\{0,\dots,p-1\}^d$, we have $\Phi_{p,k}({\mathbf a})\equiv -\mathcal{Q}({\mathbf a})\Psi_k({\mathbf a}/p)\mod p\mathbb{Z}_p$. Therefore we now have to prove that there exists $k\in\{1,\dots,d\}$ such that, for all large enough prime number $p$, there exists ${\mathbf a}\in\{0,\dots,p-1\}^d$ such that $v_p(\mathcal{Q}({\mathbf a}))=v_p(\Psi_k({\mathbf a}/p))=0$. We set $\mathcal{M}:=\max\{|{\mathbf d}|\,:\,{\mathbf d}\in\{{\mathbf e}_1,\dots,{\mathbf e}_{q_1},{\mathbf f}_1,\dots,{\mathbf f}_{q_2}\}\}$.

A constant $\mathcal{P}_1\geq\mathcal{M}$ exists such that, for all prime number $p\geq\mathcal{P}_1$, there exists ${\mathbf a}_p\in\{0,\dots,p-1\}^d$ such that ${\mathbf a}_p/p\in\mathcal{U}$. For all $\ell\geq 2$, we have ${\mathbf a}_p/p^{\ell}\leq{\mathbf a}_p/p^2<{\mathbf 1}/p$ and thus, for all ${\mathbf L}\in\{{\mathbf e}_1,\dots,{\mathbf e}_{q_1},{\mathbf f}_1,\dots,{\mathbf f}_{q_2}\}$, we have ${\mathbf L}\cdot{\mathbf a}_p/p^{\ell}<{\mathbf L}\cdot{\mathbf 1}/p\leq\mathcal{M}/p\leq 1$. Hence, for all prime number $p\geq\mathcal{P}_1$ and all $\ell\geq 2$, we have ${\mathbf a}_p/p^{\ell}\notin\mathcal{D}_{e,f}$, which implies that
$$
v_p(\mathcal{Q}({\mathbf a}_p))=\sum_{\ell=1}^{\infty}\Delta_{e,f}\left(\frac{{\mathbf a}_p}{p^{\ell}}\right)=\Delta_{e,f}\left(\frac{{\mathbf a}_p}{p}\right)=0,
$$
because $\Delta_{e,f}$ vanishes on $\mathcal{U}$ and on $[0,1[^d\setminus\mathcal{D}_{e,f}$.

So we now have to prove that there exists $k\in\{1,\dots,d\}$ and a constant $\mathcal{P}\geq\mathcal{P}_1$ such that, for all prime number $p\geq\mathcal{P}$, we have $v_p(\Psi_k({\mathbf a}_p/p))=0$.

For all prime number $p\geq\mathcal{P}_1$, all $i\in\{1,\dots,q_1\}$ and $j\in\{1,\dots,q_2\}$, we write $\alpha_i:=\lfloor{\mathbf e}_i\cdot{\mathbf a}_p/p\rfloor$ and $\beta_j:=\lfloor{\mathbf f}_j\cdot{\mathbf a}_p/p\rfloor$. According to the construction of $\mathcal{U}$ and since ${\mathbf a}_p/p\in\mathcal{U}$, we have $\lfloor{\mathbf e}_i\cdot{\mathbf a}_p/p\rfloor=\lfloor{\mathbf e}_i\cdot{\mathbf x}_0\rfloor$ and $\lfloor{\mathbf f}_j\cdot{\mathbf a}_p/p\rfloor=\lfloor{\mathbf f}_j\cdot{\mathbf x}_0\rfloor$. Therefore, the $\alpha_i$ and $\beta_j$ do not depend on $p$. Thus there exists a constant $\mathcal{P}\geq\mathcal{P}_1$ such that, for all prime number $p\geq\mathcal{P}$ and all $k\in\{1,\dots,d\}$, we have 
$$
\Psi_{k}({\mathbf a}_p/p)=\sum_{i=1}^{q_1}\sum_{j=1}^{\alpha_i}\frac{{\mathbf e}_i^{(k)}}{j}-\sum_{i=1}^{q_2}\sum_{j=1}^{\beta_j}\frac{{\mathbf f}_i^{(k)}}{j}\in\mathbb{Z}_p^{\times}\cup\{0\}.
$$

Therefore we only have to prove that there exists $k\in\{1,\dots,d\}$ such that $\Psi_k({\mathbf a}_p/p)\neq 0$. For this purpose, we will use Lemma $16$ from \cite{Delaygue} which reads as follows.

\begin{lemme}\label{lemme 16 D}
Let ${\mathbf E}:=(E_1,\dots,E_{q_1})$ and ${\mathbf F}:=(F_1,\dots,F_{q_2})$ be two disjoint sequences of positive integers. We write $\mathcal{A}:=\{E_1,\dots,E_{q_1},F_1,\dots,F_{q_2}\}$ and $\gamma_1<\cdots<\gamma_t=1$ for the rational numbers which satisfy $\{\gamma_1,\dots,\gamma_t\}=\bigcup_{a\in\mathcal{A}}\{\frac{1}{a},\frac{2}{a},\dots,\frac{a-1}{a},1\}$ and $m_i\in\mathbb{Z}$ the amplitude of the jump of $\Delta_{{\mathbf E},{\mathbf F}}$ in $\gamma_i$. If there exists $i_0\in\{1,\dots,t\}$ such that $\Delta_{{\mathbf E},{\mathbf F}}\geq 0$ on $[\gamma_1,\gamma_{i_0}]$, then we have 
$$
\sum_{k=1}^{i_0}\frac{m_k}{\gamma_k}>0\quad\textup{and}\quad\prod_{k=1}^{i_0}\left(1+\frac{1}{\gamma_k}\right)^{m_k}>1.
$$
\end{lemme}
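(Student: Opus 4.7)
The plan is to rewrite each of the two quantities by summation by parts (Abel's trick), reducing the question to two elementary observations about the partial sums $S_k := \sum_{j=1}^k m_j$. First, since $\gamma_1 = 1/\max\mathcal{A}$, every $a \in \mathcal{A}$ satisfies $ax < 1$ on $[0,\gamma_1)$, so all the floors $\lfloor ax\rfloor$ vanish there and $\Delta_{{\mathbf E},{\mathbf F}}$ is identically zero on $[0,\gamma_1)$. Because $\Delta_{{\mathbf E},{\mathbf F}}$ is right-continuous and piecewise constant with jumps $m_k$ at $\gamma_k$, its value on $[\gamma_k,\gamma_{k+1})$ is exactly $S_k = \Delta_{{\mathbf E},{\mathbf F}}(\gamma_k)$, and the hypothesis translates to $S_k \geq 0$ for $1 \leq k \leq i_0$. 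Second, $m_1$ is the signed count of occurrences of $\max\mathcal{A}$ in $E$ minus those in $F$, and the disjointness of $E$ and $F$ forces one of these counts to be zero; hence $m_1 \neq 0$, and combined with $S_1 = m_1 \geq 0$ we get $m_1 > 0$.

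With these facts in hand, Abel summation yields
$$\sum_{k=1}^{i_0}\frac{m_k}{\gamma_k} = \sum_{k=1}^{i_0-1} S_k\Bigl(\frac{1}{\gamma_k}-\frac{1}{\gamma_{k+1}}\Bigr) + \frac{S_{i_0}}{\gamma_{i_0}},$$
a sum of non-negative terms because $\gamma_k < \gamma_{k+1}$. Strict positivity comes from the $k=1$ summand $m_1(1/\gamma_1 - 1/\gamma_2)$ when $i_0 \geq 2$, or directly from $m_1/\gamma_1$ when $i_0 = 1$. For the product inequality I would take logarithms, reducing it to $\sum_{k=1}^{i_0} m_k\,b_k > 0$ with $b_k := \log(1+1/\gamma_k)$. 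Since $\gamma_k$ is strictly increasing, $b_k$ is strictly positive and strictly decreasing, so the analogous Abel identity
$$\sum_{k=1}^{i_0} m_k b_k = \sum_{k=1}^{i_0-1} S_k(b_k - b_{k+1}) + S_{i_0}\,b_{i_0}$$
is non-negative and is strictly positive by the same $m_1 > 0$ argument.

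The only genuine obstacle is the strict positivity, which rests entirely on $m_1 > 0$; that in turn crucially uses both the choice $\gamma_1 = 1/\max\mathcal{A}$ (to ensure that only floors associated with the maximal element contribute to the first jump) and the disjointness hypothesis (to rule out cancellation between the contributions of $E$ and $F$). Everything else is routine summation by parts.
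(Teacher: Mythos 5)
Your proof is correct and complete. The key observations are all there: (1) $\Delta_{\mathbf{E},\mathbf{F}}$ is right-continuous, piecewise constant, vanishing on $[0,\gamma_1)$, so the hypothesis is equivalent to $S_k := \sum_{j=1}^k m_j \geq 0$ for $1 \leq k \leq i_0$; (2) $m_1 \neq 0$ because only the maximal element of $\mathcal{A}$ contributes a jump at $\gamma_1 = 1/\max\mathcal{A}$, and disjointness of $\mathbf{E}$ and $\mathbf{F}$ prevents cancellation, hence $m_1 > 0$; (3) Abel summation against the strictly decreasing positive weights $1/\gamma_k$ (resp.\ $\log(1+1/\gamma_k)$) turns each sum into a non-negative combination of the $S_k$'s with a strictly positive contribution from the $k=1$ term. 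All steps check out. Note that the present paper does not contain a proof of this lemma at all --- it is quoted as Lemma~16 from the cited earlier paper \cite{Delaygue} and used as a black box --- so there is no in-paper proof to compare against; your argument is a self-contained and natural reconstruction.
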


We will use Lemma \ref{lemme 16 D} with ${\mathbf E}_p:=({\mathbf e}_1\cdot{\mathbf a}_p,\dots,{\mathbf e}_{q_1}\cdot{\mathbf a}_p)$ instead of ${\mathbf E}$ and ${\mathbf F}_p:=({\mathbf f}_1\cdot{\mathbf a}_p,\dots,{\mathbf f}_{q_2}\cdot{\mathbf a}_p)$ instead of ${\mathbf F}$. 

First, we have to prove that ${\mathbf E}_p$ and ${\mathbf F}_p$ are two disjoint sequences of positive integers. Indeed, according to the construction of $\mathcal{U}$, for all $i\in\{1,\dots,q_1\}$ and all $j\in\{1,\dots,q_2\}$, we have ${\mathbf e}_i\cdot{\mathbf a}_p/p\neq 0$, ${\mathbf f}_j\cdot{\mathbf a}_p/p\neq 0$ and ${\mathbf e}_i\cdot{\mathbf a}_p/p\neq{\mathbf f}_j\cdot{\mathbf a}_p/p$, thus ${\mathbf e}_i\cdot{\mathbf a}_p\neq 0$, ${\mathbf f}_j\cdot{\mathbf a}_p\neq 0$ and ${\mathbf e}_i\cdot{\mathbf a}_p\neq{\mathbf f}_j\cdot{\mathbf a}_p$, which gives us that ${\mathbf E}_p$ and ${\mathbf F}_p$ are two disjoint sequences of positive integers. 

We write $\mathcal{A}:=\{{\mathbf e}_1\cdot{\mathbf a}_p,\dots,{\mathbf e}_{q_1}\cdot{\mathbf a}_p,{\mathbf f}_1\cdot{\mathbf a}_p,\dots,{\mathbf f}_{q_2}\cdot{\mathbf a}_p\}$ and $\gamma_1<\cdots<\gamma_t=1$ for the rational numbers which satisfy $\{\gamma_1,\dots,\gamma_t\}=\bigcup_{a\in\mathcal{A}}\{\frac{1}{a},\frac{2}{a},\dots,\frac{a-1}{a},1\}$ and $m_i\in\mathbb{Z}$ the amplitude of the jump of $\Delta_{{\mathbf E}_p,{\mathbf F}_p}$ in $\gamma_i$. As ${\mathbf a}_p/p\in\mathcal{D}_{{\mathbf e},{\mathbf f}}$, there exists $a\in\mathcal{A}$ such that $a\geq p$ and so $\max(\mathcal{A})\geq p$. Hence, we have $\gamma_1=1/\max(\mathcal{A})\leq 1/p$. Thus there exists $i_0\in\{1,\dots,t-1\}$ such that $\gamma_{i_0}\leq 1/p<\gamma_{i_0+1}$. Furthermore, for all $x\in[0,1]$, we have
$$
\Delta_{{\mathbf E}_p,{\mathbf F}_p}(x)=\sum_{i=1}^{q_1}\lfloor({\mathbf e}_i\cdot{\mathbf a}_p)x\rfloor-\sum_{j=1}^{q_2}\lfloor({\mathbf f}_j\cdot{\mathbf a}_p)x\rfloor=\Delta_{e,f}(x{\mathbf a}_p)\geq 0,
$$
because $\Delta_{{\mathbf e},{\mathbf f}}\geq 0$ on $[0,1]^d$. In particular, $\Delta_{{\mathbf E}_p,{\mathbf F}_p}\geq 0$ on $[\gamma_1,\gamma_{i_0}]$.

We can therefore apply Lemma \ref{lemme 16 D} which results in
\begin{align}
0<\sum_{i=1}^{i_0}\frac{m_i}{\gamma_i}
&=\sum_{c\in{\mathbf E}_p}\sum_{j=1}^{\lfloor c/p\rfloor}\frac{c}{j}-\sum_{d\in{\mathbf F}_p}\sum_{j=1}^{\lfloor d/p\rfloor}\frac{d}{j}\label{thuy}\\
&=\sum_{i=1}^{q_1}\sum_{j=1}^{\lfloor{\mathbf a}_p\cdot{\mathbf e}_i/p\rfloor}\frac{{\mathbf a}_p\cdot{\mathbf e}_i}{j}-\sum_{i=1}^{q_2}\sum_{j=1}^{\lfloor{\mathbf a}_p\cdot{\mathbf f}_i/p\rfloor}\frac{{\mathbf a}_p\cdot{\mathbf f}_i}{j}\notag\\
&=\sum_{k=1}^d{\mathbf a}_p^{(k)}\left(\sum_{i=1}^{q_1}\sum_{j=1}^{\alpha_i}\frac{{\mathbf e}_i^{(k)}}{j}-\sum_{i=1}^{q_2}\sum_{j=1}^{\beta_i}\frac{{\mathbf f}_i^{(k)}}{j}\right)=\sum_{k=1}^d\mathbf{a}_p^{(k)}\Psi_k({\mathbf a}_p/p),\notag
\end{align}
where \eqref{thuy} is valid because the abscissas of the jumps of $\Delta_{{\mathbf E}_p,{\mathbf F}_p}$ on $[0,1/p]$ are exactly the rational numbers $j/a$ with $a\in\mathcal{A}$ and $j\leq\lfloor a/p\rfloor$, and an abscissa $j/a$ corresponds to a jump with positive amplitude when $a\in{\mathbf E}_p$ and to a jump with negative amplitude when $a\in{\mathbf F}_p$.

Thus there exists $k\in\{1,\dots,d\}$ such that $\Psi_k({\mathbf a}_p/p)\neq 0$, which finishes the proof of assertion $(ii)$ of Theorem \ref{critère}.

\subsection{Proof of assertion $(ii)$ of Theorem \ref{critère2}}\label{reci2}

According to Section \ref{reci1}, there exists $k_0\in\{1,\dots,d\}$ such that there are only finitely many prime numbers $p$ such that $q_{e,f,k_0}(\mathbf{z})\in\mathbb{Z}_p[[\mathbf{z}]]$. In order to finish the proof of assertion $(ii)$ of Theorem~\ref{critère2}, we only have to prove that, for all $\mathbf{L}\in\mathcal{E}$ satisfying $\mathbf{L}^{(k_0)}\geq 1$, there are only finitely many prime numbers $p$ such that $q_{\mathbf{L},e,f}(\mathbf{z})\in\mathbb{Z}_p[[\mathbf{z}]]$. During the proof, we fix $\mathbf{L}\in\mathcal{E}_{e,f}$ satisfying $\mathbf{L}^{(k_0)}\geq 1$ (\footnote{Such a $\mathbf{L}$ exists because $q_{e,f,k_0}(\mathbf{z})\notin z_{k_0}\mathbb{Z}[[\mathbf{z}]]$.}). We will separate the proof into two cases depending on whether $\lfloor\mathbf{L}\cdot\mathbf{x}_0\rfloor=0$ or $\lfloor\mathbf{L}\cdot\mathbf{x}_0\rfloor\neq 0$.

According to Section \ref{reci1}, we know that there exists a constant $\mathcal{P}_1$ such that, for all prime number $p\geq\mathcal{P}_1$, there exists ${\mathbf a}_p\in\{0,\dots,p-1\}^d$ such that ${\mathbf a}_p/p\in\mathcal{U}$ and $v_p(\mathcal{Q}({\mathbf a}_p))=0$.

\subsubsection{When $\lfloor{\mathbf L}\cdot{\mathbf x}_0\rfloor\neq 0$}\label{premier temps}

The aim of this section is to prove that there exists a constant $\mathcal{P}\geq\mathcal{P}_1$ such that, for all prime number $p\geq\mathcal{P}$, we have $\Phi_{{\mathbf L},p}({\mathbf a}_p)\notin p\mathbb{Z}_p$, which, according to Section \ref{equiv crit}, will prove that there are only finitely many prime numbers $p$ such that $q_{{\mathbf L},e,f}({\mathbf z})\in\mathbb{Z}_p[[{\mathbf z}]]$.

We recall that, for all ${\mathbf a}\in\{0,\dots,p-1\}^d$, we have 
\begin{equation}\label{PhiaII}
\Phi_{{\mathbf L},p}({\mathbf a})=-p\mathcal{Q}({\mathbf a})H_{{\mathbf L}\cdot{\mathbf a}}\equiv -\mathcal{Q}({\mathbf a})H_{\lfloor{\mathbf L}\cdot{\mathbf a}/p\rfloor}\mod p\mathbb{Z}_p.
\end{equation}

For all prime number $p\geq\mathcal{P}_1$, we have $\lfloor{\mathbf L}\cdot{\mathbf a}_p/p\rfloor=\lfloor{\mathbf L}\cdot{\mathbf x}_0\rfloor\neq 0$ therefore $H_{\lfloor{\mathbf L}\cdot{\mathbf a}_p/p\rfloor}\in\{H_1,\dots,H_{|{\mathbf L}|}\}$. A constant $\mathcal{P}\geq\mathcal{P}_1$ exists such that, for all prime number $p\geq\mathcal{P}$, we have $\{H_1,\dots,H_{|{\mathbf L}|}\}\subset\mathbb{Z}_p^{\times}$. Thus, for all prime number $p\geq\mathcal{P}$, we have $\mathcal{Q}({\mathbf a}_p)H_{\lfloor{\mathbf L}\cdot{\mathbf a}_p/p\rfloor}\in\mathbb{Z}_p^{\times}$ and, following \eqref{PhiaII}, we obtain $\Phi_{{\mathbf L},p}({\mathbf a}_p)\notin p\mathbb{Z}_p$. 

We observe that in this case, we did not use the hypothesis $\mathbf{L}^{(k_0)}\geq 1$.

\subsubsection{When $\lfloor{\mathbf L}\cdot{\mathbf x}_0\rfloor=0$}\label{deuxième temps}

The aim of this section is to prove that there exists $r\in\mathbb{N}$, $r\geq 1$ and a constant $\mathcal{P}'\geq\mathcal{P}_1$ such that, for all prime number $p\geq\mathcal{P}'$, we have $\Phi_{{\mathbf L},p}({\mathbf a}_p+pr{\mathbf 1}_{k_0})\notin p\mathbb{Z}_p$. According to Section \ref{equiv crit}, this will prove that there are only finitely many prime numbers $p$ such that $q_{{\mathbf L},e,f}({\mathbf z})\in\mathbb{Z}_p[[{\mathbf z}]]$. 

In the sequel, for all $k\in\{1,\dots,d\}$, we write $R_k$ for the rational function defined by
\begin{equation}\label{DéfiR_k(X)}
R_k(X):=\frac{\prod_{i=1}^{q_1}\prod_{j=1}^{\alpha_i}\left(1+\frac{{\mathbf e}_i^{(k)}}{j}X\right)}{\prod_{i=1}^{q_2}\prod_{j=1}^{\beta_i}\left(1+\frac{{\mathbf f}_i^{(k)}}{j}X\right)}.
\end{equation}
We will use the following lemma, which we will prove at the end of this section.

\begin{lemme}\label{Re : Phi}
For all $r\in\mathbb{N}$, $r\geq 1$, there exists a constant $\mathcal{P}_r\geq\mathcal{P}_1$ such that, for all prime number $p\geq\mathcal{P}_r$ and all $k\in\{1,\dots,d\}$, we have
$$
\Phi_{{\mathbf L},p}({\mathbf a}_p+pr{\mathbf 1}_k)\equiv-\sum_{j=1}^rH_{j{\mathbf L}^{(k)}}\mathcal{Q}({\mathbf a}_p)\mathcal{Q}(j{\mathbf 1}_k)\mathcal{Q}((r-j){\mathbf 1}_k)\big(R_k(j)-R_k(r-j)\big)\mod p\mathbb{Z}_p.
$$
\end{lemme}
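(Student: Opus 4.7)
The plan is to expand $\Phi_{\mathbf{L},p}(\mathbf{a}_p+pr\mathbf{1}_k)$ from its definition in Section~\ref{equiv crit} and to reduce each factor modulo $p$. The indices $\mathbf{j}$ with $\mathbf{0}\leq\mathbf{j}\leq r\mathbf{1}_k$ are exactly $\mathbf{j}=j\mathbf{1}_k$ for $j\in\{0,\dots,r\}$, so
$$
\Phi_{\mathbf{L},p}(\mathbf{a}_p+pr\mathbf{1}_k)=\sum_{j=0}^{r}\mathcal{Q}((r-j)\mathbf{1}_k)\,\mathcal{Q}(\mathbf{a}_p+jp\mathbf{1}_k)\bigl(H_{(r-j)\mathbf{L}^{(k)}}-pH_{\mathbf{L}\cdot(\mathbf{a}_p+jp\mathbf{1}_k)}\bigr).
$$
I would pick $\mathcal{P}_r\geq\mathcal{P}_1$ large enough so that every integer that appears as an argument of a factorial, as a summand in a harmonic number, or in a denominator of $R_k(j)$, $R_k(r-j)$ (for $j\in\{0,\dots,r\}$) is a $p$-adic unit after removing the powers of $p$ it obviously contains; this is possible because $\mathbf{e}_i$, $\mathbf{f}_i$, $\mathbf{L}$, $r$, and the integers $\alpha_i=\lfloor\mathbf{e}_i\cdot\mathbf{x}_0\rfloor$, $\beta_i=\lfloor\mathbf{f}_i\cdot\mathbf{x}_0\rfloor$ are independent of $p$. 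The proof then reduces to proving, for each $j\in\{0,\dots,r\}$, the two congruences modulo~$p$:
\begin{equation*}
\text{(a)}\quad\mathcal{Q}(\mathbf{a}_p+jp\mathbf{1}_k)\equiv\mathcal{Q}(\mathbf{a}_p)\,\mathcal{Q}(j\mathbf{1}_k)\,R_k(j),\qquad\text{(b)}\quad pH_{\mathbf{L}\cdot(\mathbf{a}_p+jp\mathbf{1}_k)}\equiv H_{j\mathbf{L}^{(k)}}.
\end{equation*}

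Congruence (b) is the easier one. Since $\lfloor\mathbf{L}\cdot\mathbf{x}_0\rfloor=0$ and $\mathbf{a}_p/p\in\mathcal{U}$, we have $\mathbf{L}\cdot\mathbf{a}_p<p$, so the multiples of $p$ in $\{1,\dots,\mathbf{L}\cdot\mathbf{a}_p+jp\mathbf{L}^{(k)}\}$ are exactly $p,2p,\dots,j\mathbf{L}^{(k)}p$. Splitting $pH_{\mathbf{L}\cdot(\mathbf{a}_p+jp\mathbf{1}_k)}$ according to divisibility by $p$, the terms $p/i$ with $\gcd(i,p)=1$ lie in $p\mathbb{Z}_p$, while the multiples of $p$ contribute exactly $\sum_{\ell=1}^{j\mathbf{L}^{(k)}}1/\ell=H_{j\mathbf{L}^{(k)}}$.

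Congruence (a) is the technical heart of the lemma. For each $\mathbf{d}\in\{\mathbf{e}_1,\dots,\mathbf{e}_{q_1},\mathbf{f}_1,\dots,\mathbf{f}_{q_2}\}$, write $\mathbf{d}\cdot\mathbf{a}_p=\gamma_{\mathbf{d}}\,p+\rho_{\mathbf{d}}$ with $0\leq\rho_{\mathbf{d}}<p$ (so that $\gamma_{\mathbf{e}_i}=\alpha_i$ and $\gamma_{\mathbf{f}_i}=\beta_i$); then
$$
\frac{(\mathbf{d}\cdot(\mathbf{a}_p+jp\mathbf{1}_k))!}{(\mathbf{d}\cdot\mathbf{a}_p)!}=\prod_{n=1}^{jp\mathbf{d}^{(k)}}(\mathbf{d}\cdot\mathbf{a}_p+n).
$$
Among these $jp\mathbf{d}^{(k)}$ consecutive integers, the multiples of $p$ are precisely $(\gamma_{\mathbf{d}}+1)p,\dots,(\gamma_{\mathbf{d}}+j\mathbf{d}^{(k)})p$ and contribute the factor $p^{j\mathbf{d}^{(k)}}(\gamma_{\mathbf{d}}+j\mathbf{d}^{(k)})!/\gamma_{\mathbf{d}}!$, while the remaining $j\mathbf{d}^{(k)}(p-1)$ non-multiples of $p$ sweep out $j\mathbf{d}^{(k)}$ complete systems of nonzero residues modulo $p$, so by Wilson's theorem their product is $\equiv(-1)^{j\mathbf{d}^{(k)}}\pmod p$. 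Multiplying these contributions over $\mathbf{d}\in e$ and dividing by the analogous product over $\mathbf{d}\in f$, the power of $p$ contributes $p^{j(|e|^{(k)}-|f|^{(k)})}$ and the sign $(-1)^{j(|e|^{(k)}-|f|^{(k)})}$; both collapse since $|e|^{(k)}=|f|^{(k)}$. A rearrangement of the remaining factorials, together with the explicit formula~\eqref{D�fiR_k(X)} for $R_k$, then yields
$$
\frac{\mathcal{Q}(\mathbf{a}_p+jp\mathbf{1}_k)}{\mathcal{Q}(\mathbf{a}_p)}\equiv\frac{\prod_{i=1}^{q_1}(\alpha_i+j\mathbf{e}_i^{(k)})!/\alpha_i!}{\prod_{i=1}^{q_2}(\beta_i+j\mathbf{f}_i^{(k)})!/\beta_i!}\equiv\mathcal{Q}(j\mathbf{1}_k)\,R_k(j)\pmod p.
$$

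Substituting (a) and (b) into the expansion of $\Phi_{\mathbf{L},p}(\mathbf{a}_p+pr\mathbf{1}_k)$ gives, modulo $p\mathbb{Z}_p$,
$$
\sum_{j=0}^{r}\mathcal{Q}(\mathbf{a}_p)\,\mathcal{Q}(j\mathbf{1}_k)\,\mathcal{Q}((r-j)\mathbf{1}_k)\,R_k(j)\bigl(H_{(r-j)\mathbf{L}^{(k)}}-H_{j\mathbf{L}^{(k)}}\bigr).
$$
Performing the substitution $j\mapsto r-j$ only on the sub-sum involving $H_{(r-j)\mathbf{L}^{(k)}}$ symmetrizes the expression; the $j=0$ contribution vanishes since $H_0=0$, and a regrouping produces the announced identity. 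The main obstacle is congruence (a), where the $p$-adic bookkeeping of the powers of $p$ and of the signs has to be handled simultaneously, and where both cancellations rely crucially on the hypothesis $|e|=|f|$ underlying the whole statement.
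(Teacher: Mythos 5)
Your proposal is correct, and it arrives at the same identity by the same overall roadmap: expand $\Phi_{\mathbf{L},p}(\mathbf{a}_p+pr\mathbf{1}_k)$ from the formula in Section~\ref{equiv crit}, reduce $pH_{\mathbf{L}\cdot(\mathbf{a}_p+jp\mathbf{1}_k)}$ to $H_{j\mathbf{L}^{(k)}}$ using $\lfloor\mathbf{L}\cdot\mathbf{a}_p/p\rfloor=0$ (your~(b) is exactly the paper's step \eqref{transf765}), evaluate the $\mathcal{Q}$-ratio modulo $p$, and symmetrize via $j\mapsto r-j$. The genuine difference is how you prove~(a), i.e.\ the congruence $\mathcal{Q}(\mathbf{a}_p+jp\mathbf{1}_k)\equiv\mathcal{Q}(\mathbf{a}_p)\mathcal{Q}(j\mathbf{1}_k)R_k(j)\pmod{p}$. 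The paper first establishes the general-$\mathbf{K}$ symmetrized expression \eqref{explik362}, then specializes $\mathbf{K}=r\mathbf{1}_k$, and finally reduces $\mathcal{Q}(\mathbf{a}_p+pn\mathbf{1}_k)/(\mathcal{Q}(\mathbf{a}_p)\mathcal{Q}(n\mathbf{1}_k))$ by splitting off $\mathcal{Q}(pn\mathbf{1}_k)/\mathcal{Q}(n\mathbf{1}_k)$ and invoking Lemma~\ref{lemme 16} (the $\Gamma_p$-based congruence) in the special case $s=0$, $\mathbf{c}=\mathbf{0}$. Your Wilson's theorem computation handles $\mathcal{Q}(\mathbf{a}_p+jp\mathbf{1}_k)/\mathcal{Q}(\mathbf{a}_p)$ in one stroke, by sorting each block of $jp\mathbf{d}^{(k)}$ consecutive integers into multiples of $p$ (which give $p^{j\mathbf{d}^{(k)}}(\gamma_{\mathbf{d}}+j\mathbf{d}^{(k)})!/\gamma_{\mathbf{d}}!$) and $j\mathbf{d}^{(k)}$ complete nonzero residue systems (which give $(-1)^{j\mathbf{d}^{(k)}}\pmod p$), and then observing that both the excess powers of $p$ and the Wilson signs cancel across the $e$ and $f$ products precisely because $|e|^{(k)}=|f|^{(k)}$. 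This is essentially the elementary content underneath the special case of Lemma~\ref{lemme 16} that the paper uses, so you gain a self-contained argument (no appeal to the $p$-adic gamma function) at the price of re-deriving that special case; the tradeoff is a reasonable one. You symmetrize after substituting (a), (b); the paper symmetrizes first. This is a cosmetic reordering, and the $j=0$ term drops out in both derivations since $H_0=0$. One small point to make explicit when writing this up: the cancellation of the power of $p$ and the sign really does require $|e|=|f|$ componentwise, which you correctly emphasize, and the statement $\lfloor\mathbf{L}\cdot\mathbf{a}_p/p\rfloor=\lfloor\mathbf{L}\cdot\mathbf{x}_0\rfloor$ needs $\mathcal{U}$ to be shrunk using Lemma~\ref{localconst} applied to the finite family $\mathcal{E}_{e,f}$ as well, since $\mathbf{L}$ need not itself be one of the $\mathbf{e}_i$, $\mathbf{f}_j$; the paper uses the same fact and is implicitly making that adjustment.
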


According to the end of Section \ref{reci1}, we know that 
\begin{equation}\label{chemical}
\sum_{i=1}^{q_1}\sum_{j=1}^{\alpha_i}\frac{{\mathbf e}_i^{(k_0)}}{j}-\sum_{i=1}^{q_2}\sum_{j=1}^{\beta_i}\frac{{\mathbf f}_i^{(k_0)}}{j}\neq 0.
\end{equation}

Inequality \eqref{chemical} proves that $R_{k_0}(X)$ is not a constant equal to $1$. Thus there exists $r\in\mathbb{N}$ such that $R_{k_0}(r)\neq 1$. Let $r_0$ be the smallest positive integer satisfying $R_{k_0}(r_0)\neq 1$. Applying Lemma \ref{Re : Phi} with $k_0$ instead of $k$ and $r_0$ instead of $r$, we obtain that a constant $\mathcal{P}_{r_0}\geq\mathcal{P}_1$ exists such that, for all prime number $p\geq\mathcal{P}_{r_0}$, we have
\begin{align}
\Phi_{{\mathbf L},p}({\mathbf a}_p+&pr_0{\mathbf 1}_{k_0})\notag\\
&\equiv-\sum_{j=1}^{r_0}H_{j{\mathbf L}^{(k_0)}}\mathcal{Q}({\mathbf a}_p)\mathcal{Q}(j{\mathbf 1}_{k_0})\mathcal{Q}((r_0-j){\mathbf 1}_{k_0})\big(R_{k_0}(j)-R_{k_0}(r_0-j)\big)\mod p\mathbb{Z}_p\notag\\
&\equiv-H_{r_0{\mathbf L}^{(k_0)}}\mathcal{Q}({\mathbf a}_p)\mathcal{Q}(r_0{\mathbf 1}_{k_0})\big(R_{k_0}(r_0)-1\big)\mod p\mathbb{Z}_p,\label{explikle}
\end{align}
where \eqref{explikle} is valid because, for all $j\in\{1,\dots,r_0-1\}$, we have $R_{k_0}(j)=R_{k_0}(r_0-j)=1$. Since $R_{k_0}(r_0)\neq 1$, we obtain that if ${\mathbf L}^{(k_0)}\geq 1$, then there exists a constant $\mathcal{P}\geq\mathcal{P}_{r_0}$ such that, for all prime number $p\geq\mathcal{P}$, we have 
$$
H_{r_0{\mathbf L}^{(k_0)}}\mathcal{Q}({\mathbf a}_p)\mathcal{Q}(r_0{\mathbf 1}_{k_0})\left(R_{k_0}(r_0)-1\right)\in\mathbb{Z}_p^{\times}
$$ 
and therefore $\Phi_{{\mathbf L},p}({\mathbf a}_p+pr_0{\mathbf 1}_{k_0})\notin p\mathbb{Z}_p$, which completes the proof of assertion $(ii)$ of Theorem~\ref{critère2} modulo the proof of Lemma \ref{Re : Phi}.

\begin{proof}[Proof of Lemma \ref{Re : Phi}]
According to Section \ref{equiv crit}, for all prime number $p\geq\mathcal{P}_1$ and all ${\mathbf K}\in\mathbb{N}^d$, we have 
\begin{equation}\label{PhiaK}
\Phi_{{\mathbf L},p}({\mathbf a}_p+p{\mathbf K})=\sum_{{\mathbf 0}\leq{\mathbf j}\leq{\mathbf K}}\mathcal{Q}({\mathbf K}-{\mathbf j})\mathcal{Q}({\mathbf a}_p+p{\mathbf j})\left(H_{{\mathbf L}\cdot({\mathbf K}-{\mathbf j})}-pH_{{\mathbf L}\cdot({\mathbf a}_p+p{\mathbf j})}\right).
\end{equation}
Furthermore, we have $pH_{{\mathbf L}\cdot({\mathbf a}_p+p{\mathbf j})}\equiv H_{\left\lfloor\frac{{\mathbf L}\cdot{\mathbf a}_p+p{\mathbf L}\cdot{\mathbf j}}{p}\right\rfloor}\mod p\mathbb{Z}_p$ with $\left\lfloor\frac{{\mathbf L}\cdot{\mathbf a}_p+p{\mathbf L}\cdot{\mathbf j}}{p}\right\rfloor=\lfloor{\mathbf L}\cdot{\mathbf a}_p/p\rfloor+{\mathbf L}\cdot{\mathbf j}={\mathbf L}\cdot{\mathbf j}$ because $\lfloor{\mathbf L}\cdot{\mathbf a}_p/p\rfloor=\lfloor{\mathbf L}\cdot{\mathbf x}_0\rfloor=0$. Thereby, for all ${\mathbf K},\mathbf{j}\in\mathbb{N}^d$, ${\mathbf j}\leq{\mathbf K}$, we obtain
\begin{equation}\label{transf765} \mathcal{Q}({\mathbf K}-{\mathbf j})\mathcal{Q}({\mathbf a}_p+p{\mathbf j})pH_{{\mathbf L}\cdot({\mathbf a}_p+p{\mathbf j})}\equiv\mathcal{Q}({\mathbf K}-{\mathbf j})\mathcal{Q}({\mathbf a}_p+p{\mathbf j})H_{{\mathbf L}\cdot{\mathbf j}}\mod p\mathbb{Z}_p.
\end{equation}
Applying \eqref{transf765} to \eqref{PhiaK}, we obtain that, for all ${\mathbf K}\in\mathbb{N}^d$, we have
\begin{align}
\Phi_{{\mathbf L},p}(&{\mathbf a}_p+p{\mathbf K})\notag\\
&\equiv\sum_{{\mathbf 0}\leq{\mathbf j}\leq{\mathbf K}}\mathcal{Q}({\mathbf K}-{\mathbf j})\mathcal{Q}({\mathbf a}_p+p{\mathbf j})\left(H_{{\mathbf L}\cdot({\mathbf K}-{\mathbf j})}-H_{{\mathbf L}\cdot{\mathbf j}}\right)\mod p\mathbb{Z}_p\notag\\
&\equiv-\sum_{{\mathbf 0}\leq{\mathbf j}\leq{\mathbf K}}H_{{\mathbf L}\cdot{\mathbf j}}\big(\mathcal{Q}({\mathbf a}_p+p{\mathbf j})\mathcal{Q}({\mathbf K}-{\mathbf j})-\mathcal{Q}({\mathbf j})\mathcal{Q}({\mathbf a}_p+p({\mathbf K}-{\mathbf j}))\big)\mod p\mathbb{Z}_p\notag\\
&\equiv-\sum_{{\mathbf 0}\leq{\mathbf j}\leq{\mathbf K}}H_{{\mathbf L}\cdot{\mathbf j}}\mathcal{Q}({\mathbf a}_p)\mathcal{Q}({\mathbf j})\mathcal{Q}({\mathbf K}-{\mathbf j})\left(\frac{\mathcal{Q}({\mathbf a}_p+p{\mathbf j})}{\mathcal{Q}({\mathbf a}_p)\mathcal{Q}({\mathbf j})}-\frac{\mathcal{Q}({\mathbf a}_p+p({\mathbf K}-{\mathbf j}))}{\mathcal{Q}({\mathbf a}_p)\mathcal{Q}({\mathbf K}-{\mathbf j})}\right)\mod p\mathbb{Z}_p\label{explik362}.
\end{align}
Applying \eqref{explik362} with $r{\mathbf 1}_k$ instead of ${\mathbf K}$, we finally obtain
\begin{multline}\label{expliqua}
\Phi_{{\mathbf L},p}({\mathbf a}_p+pr{\mathbf 1}_k)\equiv\\
-\sum_{j=0}^rH_{j{\mathbf L}^{(k)}}\mathcal{Q}({\mathbf a}_p)\mathcal{Q}(j{\mathbf 1}_k)\mathcal{Q}((r-j){\mathbf 1}_k)\left(\frac{\mathcal{Q}({\mathbf a}_p+pj{\mathbf 1}_k)}{\mathcal{Q}({\mathbf a}_p)\mathcal{Q}(j{\mathbf 1}_k)}-\frac{\mathcal{Q}({\mathbf a}_p+p(r-j){\mathbf 1}_k)}{\mathcal{Q}({\mathbf a}_p)\mathcal{Q}((r-j){\mathbf 1}_k)}\right)\mod p\mathbb{Z}_p.
\end{multline}
We will now prove that, for all $n\in\mathbb{N}$ and $k\in\{1,\dots,d\}$, we have
\begin{equation}\label{expliqua2}
\frac{\mathcal{Q}({\mathbf a}_p+pn{\mathbf 1}_k)}{\mathcal{Q}({\mathbf a}_p)\mathcal{Q}(n{\mathbf 1}_k)}=R_k(n)(1+O(p)),
\end{equation}
which will enable us to conclude. We have
\begin{align}
\frac{\mathcal{Q}({\mathbf a}_p+pn{\mathbf 1}_k)}{\mathcal{Q}({\mathbf a}_p)\mathcal{Q}(n{\mathbf 1}_k)}
&=\frac{\mathcal{Q}({\mathbf a}_p+pn{\mathbf 1}_k)}{\mathcal{Q}({\mathbf a}_p)\mathcal{Q}(pn{\mathbf 1}_k)}\frac{\mathcal{Q}(pn{\mathbf 1}_k)}{\mathcal{Q}(n{\mathbf 1}_k)}\notag\\
&=\frac{1}{\mathcal{Q}({\mathbf a}_p)}\frac{\prod_{i=1}^{q_1}\prod_{j=1}^{{\mathbf e}_i\cdot{\mathbf a}_p}(pn{\mathbf e}_i^{(k)}+j)}{\prod_{i=1}^{q_2}\prod_{j=1}^{{\mathbf f}_i\cdot{\mathbf a}_p}(pn{\mathbf f}_i^{(k)}+j)}\big(1+O(p)\big)\label{explikk1}\\
&=\frac{\prod_{i=1}^{q_1}\prod_{j=1}^{{\mathbf e}_i\cdot{\mathbf a}_p}\left(1+\frac{pn{\mathbf e}_i^{(k)}}{j}\right)}{\prod_{i=1}^{q_2}\prod_{j=1}^{{\mathbf f}_i\cdot{\mathbf a}_p}\left(1+\frac{pn{\mathbf f}_i^{(k)}}{j}\right)}\big(1+O(p)\big)\notag\\
&=\frac{\prod_{i=1}^{q_1}\prod_{j=1}^{\lfloor{\mathbf e}_i\cdot{\mathbf a}_p/p\rfloor}\left(1+\frac{{\mathbf e}_i^{(k)}}{j}n\right)}{\prod_{i=1}^{q_2}\prod_{j=1}^{\lfloor{\mathbf f}_i\cdot{\mathbf a}_p/p\rfloor}\left(1+\frac{{\mathbf f}_i^{(k)}}{j}n\right)}\big(1+O(p)\big)\label{explikk2}\\
&=R_k(n)\big(1+O(p)\big),\notag
\end{align} 
where we obtain \eqref{explikk1} by applying Lemma \ref{lemme 16} with $s=0$, ${\mathbf c}={\mathbf 0}$ and $n{\mathbf 1}_k$ instead of ${\mathbf m}$, which leads to $\mathcal{Q}(pn{\mathbf 1}_k)/\mathcal{Q}(n{\mathbf 1}_k)=1+O(p)$. Equation \eqref{explikk2} is valid because, for all ${\mathbf d}\in\{{\mathbf e}_1,\dots,{\mathbf e}_{q_1},{\mathbf f}_1,\dots,{\mathbf f}_{q_2}\}$ and $j\in\{1,\dots,{\mathbf d}\cdot{\mathbf a}_p\}$, if $j$ is not divisible by  $p$ then we have $1+\frac{pn{\mathbf d}^{(k)}}{j}=1+O(p)$. 

There exists a constant $\mathcal{P}_r\geq\mathcal{P}_1$ such that, for all prime number $p\geq\mathcal{P}_r$ and all $n\in\{0,\dots,r\}$, we have $R_k(n)\in\mathbb{Z}_p^{\times}$ and $H_{n{\mathbf L}^{(k)}}\in\mathbb{Z}_p$. Therefore, applying \eqref{expliqua2} to \eqref{expliqua}, we obtain that, for all prime number $p\geq\mathcal{P}_r$, we have
$$
\Phi_{{\mathbf L},p}({\mathbf a}_p+pr{\mathbf 1}_k)\equiv-\sum_{j=1}^rH_{j{\mathbf L}^{(k)}}\mathcal{Q}({\mathbf a}_p)\mathcal{Q}(j{\mathbf 1}_k)\mathcal{Q}((r-j){\mathbf 1}_k)\left(R_k(j)-R_k(r-j)\right)\mod p\mathbb{Z}_p,
$$
which finishes the proof of Lemma \ref{Re : Phi}.
\end{proof}

\section{Proof of Theorem \ref{cas e>f}}\label{démo e>f}

We assume the hypothesis of Theorem \ref{cas e>f}. The aim of this section is to prove that there are only finitely many prime numbers $p$ such that $q_{e,f,k}({\mathbf z})\in z_k\mathbb{Z}_p[[{\mathbf z}]]$ and that, for all ${\mathbf L}\in\mathcal{E}_{e,f}$ satisfying ${\mathbf L}^{(k)}\geq 1$, there are only finitely many prime numbers $p$ such that $q_{{\mathbf L},e,f}({\mathbf z})\in\mathbb{Z}_p[[{\mathbf z}]]$. We fix a ${\mathbf L}\in\mathcal{E}_{e,f}$ satisfying ${\mathbf L}^{(k)}\geq 1$ throughout this section. 
 
According to Section \ref{equiv crit}, we only have to prove that, for all large enough prime number $p$, there exists ${\mathbf a}\in\{0,\dots,p-1\}^d$ and ${\mathbf K}\in\mathbb{N}^d$ such that $\Phi_{p,k}({\mathbf a}+{\mathbf K}p)\notin p\mathbb{Z}_p$ and $\Phi_{{\mathbf L},p}({\mathbf a}+{\mathbf K}p)\notin p\mathbb{Z}_p$. In fact, we will prove that, for all large enough prime number $p$, we have $\Phi_{p,k}(p{\mathbf 1}_k)\notin p\mathbb{Z}_p$ and $\Phi_{{\mathbf L},p}(p{\mathbf 1}_k)\notin p\mathbb{Z}_p$. We have  
\begin{align}
&\Phi_{p,k}(p{\mathbf 1}_k)\notag\\
&=\sum_{j=0}^1\mathcal{Q}((1-j){\mathbf 1}_k)\mathcal{Q}(jp{\mathbf 1}_k)\left(\sum_{i=1}^{q_1}{\mathbf e}_i^{(k)}(H_{{\mathbf e}_i^{(k)}(1-j)}-pH_{{\mathbf e}_i^{(k)}jp})-\sum_{i=1}^{q_2}{\mathbf f}_i^{(k)}(H_{{\mathbf f}_i^{(k)}(1-j)}-pH_{{\mathbf f}_i^{(k)}jp})\right)\notag\\
&=\mathcal{Q}({\mathbf 1}_k)\left(\sum_{i=1}^{q_1}{\mathbf e}_i^{(k)}H_{{\mathbf e}_i^{(k)}}-\sum_{i=1}^{q_2}{\mathbf f}_i^{(k)}H_{{\mathbf f}_i^{(k)}}\right)-p\mathcal{Q}(p{\mathbf 1}_k)\left(\sum_{i=1}^{q_1}{\mathbf e}_i^{(k)}H_{{\mathbf e}_i^{(k)}p}-\sum_{i=1}^{q_2}{\mathbf f}_i^{(k)}H_{{\mathbf f}_i^{(k)}p}\right)\label{nkj1}
\end{align}
and
\begin{align}
\Phi_{{\mathbf L},p}(p{\mathbf 1}_k)&=\sum_{j=0}^1\mathcal{Q}((1-j){\mathbf 1}_k)\mathcal{Q}(jp{\mathbf 1}_k)(H_{{\mathbf L}^{(k)}(1-j)}-pH_{{\mathbf L}^{(k)}jp})\notag\\
&=\mathcal{Q}({\mathbf 1}_k)H_{{\mathbf L}^{(k)}}-p\mathcal{Q}(p{\mathbf 1}_k)H_{{\mathbf L}^{(k)}p}\label{nkj}.
\end{align}

There exists a constant $\mathcal{P}_1$ such that, for all prime number $p\geq\mathcal{P}_1$, we have 
$$
\sum_{i=1}^{q_1}{\mathbf e}_i^{(k)}H_{{\mathbf e}_i^{(k)}}-\sum_{i=1}^{q_2}{\mathbf f}_i^{(k)}H_{{\mathbf f}_i^{(k)}}\in\mathbb{Z}_p^{\times}\cup\{0\}
$$ 
and $H_{{\mathbf L}^{(k)}}\in\mathbb{Z}_p^{\times}$ because ${\mathbf L}^{(k)}\geq 1$. In the sequel, we write $\Delta_k$ for the Landau's function associated with sequences $e^{(k)}:=({\mathbf e}_1^{(k)},\dots,{\mathbf e}_{q_1}^{(k)})$ and $f^{(k)}:=({\mathbf f}_1^{(k)},\dots,{\mathbf f}_{q_2}^{(k)})$. We also write $M$ for the largest element of sequences $e^{(k)}$ and $f^{(k)}$. We note that $M$ is nonzero because $|e|^{(k)}>|f|^{(k)}$, and that $\Delta_k$ vanishes on $[0,1/M[$. If $p>M$, then, for all $\ell\geq 1$, we have $1/p^{\ell}<1/M$ and thus $v_p(\mathcal{Q}({\mathbf 1}_k))=\sum_{\ell=1}^{\infty}\Delta_{e,f}({\mathbf 1}_k/p^{\ell})=\sum_{\ell=1}^{\infty}\Delta_k(1/p^{\ell})=0$. Hence, for all prime number $p>\max(\mathcal{P}_1,M)=:\mathcal{P}_2$, we have 
\begin{equation}\label{valu1}
\mathcal{Q}({\mathbf 1}_k)\left(\sum_{i=1}^{q_1}{\mathbf e}_i^{(k)}H_{{\mathbf e}_i^{(k)}}-\sum_{i=1}^{q_2}{\mathbf f}_i^{(k)}H_{{\mathbf f}_i^{(k)}}\right)\in\mathbb{Z}_p^{\times}\cup\{0\}\quad\textup{and}\quad\mathcal{Q}({\mathbf 1}_k)H_{{\mathbf L}^{(k)}}\in\mathbb{Z}_p^{\times}.
\end{equation}
Furthermore, we have 
$$
pH_{{\mathbf L}^{(k)}p}=p\left(\sum_{i=1}^{{\mathbf L}^{(k)}}\frac{1}{ip}+\sum_{\underset{p\nmid j}{j=1}}^{{\mathbf L}^{(k)}p}\frac{1}{j}\right)\equiv H_{{\mathbf L}^{(k)}}\mod p\mathbb{Z}_p,
$$ 
which gives us that, for all prime number $p>\mathcal{P}_2$, we have $pH_{{\mathbf L}^{(k)}p}\in\mathbb{Z}_p^{\times}$. Similarly, we get 
$$
p\left(\sum_{i=1}^{q_1}{\mathbf e}_i^{(k)}H_{{\mathbf e}_i^{(k)}p}-\sum_{i=1}^{q_2}{\mathbf f}_i^{(k)}H_{{\mathbf f}_i^{(k)}p}\right)\in\mathbb{Z}_p.
$$ 
Finally, for all prime number $p>\mathcal{P}_2$, we have 
$$
v_p(\mathcal{Q}(p{\mathbf 1}_k))=\sum_{\ell=1}^{\infty}\Delta_{e,f}\left(\frac{p{\mathbf 1}_k}{p^{\ell}}\right)=\sum_{\ell=1}^{\infty}\Delta_k\left(\frac{p}{p^{\ell}}\right)=\Delta_k(1)+\sum_{\ell=1}^{\infty}\Delta_k\left(\frac{1}{p^{\ell}}\right)=|{\mathbf e}|^{(k)}-|{\mathbf f}\,|^{(k)}\geq 1,
$$
from which we obtain that, for all prime number $p>\mathcal{P}_2$, we have 
\begin{equation}\label{valu2}
p\mathcal{Q}(p{\mathbf 1}_k)\left(\sum_{i=1}^{q_1}{\mathbf e}_i^{(k)}H_{{\mathbf e}_i^{(k)}p}-\sum_{i=1}^{q_2}{\mathbf f}_i^{(k)}H_{{\mathbf f}_i^{(k)}p}\right)\in p\mathbb{Z}_p\quad\textup{and}\quad p\mathcal{Q}(p{\mathbf 1}_k)H_{{\mathbf L}^{(k)}p}\in p\mathbb{Z}_p.
\end{equation}

Applying \eqref{valu1} and \eqref{valu2} to \eqref{nkj}, we obtain that, for all prime number $p>\mathcal{P}_2$, we have $\Phi_{{\mathbf L},p}(p{\mathbf 1}_k)\notin p\mathbb{Z}_p$. 

Congruences \eqref{valu1} and \eqref{valu2} associated with \eqref{nkj1} prove that it suffices to prove that $\sum_{i=1}^{q_1}{\mathbf e}_i^{(k)}H_{{\mathbf e}_i^{(k)}}-\sum_{i=1}^{q_2}{\mathbf f}_i^{(k)}H_{{\mathbf f}_i^{(k)}}\neq 0$ to conclude that, for all prime number $p>\mathcal{P}_2$, we have $\Phi_{p,k}(p{\mathbf 1}_k)\notin p\mathbb{Z}_p$. 

For this purpose, we write ${\mathbf E}$ and ${\mathbf F}$ the respective subsequences of $e^{(k)}$ and $f^{(k)}$ obtained as follows. We remove the zero elements of $e^{(k)}$ and $f^{(k)}$ and, if $e^{(k)}$ and $f^{(k)}$ have an element in common, then we remove it from $e^{(k)}$ and $f^{(k)}$ once only. This latest step is repeated until the obtained sequences are disjoint. The sequence ${\mathbf F}$ can be empty but the hypothesis $|e|^{(k)}>|f|^{(k)}$ ensures that the sequence ${\mathbf E}$ is nonempty. Thus we have
\begin{equation}\label{dapres2}
\sum_{i=1}^{q_1}{\mathbf e}_i^{(k)}H_{{\mathbf e}_i^{(k)}}-\sum_{i=1}^{q_2}{\mathbf f}_i^{(k)}H_{{\mathbf f}_i^{(k)}}=\sum_{c\in{\mathbf E}}cH_c-\sum_{d\in{\mathbf F}}dH_d\quad\textup{and}\quad\Delta_k=\Delta_{{\mathbf E},{\mathbf F}}.
\end{equation}
Particularly, if ${\mathbf F}$ is empty then we have 
$$
\sum_{i=1}^{q_1}{\mathbf e}_i^{(k)}H_{{\mathbf e}_i^{(k)}}-\sum_{i=1}^{q_2}{\mathbf f}_i^{(k)}H_{{\mathbf f}_i^{(k)}}=\sum_{c\in{\mathbf E}}cH_c>0. 
$$
In the sequel of the proof, we assume that ${\mathbf F}$ is nonempty. 

Since ${\mathbf E}$ and ${\mathbf F}$ are two disjoint sequences of positive integers, we can apply Lemma \ref{lemme 16 D} to the sequences ${\mathbf E}$ and ${\mathbf F}$. Using the notations of Lemma \ref{lemme 16 D}, we obtain
\begin{equation}\label{dapres3}
\sum_{c\in{\mathbf E}}cH_c-\sum_{d\in{\mathbf F}}dH_d=\sum_{c\in{\mathbf E}}\sum_{j=1}^c\frac{c}{j}-\sum_{d\in{\mathbf F}}\sum_{j=1}^d\frac{d}{j}=\sum_{i=1}^t\frac{m_i}{\gamma_i}.
\end{equation}
Furthermore, for all $x\in\mathbb{R}$, we have $\Delta_{{\mathbf E},{\mathbf F}}(x)=\Delta_k(x)=\Delta_{e,f}(x{\mathbf 1}_k)\geq 0$ so $\Delta_{{\mathbf E},{\mathbf F}}\geq 0$ on $[\gamma_1,\gamma_t]$ and Lemma \ref{lemme 16 D} leads to $\sum_{i=1}^t\frac{m_i}{\gamma_i}>0$. This inequality associated with \eqref{dapres2} and \eqref{dapres3} proves that $\sum_{i=1}^{q_1}{\mathbf e}_i^{(k)}H_{{\mathbf e}_i^{(k)}}-\sum_{i=1}^{q_2}{\mathbf f}_i^{(k)}H_{{\mathbf f}_i^{(k)}}\neq 0$ and completes the proof of Theorem~\ref{cas e>f}.

\section{A consequence of Theorems \ref{critère} and \ref{critère2}}\label{consequenceZudi}

Almkvist, van Enckevort, van Straten and Zudilin present in \cite{Tables} a list of more than $400$ fourth order differential equations that they call of Calabi--Yau type. In most of the considered equations, they give an explicit formula for the analytic solution $F(z)$ normalized by the condition $F(0)=1$. One of the required conditions so that an equation to be of Calabi--Yau type is that its indicial equation at $z=0$ should have $0$ as its only solution (see \cite{Tables}). In particular, according to Section $4.3$ of \cite{Batyrev}, there is a unique power series without constant term $G(z)\in\mathbb{C}[[z]]$ such that $G(z)+\log(z)F(z)$ is a solution of the differential equation linearly independent of $F(z)$. We can then define the $q$-parameter for the equation (following \cite{Batyrev}) $q(z):=z\exp(G(z)/F(z))$.

In \cite{Tanguy 2}, Krattenthaler and Rivoal observe that $43$ equations from the list \cite{Tables} have for solution $F$ a specialization of a series $F_{e,f}(\mathbf{z})$, where the sequences $e$ and $f$ verify the conditions of Theorem $2$ from \cite{Tanguy 2}. We understand by specialization of $F_{e,f}(\mathbf{z})$ any series obtained by replacing each $z_i$, $1\leq i\leq d$, by $z_i=M_iz^{N_i}$, where $M_i\in\mathbb{Z}\setminus\{0\}$ and $N_i\in\mathbb{N}$, $N_i\geq 1$. According to Theorem $2$ from \cite{Tanguy 2}, we see that the Taylor coefficients of canonical coordinates and mirror-type maps associated with $e$ and $f$ are all integers. It is the same for their specializations, which ensures the integrality of the Taylor coefficients of numerous new univariate mirror-type maps. In particular, we can obtain the integrality of the $q$-parameter for the differential equation. 

We found $100$ additional equations from the list \cite{Tables} which have as solution $F(z)$ a specialization of a series $F_{e,f}(\mathbf{z})$. Among these new cases, $97$ correspond to sequences $e$ and $f$ such that $\Delta_{e,f}\geq 1$ on $\mathcal{D}_{e,f}$ and thus, according to Theorems \ref{critère} and \ref{critère2}, such that the specializations of canonical coordinates and mirror-type maps lie in $z\mathbb{Z}[[z]]$. On the other hand, Cases $84$, $284$ and $338$ correspond to sequences $e$ and $f$ such that there exists $\mathbf{x}\in\mathcal{D}_{e,f}$ such that $\Delta_{e,f}(\mathbf{x})=0$. Thus we know that at least one of the canonical coordinates does not lie in $z\mathbb{Z}[[\mathbf{z}]]$.

All in all, we obtain $143$ equations which are the cases: $1$--$25$, $29$, $3^*$, $4^{**}$, $10^{**}$, $13^{**}$, $\hat{1}$--$\hat{14}$, $30$, $34$--$40$, $43$--$53$, $55$, $56$, $58$--$60$, $62$--$91$, $93$--$99$, $110$--$112$, $116$, $119$, $125$--$128$, $130$, $149$, $180$, $185$, $188$, $190$--$192$, $208$, $209$, $212$, $229$, $232$, $233$, $237$--$241$, $278$, $284$, $288$, $292$, $307$, $330$, $337$, $338$, $340$ and $367$.  

Let us, for example, give the details for Case $30$. The differential operator is
\begin{equation}\label{opérateur}
\mathcal{L}:=\theta^4-2^4z(4\theta+1)(4\theta+3)(8\theta^2+8\theta+3)+2^{12}z^2(4\theta+1)(4\theta+3)(4\theta+5)(4\theta+7),
\end{equation}
where $\theta=z\frac{d}{dz}$. The function $F$ canceled by this operator is
$$
F(z)=\sum_{n=0}^{\infty}z^n\frac{(4n)!}{(n!)^2(2n)!}\sum_{k=0}^n2^{2k}\binom{2(n-k)}{n-k}^2\binom{2k}{k}.
$$
Given $e=((4,4),(2,0),(2,0),(0,2))$ and 
$$
f=((2,2),(1,1),(1,1),(1,0),(1,0),(1,0),(1,0),(0,1),(0,1)),
$$ 
we obtain $|e|=|f|$ and
\begin{align*}
F_{e,f}(z,2^2z)
&=\sum_{k,m\geq 0}\frac{(4k+4m)!}{(2k+2m)!((k+m)!)^2}\cdot\frac{((2k)!)^2}{(k!)^4}\cdot\frac{((2m)!)}{(m!)^2}2^{2m}z^{k+m}\\
&=\sum_{n=0}^{\infty}z^n\sum_{k+m=n}\frac{(4n)!}{(2n)!(n!)^2}2^{2m}\binom{2k}{k}^2\binom{2m}{m}=F(z).
\end{align*}

The solution $F(z)$ is therefore a specialization of $F_{e,f}(z,w)$. We will now prove that $\Delta_{e,f}\geq 1$ on $\mathcal{D}_{e,f}$.

For all $(x,y)\in\mathcal{D}_{e,f}$, we have 
\begin{equation}\label{deltapart}
\Delta_{e,f}(x,y)=\lfloor 4x+4y\rfloor+2\lfloor 2x\rfloor+\lfloor 2y\rfloor-\lfloor 2x+2y\rfloor-2\lfloor x+y\rfloor, 
\end{equation}
because $x$ and $y$ lie in $[0,1[$. According to the definition of $\mathcal{D}_{e,f}$, at least one of the floor functions in \eqref{deltapart} must be greater than or equal to $1$. If $2x+2y<1$, then we have $\Delta_{e,f}(x,y)\geq 1$. Let us assume that $2x+2y\geq 1$. Thus we have 
$$
\lfloor 4x+4y\rfloor\geq 2\lfloor 2x+2y\rfloor\geq 1+\lfloor 2x+2y\rfloor,
$$ 
so that if $x+y<1$, then $\Delta_{e,f}(x,y)\geq 1$. On the other hand, if $x+y\geq 1$, then $\lfloor 2x\rfloor\geq 1$ or $\lfloor 2y\rfloor\geq 1$, and since  
$$
\lfloor 4x+4y\rfloor\geq \lfloor 2x+2y\rfloor+2\lfloor x+y\rfloor, 
$$
we obtain $\Delta_{e,f}(x,y)\geq 1$. Thus, according to Theorem \ref{critère}, we have $q_{e,f,1}(z,4z)\in z\mathbb{Z}[[z]]$ and $q_{e,f,2}(z,4z)\in 4z\mathbb{Z}[[z]]$. We will now prove that the $q$-parameter associated with operator \eqref{opérateur} is equal to $q_{e,f,1}(z,4z)$.
\medskip

Let us write $G(z)$ as the power series without constant term such that $G(z)+\log(z)F(z)$ is canceled by operator \eqref{opérateur}. In order to determine the power series $G(z)$ we use the Frobenius method presented in \cite{Schwarz}. 

For all $r\in\mathbb{C}$, $|r|\leq 1/4$, and all $n,k\in\mathbb{N}$, we set
$$
h_{n,k}(r):=2^{2k}\left(\frac{\Gamma(1+2(n+r-k))}{\Gamma(1+n+r-k)^2}\right)^2\binom{2k}{k}\quad\textup{if}\;r\neq 0,
$$
$$
h_{n,k}(0):=2^{2k}\left(\frac{\Gamma(1+2(n-k))}{\Gamma(1+n-k)^2}\right)^2\binom{2k}{k}\quad\textup{if}\;k\leq n
$$ 
and $h_{n,k}(0)=0$ if $k\geq n+1$.

The function $\Gamma$ is meromorphic on $\mathbb{C}\setminus\mathbb{Z}_{\leq 0}$ and has a simple pole in all nonpositive integer. Hence, the functions $h_{n,k}$ are analytic on $\{r\in\mathbb{C}\,:\,|r|\leq 1/4\}$ and, when $k\geq n+1$, the functions $h_{n,k}$ have a zero of order $2$ in $r=0$. 

For all $n\in\mathbb{N}$ and $r\in\mathbb{C}$, $|r|\leq 1/4$, we set
$$
c_n(r):=\frac{\Gamma(1+4n+4r)}{\Gamma(1+n+r)^2\Gamma(1+2n+2r)}\sum_{k=0}^{\infty}h_{n,k}(r).
$$
\medskip

Let us prove that the series $c_n(r)$ is well defined. We recall Euler's reflection formula: for all $z\in\mathbb{C}\setminus\mathbb{Z}$, we have
\begin{equation}\label{formule des compléments}
\Gamma(1-z)\Gamma(z)=\frac{\pi}{\sin(\pi z)}.
\end{equation}
We will also use the property that, for all $\alpha\in]0,\pi[$, when $|\arg(z)|<\pi-\alpha$ and $z\rightarrow\infty$, we have $\Gamma(z)\sim e^{-z}z^{z-1/2}\sqrt{2\pi}$. Particularly, there exists a constant $\mathcal{K}>0$ such that, if $\Re(z)>0$ and $|z|>\mathcal{K}$, then
\begin{equation}\label{Asympt}
\frac{1}{2}\left|e^{-z}z^{z-1/2}\sqrt{2\pi}\right|\leq\left|\Gamma(z)\right|\leq 2 \left|e^{-z}z^{z-1/2}\sqrt{2\pi}\right|.
\end{equation}
Let us fix $n\in\mathbb{N}$. There exists a constant $\mathcal{K}'> n+1$ such that, for all $k\geq\mathcal{K}'$ and $r\in\mathbb{C}$, $|r|\leq 1/4$, we have $|k-n-r|\geq \mathcal{K}$. Thereby, for all $k\in\mathbb{N}$, $k\geq\mathcal{K}'$ and all $r\in\mathbb{C}\setminus\{0\}$, $r\leq 1/4$, following \eqref{formule des compléments}, we get 
\begin{equation}\label{grim1}
\Gamma(1+2(n+r-k))=\frac{\pi}{\sin(2\pi(k-n-r))\Gamma(2(k-n-r))}
\end{equation}
and
\begin{equation}\label{grim2}
\Gamma(1+n+r-k)=\frac{\pi}{\sin(\pi(k-n-r))\Gamma((k-n-r))}.
\end{equation}
Applying \eqref{Asympt} to \eqref{grim1} and \eqref{grim2}, we respectively obtain
$$
|\Gamma(1+2(n+r-k))|
\leq \left|\frac{\sqrt{2\pi} e^{2(k-n-r)}}{\sin(2\pi r)(2(k-n-r))^{2(k-n-r)-1/2}}\right|
$$
and
$$
|\Gamma(1+n+r-k)|\geq\left|\frac{\sqrt{\pi} e^{k-n-r}}{2\sqrt{2}\sin(\pi r)(k-n-r)^{k-n-r-1/2}}\right|.
$$
Thus we have
\begin{equation}\label{eqGamma}
\left|\frac{\Gamma(1+2(n+r-k))}{\Gamma(1+n+r-k)^2}\right|^2\leq \left|\frac{2^5\sin^2(\pi r)}{\pi\cos^2(\pi r)2^{4(k-n-r)}(k-n-r)}\right|.
\end{equation}
Furthermore, according to \eqref{Asympt}, for all $k\in\mathbb{N}$, $k\geq\mathcal{K}'$, we have
\begin{equation}\label{eqGamma2}
\binom{2k}{k}=\frac{2\Gamma(2k)}{k\Gamma(k)^2}\leq 8\frac{2^{2k}}{\sqrt{\pi k}}.
\end{equation}
Therefore, following \eqref{eqGamma} and \eqref{eqGamma2}, for all $r\in\mathbb{C}$, $|r|\leq 1/4$, and all $k\in\mathbb{N}$, $k\geq \mathcal{K}'$, we have
\begin{equation}\label{equivalentsim}
|h_{n,k}(r)|\leq 2^8\left|\frac{16^{n+r}\sin^2(\pi r)}{\cos^2(\pi r)\pi^{\frac{3}{2}}(k-n-r)\sqrt{k}}\right|.
\end{equation}
There exists a constant $C>0$ such that, for all $r\in\mathbb{C}$, $|r|\leq 1/4$, we have 
$$
|\sin^2(\pi r)/\cos^2(\pi r)|\leq C,\quad \left|16^{n+r}\right|\leq 2\cdot16^n\quad\textup{and}\quad |k-n-r|\geq k-n-1.
$$ 
Thus, for all $r\in\mathbb{C}$, $|r|\leq 1/4$, and all $k\in\mathbb{N}$, $k\geq\mathcal{K}'$, we have
$$
|h_{n,k}(r)|\leq 2^9\frac{C16^{n}}{\pi^{\frac{3}{2}}(k-n-1)\sqrt{k}}.
$$
Thus the series $c_n(r)$ is uniformly convergent on $\{r\in\mathbb{C}\,:\,|r|\leq 1/4\}$ and defines an analytic function. 
\medskip

Let us write $P_2(X):=X^4$, $P_1(X):=-2^4(4X+1)(4X+3)(8X^2+8X+3)$ and
$$
P_0(X):=2^{12}(4X+1)(4X+3)(4X+5)(4X+7),
$$
so that operator \eqref{opérateur} is written like $\mathcal{L}=P_2(\theta)+zP_1(\theta)+z^2P_0(\theta)$. 
\medskip

First, we prove that, for all $r\in\mathbb{C}$, $|r|\leq 1/4$, and all $n\in\mathbb{N}$, we have 
\begin{equation}\label{larec2965}
P_2(n+r+2)c_{n+2}(r)+P_1(n+r+1)c_{n+1}(r)+P_0(n+r)c_n(r)=0.
\end{equation}

If $r=0$, then, according to \cite{Tables}, $F(z)=\sum_{n=0}^{\infty}c_n(0)z^n$ is canceled by $\mathcal{L}$ thus we have \eqref{larec2965}. Let us assume that $r\neq 0$. In order to prove \eqref{larec2965}, we apply the \textit{Zeilberger} procedure from Maple $12$ to the sequence 
$$
(b_k(n,r))_{k\geq 0}:=\left(\frac{\Gamma(1+4n+4r)}{\Gamma(1+n+r)^2\Gamma(1+2n+2r)}2^{2k}\left(\frac{\Gamma(1+2(n+r-k))}{\Gamma(1+n+r-k)^2}\right)^2\binom{2k}{k}\right)_{k\geq 0}.
$$ 
For a hypergeometric term $T(n,k)$ this procedure constructs a sequence $(d_k)_{k\geq 0}$ and an operator $\mathcal{R}=P_v(n)\delta^v+\cdots+P_1(n)\delta+P_0(n)$ such that $\mathcal{R}T(n,k)=d_{k+1}-d_k$, where $P_i(X)\in\mathbb{C}[[X]]$ and $\delta$ is the shift operator $\delta T(n,k)=T(n+1,k)$. In our case, we obtain an explicit sequence $(d_k)_{k\geq 0}$ such that, for all $n,k\in\mathbb{N}$, we have
\begin{equation}\label{transitZeilberger}
P_2(n+r+2)b_k(n+2,r)+P_1(n+r+1)b_k(n+1,r)+P_0(n+r)b_k(n,r)=d(k+1)-d(k),
\end{equation}
with $d(0)=0$ and (quick calculation)
$$
d(k)=O\left(\frac{k4^k\Gamma(2(n+r-k)+1)^2}{\Gamma(n+r-k+1)^4}\binom{2k}{k}\right)=O\left(\frac{1}{\sqrt{k}}\right),
$$
when $k\rightarrow+\infty$. Summing identity \eqref{transitZeilberger} for $k$ from $0$ to $+\infty$ and using that $d(0)=0$ and $d(k)\underset{k\rightarrow+\infty}{\rightarrow}0$, we get \eqref{larec2965}.
\medskip

Therefore, writing $\tilde{F}(z,r):=\sum_{n=0}^{\infty}c_n(r)z^{n+r}$, we have
\begin{equation}\label{eqdiffinter}
\mathcal{L}\tilde{F}(z,r)=P_2(r)c_0(r)z^r+\big(P_2(r+1)c_1(r)+P_1(r)c_0(r)\big)z^{1+r}.
\end{equation}
\medskip

Let us prove that $\left.\frac{\partial}{\partial r}\mathcal{L}\tilde{F}(z,r)\right|_{r=0}=0$. The series $c_n(r)$ is analytic on $\{r\in\mathbb{C}\,:\,|r|\leq 1/4\}$ and its derivative is obtained by differentiating term by term. When $k\geq n+1$, the functions $h_{n,k}(r)$ are analytic in a neighborhood of $0$ and have a zero of order $2$ in $0$. For all $k\geq n+1$, we obtain 
$$
\left.\frac{\partial}{\partial r}\left(\frac{\Gamma(1+4n+4r)}{\Gamma(1+n+r)^2\Gamma(1+2n+2r)}\left(\frac{\Gamma(1+2(n+r-k))}{\Gamma(1+n+r-k)^2}\right)^2\right)\right|_{r=0}=0.
$$

On the other hand, if $m\in\mathbb{N}$, $m\geq 1$, then we have $\Gamma'(m)=\Gamma(m)(H_{m-1}-\gamma)$, where $\gamma$ is Euler's constant. Hence, for all $k\in\{0,\dots,n\}$, we obtain
\begin{multline*}
\left.\frac{\partial}{\partial r}\left(\frac{\Gamma(1+4n+4r)}{\Gamma(1+n+r)^2\Gamma(1+2n+2r)}\left(\frac{\Gamma(1+2(n+r-k))}{\Gamma(1+n+r-k)^2}\right)^2\right)\right|_{r=0}\\
=\frac{\Gamma(1+4n)}{\Gamma(1+n)^2\Gamma(1+2n)}\left(\frac{\Gamma(1+2(n-k))}{\Gamma(1+n-k)^2}\right)^2\\
\times\left(4H_{4n}-2H_n-2H_{2n}+4H_{2(n-k)}-4H_{n-k}\right).
\end{multline*}

Thus, for all $n\in\mathbb{N}$, we have
\begin{multline}\label{formule c_n'(0)}
c_n'(0)=\frac{\Gamma(1+4n)}{\Gamma(1+n)^2\Gamma(1+2n)}\sum_{k=0}^n\left(\frac{\Gamma(1+2(n-k))}{\Gamma(1+n-k)^2}\right)^2\\
\times\left(4H_{4n}-2H_n-2H_{2n}+4H_{2(n-k)}-4H_{n-k}\right)
\end{multline}
Particularly, we obtain 
$$
\left.\frac{d}{dr}\left(P_2(r)c_0(r)z^r\right)\right|_{r=0}=\left.\frac{d}{dr}\left(r^4c_0(r)z^r\right)\right|_{r=0}=0
$$
and, a simple calculation \textit{via} Maple $12$ leads to
$$
\left.\frac{d}{dr}\left((P_2(r+1)c_1(r)+P_1(r)c_0(r))z^{1+r}\right)\right|_{r=0}=0.
$$
Therefore, we have $\left.\frac{\partial}{\partial r}\left(\mathcal{L}\tilde{F}(z,r)\right)\right|_{r=0}=0$.
\medskip

Since the sequence $(c_n(r))_{n\geq 0}$ satisfies recurrence relation \eqref{larec2965}, we can follow Section $16.2$ from \cite{Schwarz} and we obtain that there exists $R>0$ such that, for all $r\in\mathbb{C}$, $|r|\leq 1/4$, the power series $\tilde{F}(z,r)$ in $z$ has a radius of convergence at least equal to $R$. Furthermore, if $|z|<R$, then $F(z,r)$ is derivable with respect to $r$ and 
\begin{multline*}
\left.\frac{\partial}{\partial r}\tilde{F}(z,r)\right|_{r=0}=\log(z)F(z)\\
+\sum_{n=0}^{\infty}z^n\frac{(4n)!}{(n!)^2(2n)!}\sum_{k=0}^n2^{2k}\binom{2(n-k)}{n-k}^2\binom{2k}{k}(4H_{4n}-2H_n-2H_{2n}+4H_{2(n-k)}-4H_{n-k}).
\end{multline*}

As the operator $\frac{\partial}{\partial r}$ commutes with $\mathcal{L}$, we obtain 
$$
\left.\mathcal{L}\frac{\partial}{\partial r}\tilde{F}(z,r)\right|_{r=0}=\left.\frac{\partial}{\partial r}(\mathcal{L}\tilde{F}(z,r))\right|_{r=0}=0.
$$
Thereby, $\left.\frac{\partial}{\partial r}\tilde{F}(z,r)\right|_{r=0}=G_{e,f,1}(z,4z)+\log(z)F(z)$ is canceled by $\mathcal{L}$ and, according to the uniqueness of $G(z)$, we have $G(z)=G_{e,f,1}(z,4z)$. The $q$-parameter associated with operator \eqref{opérateur} is thus 
$$
q(z)=z\exp\big(G_{e,f,1}(z,4z)/F_{e,f}(z,4z)\big)=q_{e,f,1}(z,4z)\in z\mathbb{Z}[[z]].
$$
So in Case $30$, the $q$-parameter is a specialization of a canonical coordinate. We did not verify in detail the $143$ cases cited above but it seems that the presented method for Case $30$ can prove in many cases that the $q$-parameter associated with the operator is a specialization of a canonical coordinate and thus, when $\Delta_{e,f}\geq 1$ on $\mathcal{D}_{e,f}$, all its Taylor coefficients are integers. It would be interesting to have a more general method to prove this.

\address{E. Delaygue, Institut Fourier, CNRS et Université Grenoble 1, 100 rue des Maths, BP 74, 38402 Saint-Martin-d'Hères cedex, France. Email : Eric.Delaygue@ujf-grenoble.fr}

\end{document}